\providecommand{\abs}[1]{\left\lvert#1\right\rvert}
\newtheorem{theorem}{Theorem}[section]
\newtheorem{proposition}[theorem]{Proposition}
\newtheorem{lemma}[theorem]{Lemma}
\theoremstyle{definition}
\newtheorem{definition}[theorem]{Definition}
\newtheorem{example}[theorem]{Example}
\theoremstyle{remark}
\providecommand{\Nn}{\mathbb{N}}
\providecommand{\Zz}{\mathbb{Z}}
\providecommand{\OP}{\operatorname}
\begin{document}
\title{Topological Mixing Properties of Rank-One Subshifts}
\author{Su Gao and Caleb Ziegler}
\address{Department of Mathematics, University of North Texas, 1155 Union Circle \#311430, Denton, TX 76203, USA}
\email{sgao@unt.edu}
\email{caleb.ziegler@unt.edu}

\date{\today}
\subjclass[2010]{Primary 37B10, 37B20}
\keywords{rank-one subshift, mixing, weakly mixing, odometer, factor, maximal equicontinuous factor, isomorphism}
\thanks{The first author acknowledges the US NSF grants DMS-1201290 and DMS-1800323 for the support of his research. Most results in this paper appeared as a part of the second author's PhD dissertation submitted to the University of North Texas in 2018.}

\begin{abstract}
We study topological mixing properties and the maximal equicontinuous factor of rank-one subshifts as topological dynamical systems. We show that the maximal equicontinuous factor of a rank-one subshift is finite. We also determine all the finite factors of a rank-one shift with a condition involving the cutting and spacer parameters. For rank-one subshifts with bounded spacer parameter we completely characterize weak mixing and mixing. For rank-one subshifts with unbounded spacer parameter we prove some sufficient conditions for weak mixing and mixing. We also construct some examples showing that the characterizations for the bounded spacer parameter case do not generalize to the unbounded spacer parameter case.
\end{abstract}
\maketitle \thispagestyle{empty}


\section{Introduction}
Rank-one transformations have been extensively studied since their introduction by Chacon \cite{Chacon} in 1965. As a result, Ferenczi wrote a survey \cite{Ferenczi} summarizing many results and systematically studying many of the different definitions of rank-one transformations that appeared in the literature. Many measure theoretic rank-one transformations could be shown to satisfy each of the different definitions. However, the constructive symbolic definition seemed to behave differently from other definitions, particularly with respect to odometers. This distinction led to further study of the constructive symbolic definition, such as by \cite{AdamsFerencziPetersen}, \cite{Danilenko1}, and \cite{Danilenko2}.

Because the constructive symbolic definition works with a shift space, it was natural to study systems coming from the constructive symbolic definition in the setting of topological dynamics. This led to the definition of rank-one subshifts, which was first studied by the first author and Hill in \cite{GaoHill}, where they gave a characterization for the topological isomorphism relation of rank-one subshifts based on the cutting and spacer parameters. Many rank-one subshifts carry a uniquely ergodic measure, and therefore can be also viewed as a rank-one transformation. To avoid confusion, whenever we refer to a rank-one subshift we will view it as a topological dynamical system unless we specify otherwise. This paper looks at many of the main areas of research for rank-one transformations in the measure-theoretic context and attempts to transfer them over to the case of rank-one subshifts in the topological context.

One major area of study for rank-one transformations concerns mixing properties. Indeed, the motivation for the original rank-one transformation constructed by Chacon \cite{Chacon} was to build a measure preserving transformation that is weakly mixing but not mixing. Other papers that looked at mixing properties of rank-one transformations include \cite{Adams}, \cite{AdamsFriedmanSilva}, \cite{BaylessYancey}, and \cite{Ornstein}. Of particular interest are the papers which attempted to classify mixing properties; the first author and Hill \cite{HillGao} classified when a rank-one transformation is weakly mixing in the canonically bounded case, and Creutz and Silva \cite{CrSil1} \cite{CrSil2} classified when a rank-one transformation is mixing based on the ergodicity of the sequence of spacer parameters.

In this paper we study topological mixing properties for rank-one subshifts. For topological weak mixing, we have the following complete classification for rank-one subshifts with bounded spacer parameter.

\begin{theorem}\label{mainweaklymixingtheorem}
Let $(X,T)$ be a rank-one subshift given by the cutting parameter $(q_n)$ and the spacer parameter $(a_{n,i})$. Let $(v_n)$ be the corresponding rank-one generating sequence. Suppose there is $B>0$ such that $a_{n,i}<B$ for all $n\in\Nn$ and $1\leq i<q_n$. Then $(X,T)$ is topologically weakly mixing iff for any interger $p>1$ and $n\in \Nn$, there are $m\geq n$ and $1\leq i< q_m$ such that $p\!\!\not|(\abs{v_n}+a_{m,i})$.
\end{theorem}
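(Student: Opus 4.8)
The plan is to reduce topological weak mixing to the nonexistence of finite rotation factors. Since a rank-one subshift $(X,T)$ is minimal and, by the results of this paper, has a \emph{finite} maximal equicontinuous factor, $(X,T)$ is topologically weakly mixing if and only if it admits no factor map onto a nontrivial finite cyclic rotation $(\Zz/p\Zz, +1)$ with $p>1$ (every finite cyclic rotation factor passes through the maximal equicontinuous factor, and conversely such a factor, being finite and minimal, is a cyclic rotation). So the theorem reduces to the statement: there is a factor map $X\to(\Zz/p\Zz,+1)$ with $p>1$ if and only if there exist $p>1$ and $n\in\Nn$ with $p\mid(\abs{v_n}+a_{m,i})$ for all $m\geq n$ and $1\leq i<q_m$. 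I prove the two directions separately.

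For the ``divisibility $\Rightarrow$ not weakly mixing'' direction, assume $p>1$ and $n$ satisfy $p\mid(\abs{v_n}+a_{m,i})$ for all $m\geq n$, $1\leq i<q_m$. I would use the canonical level-$n$ decomposition of points of $X$ (from \cite{GaoHill}): each $x\in X$ is a bi-infinite concatenation of copies of $v_n$ separated by spacer blocks $1^{g}$, where every gap $g$ is one of the values $a_{m,i}$, $m\geq n$. Consecutive occurrence positions of $v_n$ in $x$ therefore differ by $\abs{v_n}+a_{m,i}\equiv 0\pmod{p}$, so they all lie in a single residue class $\psi(x)\in\Zz/p\Zz$. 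Boundedness of the spacer parameter by $B$ makes consecutive occurrences at most $\abs{v_n}+B$ apart and makes the decomposition locally determined, so $\psi$ is continuous; it satisfies $\psi(Tx)=\psi(x)-1$ and is onto. Hence $x\mapsto-\psi(x)$ is a factor map onto $(\Zz/p\Zz,+1)$, so $(X,T)$ is not weakly mixing.

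For the converse, let $\pi\colon X\to(\Zz/p\Zz,+1)$ be a factor map with $p>1$. By continuity there is $L$ with $\pi(x)$ depending only on $x|_{[-L,L]}$, and by equivariance $\pi(T^{s}x)=\pi(x)+s$ for all $s\in\Zz$. Fix any $n$ with $\abs{v_n}\geq 2L+1$ and any $x\in X$ containing an occurrence of $v_{n+1}$ at some position $k$; inside it the $j$-th copy of $v_n$ begins at position $k+p_j$ with $p_j=(j-1)\abs{v_n}+\sum_{l=1}^{j-1}a_{n,l}$. For $L\leq r\leq\abs{v_n}-1-L$ the value $\pi(T^{k+p_j+r}x)$ depends only on the window $v_n|_{[r-L,r+L]}$, hence is independent of $j$; but equivariance gives $\pi(T^{k+p_j+r}x)=\pi(T^{k+r}x)+p_j$. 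Comparing these yields $p_j\equiv 0\pmod{p}$ for all $j$, so $p\mid(p_{j+1}-p_j)=\abs{v_n}+a_{n,j}$ for $1\leq j<q_n$. As $\abs{v_m}$ is nondecreasing, this holds for every $m\geq N:=\min\{n:\abs{v_n}\geq 2L+1\}$. Finally, from $a_{m,i}\equiv-\abs{v_m}\pmod{p}$ for $m\geq N$ together with $\abs{v_{m+1}}=q_m\abs{v_m}+\sum_{i=1}^{q_m-1}a_{m,i}$ one computes $\abs{v_{m+1}}\equiv\abs{v_m}\pmod{p}$, hence $\abs{v_m}\equiv\abs{v_N}\pmod{p}$ for all $m\geq N$; therefore $p\mid(\abs{v_N}+a_{m,i})$ for all $m\geq N$ and $1\leq i<q_m$, which is exactly the required witness $(p,N)$.

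The main obstacle is the continuity (indeed the well-definedness) of $\psi$ in the first direction: one must know that the level-$n$ block decomposition of any $x\in X$ is genuinely local, i.e. that occurrences of $v_n$ read off from a finite window are the canonical ones. This can fail if $v_n$ has nontrivial self-overlaps, e.g.\ when $(X,T)$ is a single periodic orbit, so I would first dispose of the case where $X$ is finite (there the implication is trivial, $X$ itself being a nontrivial finite rotation) and, for infinite $X$, invoke recognizability of $v_n$ for large $n$, using the observation that the divisibility hypothesis automatically propagates to all larger levels so that $n$ may be taken as large as needed. The second direction, by contrast, is essentially formal once the continuity radius $L$ is in hand.
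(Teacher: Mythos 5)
Your argument is correct, but the crucial direction (non-divisibility implies weak mixing) follows a genuinely different route from the paper's. The paper proves this direction combinatorially and self-containedly: it characterizes weak mixing by the existence, for every $n$ and every $h$, of two $n$-blocks whose lengths differ by exactly $h$ (Proposition~\ref{weakmixingproperty}), and then, in the proof of Theorem~\ref{boundedmef}, realizes every such difference by an iterated block surgery (Lemma~\ref{mainconstruction}, Proposition~\ref{updownachievable}) combined with a Euclidean-algorithm step converting multiples of the ``up-down gcd'' of the spacer values into multiples of $p_{\max}$. You instead reduce weak mixing to the absence of nontrivial finite rotation factors by invoking (a) the general theorem that a minimal system carrying an invariant measure is weakly mixing iff its maximal equicontinuous factor is trivial, and (b) the paper's separate result that the maximal equicontinuous factor of a rank-one subshift is finite. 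The paper explicitly notes after Theorem~\ref{weaklymixingtheorem} that route (a) is available, and (b) is proved independently in \S\ref{MEF}, so there is no circularity; but be aware that (b) is itself the technical heart of the paper (partition proximality plus the very same $n$-block length-difference machinery), and it is genuinely needed: ``no nontrivial finite factor'' alone does not force a trivial maximal equicontinuous factor for general minimal subshifts (Sturmian systems have an irrational circle rotation as maximal equicontinuous factor). So your proof is shorter only because it imports Auslander's theorem and relocates, rather than avoids, the combinatorial work. The remaining pieces match the paper closely: your construction of the $\Zz/p\Zz$ factor from the divisibility hypothesis is the paper's Proposition~\ref{Zzncondition} (the continuity and recognizability worry you raise is exactly what the clopen, locally determined sets $E_{n,k}$ from \cite{GaoHill} are for), and your converse derivation of the divisibility condition from a factor map via a continuity radius $L$ --- including the step $\abs{v_{m+1}}\equiv\abs{v_m} \ (\mathrm{mod}\ p)$ needed to replace the varying level $m$ by a single witness level $N$ --- is a correct, more hands-on variant of the paper's argument.
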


This mirrors the results of \cite{HillGao} in giving an explicit characterization of weak mixing in terms of the spacer parameters. In general, topological weak mixing neither implies, nor is implied by, weak mixing in the measure-theoretic sense; but in the case of canonically bounded rank-one subshifts, it turns out that they are topologically weakly mixing exactly when they are weakly mixing as a rank-one transformation. 

We also study topological weak mixing on rank-one subshifts with unbounded spacer parameter. We prove the following sufficient condition for this case.

\begin{theorem}\label{mainmeasureoneweakly}
Let $(X,T)$ be a rank-one subshift given by the cutting parameter $(q_n)$ and the spacer parameter $(a_{n,i})$. If the set $\{a_{n, i}:n\in\Nn, 1\leq i<q_n\}$ is a subset of $\Nn$ with density greater than $\frac{1}{2}$, then $(X,T)$ is topologically weakly mixing. 
\end{theorem}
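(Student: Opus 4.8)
The plan is to verify topological weak mixing directly, that is, to show that $(X\times X, T\times T)$ is topologically transitive, rather than to argue through continuous eigenfunctions. (The density hypothesis forces the spacer parameter to be unbounded, so arbitrarily long runs $1^{a_{m,i}}$ occur in $X$, the constant point $\cdots111\cdots$ lies in $X$, and hence $X$ is not minimal; for non-minimal systems ``no nonconstant continuous eigenfunctions'' does not by itself yield weak mixing, so a combinatorial argument is cleaner.) Concretely, I would take four words $u_1,u_2,w_1,w_2$ in the language of $X$, pad them to a common length $\ell$, and aim to produce $k\in\Zz$ and points $x_1,x_2\in X$ with $x_j|_{[0,\ell)}=u_j$ and $x_j|_{[k,k+\ell)}=w_j$ for $j=1,2$; this suffices for transitivity of $X\times X$.

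First I would use that the language of $X$ is $\bigcup_n\{\text{factors of }v_n\}$ and that $X$ is the orbit closure of a single point, so every word in the language is realized in some point of $X$. Pick $n$ with all four words occurring in $v_n$, say $u_j=v_n|_{[p_j,p_j+\ell)}$ and $w_j=v_n|_{[p_j',p_j'+\ell)}$. Now let $D\subseteq\Zz$ be the set of $d$ for which some point of $X$ carries $v_n$ at two positions a distance $d$ apart; then $0\in D=-D$. The key combinatorial point is that $D\supseteq\abs{v_n}+S_n$, where $S_n=\{a_{m,i}:m\ge n,\ 1\le i<q_m\}$: for every $m\ge n$ the word $v_m$ both begins and ends with $v_n$, so $v_n\,1^{a_{m,i}}\,v_n$ is a factor of $v_{m+1}$, hence a word of $X$, and it displays $v_n$ at two positions $\abs{v_n}+a_{m,i}$ apart. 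Since $S_n$ differs from $\{a_{n,i}:n\in\Nn,\ 1\le i<q_n\}$ only by the finitely many values coming from levels $<n$, the set $E:=\abs{v_n}+S_n\subseteq\Nn$ has lower density strictly above $\frac12$.

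The heart of the proof is then the elementary fact that any $E\subseteq\Nn$ with lower density exceeding $\frac12$ satisfies $E-E=\Zz$: for fixed $c\ge0$ and all large $M$, both $E\cap[0,M]$ and $(E+c)\cap[0,M]$ have more than $\frac12(M-c)$ elements while lying in a set of $M+1$ elements, so they must intersect, giving $c\in E-E$; symmetry handles $c<0$, and $0\in E-E$. Hence $D-D\supseteq E-E=\Zz$. To finish, set $c_j=p_j'-p_j$; since $c_2-c_1\in\Zz=D-D$ there are $d_1,d_2\in D$ with $d_1-d_2=c_2-c_1$, so $k:=d_1+c_1=d_2+c_2$ is a well-defined integer. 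For each $j$ choose a point of $X$ carrying $v_n$ at two positions a distance $d_j$ apart and translate it so that the left copy of $v_n$ starts at position $-p_j$; that point then has $u_j$ at position $0$ and $w_j$ at position $d_j+c_j=k$, as required, and weak mixing follows.

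The hard part will be bookkeeping rather than any deep idea: one must check carefully that $v_m$ begins and ends with $v_n$ for $m\ge n$ and that $v_n\,1^{a_{m,i}}\,v_n$ genuinely sits inside $v_{m+1}$, and then track the offsets $p_j,p_j'$ through the translations so that the constructed points really realize $u_j$ at $0$ and $w_j$ at $k$ simultaneously for $j=1,2$. The density lemma itself is short; its one delicate feature is that the inequality must remain strict, which is exactly what ``density greater than $\frac12$'' supplies and what would fail if the spacer set were contained in a single residue class.
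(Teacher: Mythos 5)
Your argument is correct, but it takes a genuinely different and in fact more economical route than the paper's. The paper first reduces weak mixing to the statement that for every $n$ and every $h$ there exist $n$-blocks whose lengths differ by exactly $h$ (Proposition~\ref{weakmixingproperty}(iii)), then extracts from the density hypothesis only the existence of infinitely many pairs of consecutive values $k,k+1$ in the spacer set, and finally amplifies a length difference of $1$ to an arbitrary difference $h$ by the iterated block surgery of Lemma~\ref{mainconstruction} and Propositions~\ref{onegap}--\ref{onegapdifferentlevels}. You instead apply the pigeonhole directly to the spacer set: density $>\frac12$ gives, for every $h$ and every $n$, two spacer values $a,a'$ occurring at levels $\geq n$ with $a-a'=h$, and the words $v_n1^{a}$ and $v_n1^{a'}$ (realized inside $v_m1^{a}v_m$, hence inside $v_{m+1}$) are already $n$-blocks with length difference $h$. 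This bypasses the surgery machinery entirely; you could stop there and invoke Proposition~\ref{weakmixingproperty}(iii) rather than re-derive the reduction to cylinder sets, though your direct verification of transitivity of $X\times X$ is also sound and correctly avoids the eigenfunction route, which is unavailable here since the density hypothesis forces unbounded spacers and hence non-minimality. Two points to tighten. First, the counting in your difference-set lemma is stated too weakly: two subsets of $[0,M]$ each of cardinality greater than $\frac12(M-c)$ need not intersect, since $2\cdot\frac12(M-c)=M-c\leq M+1$; you must use that the density is at least $\frac12+\epsilon$ for some $\epsilon>0$ and take $M$ large compared with $c/\epsilon$, so that the two cardinalities sum to more than $M+1$ --- this is exactly the use of strictness you anticipated, but it needs to appear in the inequality itself. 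Second, when the four open sets contain cylinders based at arbitrary positions rather than at $0$, the two required relative offsets become $k+\delta_1$ and $k+\delta_2$ for fixed integers $\delta_1,\delta_2$ depending on those positions; this is harmless, as the constants are absorbed into your $c_j$, but it is part of the bookkeeping you promised and should be written out.
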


We also study topological mixing for rank-one subshifts and obtain the following results. 

\begin{theorem}\label{mainmixingneverholds}
Let $(X,T)$ be a rank-one subshift given by the cutting parameter $(q_n)$ and the spacer parameter $(a_{n,i})$. Suppose there is $B>0$ such that $a_{n,i}<B$ for all $n\in\Nn$ and $1\leq i<q_n$. Then $(X,T)$ is not topologically mixing.
\end{theorem}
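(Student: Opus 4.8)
The plan is to argue by contradiction. Assume $(X,T)$ is topologically mixing. Since a topologically mixing system is topologically weakly mixing, Theorem~\ref{mainweaklymixingtheorem} applies, and in particular $(X,T)$ is neither finite nor an odometer, so we are in the ``non-degenerate'' regime where the standard recognizability analysis of rank-one subshifts is available: there is a function $n\mapsto m(n)$ such that every occurrence of $v_n$ inside $v_{m(n)}$, and hence inside any $x\in X$, is expected. I would first reformulate topological mixing symbolically: $(X,T)$ is topologically mixing iff for all $u,w\in L(X)$ there is $N$ so that for every $\ell\ge N$ there is $z$ with $\abs{z}=\ell$ and $uzw\in L(X)$. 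The goal is to contradict this by exhibiting words $u,w\in L(X)$ and infinitely many lengths $\ell$ for which no such $z$ exists.

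The bounded spacer hypothesis enters as rigidity of the block structure. Because every $v_k$ begins and ends with $v_{k-1}$, two consecutive expected occurrences of $v_n$ in any $x\in X$ sit at distance $\abs{v_n}+a$ where $a=a_{k,i}$ for a single level $k\ge n$ and $0\le a<B$. Consequently the $v_n$-blocks cover all but a proportion at most $B/\abs{v_n}$ of every long window of $x$; equivalently, $v_m$ is a concatenation of $\prod_{n\le k<m}q_k$ copies of $v_n$ together with spacers of total length less than $B\prod_{n\le k<m}q_k$, so the spacer content of $v_m$ is a vanishing fraction of $\abs{v_m}$ as $m\to\infty$. I would use this to pin the relative positions of $v_n$-blocks inside a much larger $v_m$-block up to an additive error small compared with $\abs{v_m}$.

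The heart of the argument is a two-scale combinatorial computation, and this is where I expect the difficulty to lie. A naive attempt with $u=w=v_n$ for a single fixed $n$ does not suffice: over many blocks the accumulated spacer slop is linear in the number of blocks, so the set of realizable distances between two occurrences of $v_n$, though it does have gaps, is eventually cofinite. One must instead let $u$ and $w$ encode configurations at a level $n$ while forcing the connecting word $z$ to have length comparable to a much larger level $m$, and exploit that the intervening spacer length is $o(\abs{v_m})$ to conclude that the realizable values of $\abs{uzw}$ miss an infinite set of lengths $\ell=\ell(n,m)$; intuitively, completing such an $\ell$ would demand more spacer room between two pinned $v_n$-blocks than any length-$\ell$ stretch of the hierarchy can supply. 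An equivalent route is to take a single asymmetric pair: a $v_n$-block decorated so as to be forced to the left edge of a $v_m$-block, and one forced to the right edge, so that rigidity makes the separating distance essentially a whole number of $v_m$-lengths and the bounded spacers keep it away from infinitely many integers.

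The remaining ingredients are routine once the above is set up: recognizability of $v_n$ for large $n$, uniqueness of the $v_n$-parsing of points of $x\in X$, the block-tiling estimate, and the passage between expected occurrences in $v_m$ and occurrences in an arbitrary $x$. One must also dispose of the degenerate cases — eventually-zero spacer parameter (where $X$ is a single periodic orbit), periodic $v_n$, and non-unique parsings — each of which is easily seen not to be topologically mixing, or is handled by the rank-one machinery established earlier. I expect the genuinely delicate point to be the precise choice of the decorated words $u,w$ together with the explicit infinite family of non-completable lengths, and the uniform verification that completion fails at all those lengths.
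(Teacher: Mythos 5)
Your overall strategy --- assume mixing, reformulate it as ``every sufficiently large $\ell$ is realized as a distance between (suitably pinned) occurrences of a fixed word,'' and then exhibit infinitely many non-realizable lengths near the scales $\abs{v_m}$ --- is the right one, and it is the paper's. But the proposal has a genuine gap: the step you yourself identify as ``the heart of the argument'' is never carried out, only described as something you expect to be delicate, and the quantitative tool you propose for it is too weak. Knowing only that the spacer content of $v_m$ is $o(\abs{v_m})$ does not produce any forbidden lengths: the candidate lengths with $r$ expected occurrences of $v_n$ lie in intervals of the shape $[r\abs{v_n},\,r(\abs{v_n}+B)]$, which overlap and cover a tail of $\Nn$ once $r$ is large, so a density or ``vanishing fraction'' estimate alone cannot rule anything out. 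What is actually needed is the exact rigidity statement: if an $n$-block $\alpha$ satisfies $0\leq\abs{\alpha}-\abs{v_m}<\abs{v_n}$ (with $\abs{v_n}>B$), then $\alpha$ contains \emph{exactly} $q_n^{(m)}$ expected occurrences of $v_n$, and consequently $\abs{\alpha}-\abs{v_m}$ equals a \emph{single} spacer value, hence is at most $B$ (Lemmas~\ref{vklengthinblocks} and~\ref{singlespacerdifference}, combined in Lemma~\ref{vkmisses}). This pins the additive error to $B$ rather than to $o(\abs{v_m})$, and it is exactly this that forces $E_{n,0}\cap E_{n,\abs{v_m}+B+1}=\emptyset$ for every $m$, contradicting Proposition~\ref{mixingproperty}(ii).

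Relatedly, your dismissal of the ``naive attempt with $u=w=v_n$'' is mistaken, and this misdiagnosis is what pushes you toward the vaguer two-scale construction. For \emph{expected} occurrences (equivalently, working with the clopen sets $E_{n,k}$, which is how the paper sets things up in Proposition~\ref{mixingproperty}), the set of realizable separations is \emph{not} eventually cofinite when the spacers are bounded: fixing $n$ with $\abs{v_n}>B+1$, every length in the window $(\abs{v_m}+B,\,\abs{v_m}+\abs{v_n})$ is forbidden, for every $m\geq n$, precisely because the hierarchical structure does not allow the ``accumulated spacer slop'' you worry about --- near length $\abs{v_m}$ the number of $v_n$-blocks is forced, and only one spacer's worth of slack survives. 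So the simple choice $U=V=E_{n,0}$ already completes the proof; no decorated words or asymmetric edge-pinning are needed. (The appeal to Theorem~\ref{mainweaklymixingtheorem} at the start is also unnecessary.) To repair your write-up, replace the heuristic $o(\abs{v_m})$ estimate by the exact block-counting argument above and apply it directly to expected occurrences of a single $v_n$.
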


\begin{theorem}\label{mainmeasureoneweakly}
Let $(X,T)$ be a rank-one subshift given by the cutting parameter $(q_n)$ and the spacer parameter $(a_{n,i})$. If the set $\{a_{n, i}:n\in\Nn, 1\leq i<q_m\}$ is co-finite, then $(X,T)$ is topologically mixing. 
\end{theorem}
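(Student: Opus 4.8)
We want to show that if the set $S = \{a_{n,i} : n\in\Nn,\ 1\leq i<q_n\}$ is co-finite in $\Nn$, then $(X,T)$ is topologically mixing, i.e. for any two nonempty open sets $U,V\subseteq X$ there is $N$ such that $T^k U\cap V\neq\emptyset$ for all $k\geq N$. As is standard for subshifts, it suffices to prove this when $U$ and $V$ are cylinders determined by finite words $u$ and $v$ occurring in points of $X$; and since every word occurring in $X$ occurs inside some building block $v_m$, we may assume $u$ and $v$ are (occurrences of subwords of) $v_m$ for a common large $m$. The plan is to exhibit, for every sufficiently large $k$, a point $x\in X$ together with a position where $u$ appears and a position exactly $k$ steps later where $v$ appears. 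Concretely, I will build a block of the form $v_m\, 1^{a}\, v_m$ sitting inside some $v_{m'}$ with $m'>m$, where the spacer gap $a$ is chosen so that the distance from the chosen occurrence of $u$ in the first $v_m$ to the chosen occurrence of $v$ in the second $v_m$ equals $k$.

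**Key steps.** First I would fix $m$ large enough that $u$ and $v$ both appear in $v_m$, say with $u$ starting at position $p_u$ and $v$ at position $p_v$ within $v_m$ (coordinates $0\le p_u,p_v<|v_m|$). If the block $v_m\,1^{a}\,v_m$ occurs in some $x\in X$ with the two copies of $v_m$ starting at absolute positions $0$ and $|v_m|+a$, then the occurrence of $u$ is at $p_u$ and the occurrence of $v$ is at $|v_m|+a+p_v$, so the gap is $k = |v_m| + a + p_v - p_u$. Thus I need $a = k - |v_m| - p_v + p_u$, which is a nonnegative integer as soon as $k$ is large (this fixes the threshold $N$), and moreover I need $a\in S$ — which holds for all large $a$ since $S$ is co-finite. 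The remaining point is realizability: I must ensure that $v_m\,1^{a}\,v_m$ with $a\in S$ actually occurs in some $x\in X$. Here I would use the recursive structure $v_{n+1} = v_n\,1^{a_{n,1}}\,v_n\,1^{a_{n,2}}\cdots 1^{a_{n,q_n-1}}\,v_n$: if $a = a_{n,i}$ for some $n\ge m$ and some $1\le i<q_n$, then $v_n$ contains $v_m$ as a prefix and as a suffix (by an easy induction on the generating sequence), so $v_n\,1^{a}\,v_n$ — hence $v_m\,1^{a}\,v_m$ — occurs inside $v_{n+1}$, and therefore inside $X$. So the threshold is: choose $N$ so large that for every $k\ge N$ the number $a(k) = k - |v_m| - p_v + p_u$ is positive and lies in $S$; co-finiteness of $S$ gives such an $N$.

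**Main obstacle.** The only genuine subtlety is the bookkeeping that turns "$v_m\,1^a\,v_m$ occurs in $X$" into the clean distance computation, and in particular verifying that every element of $S$ really does show up as a gap between two full copies of $v_m$ inside some $v_{n+1}$. This requires the lemma that $v_m$ is simultaneously a prefix and a suffix of $v_n$ for all $n\ge m$, together with the observation that in $v_{n+1}$ the subword $v_n\,1^{a_{n,i}}\,v_n$ literally appears for each $i$; chaining these, the suffix-copy of $v_m$ inside the first $v_n$ and the prefix-copy of $v_m$ inside the second $v_n$ are separated by exactly $a_{n,i}$ symbols $1$, as needed. A second, minor point is to make sure the constructed bi-infinite point genuinely belongs to $X$ rather than merely to the orbit closure of the blocks — but since $X$ is by definition the orbit closure of the sequence built from the $v_n$'s, any word appearing in cofinally many $v_n$'s appears in $X$, and one can simply take a limit of shifts of the canonical generating point. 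I expect the argument to be short once these structural facts about the generating sequence are in place.
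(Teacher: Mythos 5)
Your proof is correct and is essentially the paper's argument in a different wrapper: the paper first reduces mixing to the statement that every sufficiently large $h$ occurs as the length of an $n$-block (Proposition~\ref{mixingproperty}) and then realizes such lengths by the block $v_n1^{h-\abs{v_n}}$, which is exactly your $v_m 1^a v_m$ construction with $a$ supplied by co-finiteness. The one point to tighten is that you need $a$ to occur as a spacer $a_{n,i}$ with $n\geq m$, not merely $a\in S$; as in the paper's proof, this is automatic because only finitely many spacer values arise at levels below $m$, so the set of spacers at levels $\geq m$ is still co-finite.
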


We produce several other sufficient conditions for topological mixing for rank-one subshifts with unbounded spacer parameter. We also give some examples to show that a complete characterization for weak mixing or mixing can be subtle for rank-one subshifts with unbounded spacer parameter.

Finally, it is a well-known fact in topological dynamical systems that for a large class of minimal dynamical systems, a dynamical system is weakly mixing iff it has trivial maximal equicontinuous factor (see a general reference such as \cite{Auslander}). A natural problem then is whether we can classify the maximal equicontinuous factor for rank-one subshifts. In previous classifications for Toeplitz systems by Williams \cite{Williams} and for generalizations of Toeplitz systems by Downarowicz \cite{Downarowicz}, the maximal equicontinuous factor was found to be odometers.

We classify exactly which odometers can be factors of rank-one subshifts and completely classify the maximal equicontinuous factor for rank-one subshifts via the following theorem.

\begin{theorem}\label{mainmainmef}
Let $(X,T)$ be a rank-one subshift given by the cutting parameter $(q_n)$ and the spacer parameter $(a_{n,i})$. Let $(v_n)$ be the corresponding rank-one generating sequence. Then the following hold.
\begin{enumerate}
\item[\rm (1)] If $(X,T)$ has unbounded spacer parameter, then the maximal equicontinuous factor is trivial.
\item[\rm (2)] If $(X,T)$ has bounded spacer parameter, then the maximal equicontinuous factor is the largest finite factor of the system. In particular, this finite factor is $\Zz/p_{\max}\Zz$, where $p_{\max}$ is the largest integer $p$ for which there is $n\in\Nn$ such that for all $m\geq n$ and $1\leq i< q_m$, $p|(\abs{v_n}+a_{m,i})$.
\end{enumerate}
\end{theorem}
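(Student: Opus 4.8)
\emph{Part (1).} Because the spacer parameter is unbounded, the words $v_n$ contain arbitrarily long blocks $1^k$, so the point $\omega\in X$ all of whose coordinates equal $1$ lies in $X$ and is a fixed point of $T$; and $X$ is topologically transitive, being generated by $(v_n)$. Let $\pi\colon X\to Z$ be any equicontinuous factor. Then $Z$ is transitive, and a transitive equicontinuous system is minimal: in an invariant metric for which $T_Z$ is an isometry, if $z_0$ has dense orbit then for every $z$ some $T_Z^{k}z_0$ is close to $z$, hence $z_0$ is close to $T_Z^{-k}z$, so $z_0\in\overline{O(z)}$ and $\overline{O(z)}=Z$. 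But $\pi(\omega)$ is a fixed point of $Z$, and a minimal system with a fixed point is a one-point system; hence every equicontinuous factor of $X$ is trivial, i.e.\ the maximal equicontinuous factor is trivial.

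\emph{Part (2): setup and bookkeeping.} Now the spacer parameter is bounded, say $a_{n,i}<B$, and $(X,T)$ is minimal. Let $B_n\subseteq X$ be the cylinder determined by $v_n$ on coordinates $0,\dots,|v_n|-1$ (the base of the stage-$n$ tower), and let $v_\infty:=\lim_nv_n$. Two basic facts from the combinatorics of the construction: (i) if $T^px$ and $T^{p+\ell}x$ lie in $B_n$ with these occurrences of $v_n$ adjacent, then $\ell\in R_n:=\{|v_n|+a_{m,i}:m\ge n,\ 1\le i<q_m\}$; (ii) for any point of $X$ whose nonnegative coordinates spell $v_\infty$, every value of $R_n$ occurs in this way. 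Let $\mathcal N$ be the set of $N$ for which there is $n$ with $N\mid(|v_n|+a_{m,i})$ for all $m\ge n$ and $1\le i<q_m$; by the classification of finite factors established earlier, $\{\Zz/N\Zz:N\in\mathcal N\}$ is precisely the set of finite factors of $X$, and $\mathcal N$ is closed under divisors. I would first show $\mathcal N\subseteq\{1,\dots,B\}$: if $N>B$ were in $\mathcal N$ with witness $n$, then $0\le a_{m,i}<B<N$ and $a_{m,i}\equiv-|v_n|\pmod N$ force the $a_{m,i}$ with $m\ge n$ to equal a single value $a$, whence $v_m=v_n(1^av_n)^{q_n\cdots q_{m-1}-1}$ for $m\ge n$, so $v_\infty$ is periodic and $X$ finite, against non-degeneracy. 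I would then show $\mathcal N$ is closed under least common multiples: any $n'\ge n$ is again a witness for $N\in\mathcal N$ (since $|v_{n'+1}|=q_{n'}|v_{n'}|+\sum_ia_{n',i}\equiv|v_{n'}|\pmod N$), so two elements of $\mathcal N$ share a witness, and then so does their lcm. Being finite and closed under divisors and lcm, $\mathcal N$ has a maximum $p_{\max}$ that is a multiple of every element of $\mathcal N$; hence $\Zz/p_{\max}\Zz$ is a factor of $X$ which factors onto every finite factor of $X$ — the largest finite factor.

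\emph{Part (2): the main step.} It remains to prove that every equicontinuous factor of $X$ is finite; granting this, the maximal equicontinuous factor is equicontinuous and hence finite, so it is dominated by $\Zz/p_{\max}\Zz$, while $\Zz/p_{\max}\Zz$ is itself an equicontinuous factor, so the two agree. Let $\pi\colon X\to Z$ be equicontinuous; as in Part (1), $Z$ is minimal equicontinuous, hence a rotation by some $g_0$ on a compact monothetic group $G=\overline{\langle g_0\rangle}$ with a translation-invariant metric, so $d(T_Z^{c}z,z)$ is independent of $z$; write $\|cg_0\|$ for this value. The crux is that $\operatorname{diam}\pi(B_n)\to0$. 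Indeed the $B_n$ decrease to $\bigcap_nB_n=\{x\in X:x_{[0,\infty)}=v_\infty\}$, any two points of which agree on $[0,\infty)$ and are therefore forward asymptotic, hence proximal; as $Z$ is distal, $\pi$ is constant on $\bigcap_nB_n$, so the decreasing compacta $\pi(B_n)$ (whose intersection is $\pi(\bigcap_nB_n)$ by compactness) shrink to a point, and $\delta_n:=\operatorname{diam}\pi(B_n)\to0$. Fix $x^\ast\in\bigcap_nB_n$; by (ii), for every $m\ge n$ and $1\le i<q_m$ there are two occurrences of $v_n$ in $x^\ast$ at distance $|v_n|+a_{m,i}$, which produces $w\in\pi(B_n)$ with $\|(|v_n|+a_{m,i})g_0\|=d(T_Z^{|v_n|+a_{m,i}}w,w)\le\delta_n$. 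Since $X$ is infinite the spacers are not eventually constant, so for each $n$ we may choose $a_{m,i}\ne a_{m',i'}$ with $m,m'\ge n$; then $d_n:=a_{m,i}-a_{m',i'}$ satisfies $0<|d_n|<B$ and $\|d_ng_0\|\le2\delta_n$. Along a subsequence on which the bounded integers $d_n$ equal a fixed $d$, we get $\|dg_0\|=0$, so $g_0$ has finite order dividing $|d|$, and $G$ is finite. Thus $Z\cong\Zz/e\Zz$ for some $e$, so $e\in\mathcal N$ and $e\mid p_{\max}$, and $Z$ is dominated by $\Zz/p_{\max}\Zz$, which completes the proof.

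\emph{Main obstacle.} The real difficulty is this last step — equivalently, showing that a bounded spacer parameter admits no irrational topological eigenvalue — and the points needing care are the justification that $\operatorname{diam}\pi(B_n)\to0$ (identifying $\bigcap_n\pi(B_n)$ with $\pi(\bigcap_nB_n)$ and noting that forward-asymptotic pairs are proximal) and the verification, from the explicit form of the generating sequence, that each element of $R_n$ genuinely appears as a gap between two occurrences of $v_n$ in $v_\infty$.
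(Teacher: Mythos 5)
Your argument is correct, but it reaches the conclusion by a genuinely different route from the paper. The paper avoids the structure theory of equicontinuous systems entirely: it introduces a combinatorial property called \emph{partition proximality} (every point is linked to one of $p$ reference points of the form $V^*1^aV$ by a chain of three $\delta$-shadowings along the orbit), shows directly that this caps the cardinality of any equicontinuous factor at $p$ (Proposition~\ref{notafactorfinite}), and reduces everything to producing, for every $h\in\Nn$, two $n$-blocks whose lengths differ by exactly $hp_{\max}$; that last step is the technical heart of Theorem~\ref{boundedmef} and is carried out with an ``up-down gcd'' of the spacer set, an iterative block-surgery lemma (Lemma~\ref{mainconstruction}), and a short Euclidean-algorithm computation. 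You instead represent a minimal equicontinuous factor as a rotation by $g_0$ on a compact monothetic group, prove $\operatorname{diam}\pi(B_n)\to 0$ by the asymptotic-pair/distality collapse, read off $\|(|v_n|+a_{m,i})g_0\|\le\delta_n$ from return times of $V$ to the base, and extract a fixed nonzero $d$ with $|d|<B$ and $dg_0=0$, forcing the group to be finite. Your route is shorter and conceptually cleaner for this one theorem, at the price of importing the Halmos--von Neumann-type representation and standard distality/proximality facts; the paper's route is elementary and self-contained, and its block-length-difference machinery is reused verbatim to characterize weak mixing (Theorem~\ref{weaklymixingtheorem}), which your argument would not yield. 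Two small caveats: (a) both proofs, including yours, rest on the classification of finite factors --- you cite it as ``established earlier,'' and it is exactly Proposition~\ref{Zzncondition}, while your bookkeeping for $\mathcal N$ (bounded by $B$, witnesses stable under increasing $n$, closure under lcm) reproduces Proposition~\ref{maxfinite}; in a self-contained write-up you would need to supply the classification itself. (b) Your cylinder $B_n$ may contain unexpected occurrences of $v_n$, so your ``basic fact (i)'' needs an expectedness caveat; this is harmless because your main step only uses fact (ii), that every $|v_n|+a_{m,i}$ with $m\ge n$ is genuinely realized as a gap between adjacent expected occurrences of $v_n$ in $V$.
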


In particular, infinite odometers cannot be factors of rank-one subshifts.

The rest of the paper is organized as follows. In \S\ref{preliminaries} we give the basic definitions and properties of rank-one subshifts. In \S\ref{expectedness}, we develop the main tools in the study of rank-one subshifts and prove the technical results on the expectedness structure that will be used for the rest of the paper.
In \S\ref{MEF} through \S\ref{mixing}, we proved the main results on the maximal equicontinuous factors, weakly mixing, and mixing.

\section{Definitions and Preliminaries\label{preliminaries}}

\subsection{Topological dynamics}
We will work with standard definitions in topological dynamics. For us, a {\em topological dynamical system} is a pair $(X,T)$, where $X$ is a compact metric space and $T:X\to X$ is a continuous map. 

Let $(X,T)$ and $(Y,S)$ be topological dynamical systems. We say $(Y,S)$ is a {\em factor} of $(X,T)$ if there is a continuous, onto map $\varphi:X\to Y$, so that $\varphi\circ T=S\circ\varphi$. We call $\varphi$ the {\em factor map}. 

Let $d_Y$ be a compatible metric on $(Y,S)$. We say $(Y,S)$ is {\em equicontinuous} if for any $\epsilon>0$, there is some $\delta>0$ so that for any $y_1,y_2\in Y$, whenever $d(y_1,y_2)<\delta$,  we have $d_Y(S^l(y_1),S^l(y_2))<\epsilon$ for any $l\in \Zz$. 

We say that $(Y,S)$ is the {\em maximal equicontinuous factor} of $(X,T)$ if $(Y,S)$ is a factor of $(X,T)$, $(Y,S)$ is equicontinuous, and for any $(Y',S')$ which is an equicontinuous factor of $(X,T)$, $(Y',S')$ is also a factor of $(Y,S)$. In particular, if $\varphi$ is the factor map from $(X,T)$ to $(Y,S)$ and $\psi$ is the factor map from $(X,T)$ to $(Y',S')$, then there is a factor map $\theta$ from $(Y,S)$ so $(Y',S')$ so that the following diagram commutes:
\begin{diagram}
(X,T) & & \\
\dTo^{\varphi} &\rdTo^{\psi} & \\
(Y,S) & \rDashto^{\theta} & (Y',S')
\end{diagram} 

It is well-known that any topogical dynamical system has a maximal equicontinuous factor (c.f. \cite{EllisGottschalk}). However, it is not necessarily easy to determine what the maximal equicontinuous factor is for a given topological dynamical system. We will do this in \S\ref{MEF} for rank-one subshifts.

We will also study the properties of topological weak mixing and topological mixing for rank-one subshifts. Since we only work in the topological setting, we will omit the modifier ``topological" throughout the paper. 

Recall that a topological dynamical system $(X,T)$ is {\em weakly mixing} if for any non-empty open sets $U,V,W,Z\subseteq X$, there is some $l\in\Zz$ so that $T^l(U)\cap V\neq \emptyset$ and $T^l(W)\cap Z\neq \emptyset$. $(X,T)$ is {\em mixing} if for any non-empty open sets $U,V\subseteq X$, there is an $L\in \Nn$, so that for any $l\geq L$, $T^l(U)\cap V\neq \emptyset$.

\subsection{Rank-one subshifts} The topological dynamical systems we study in this paper will be rank-one subshifts. 

In general, a {\em subshift} is a topological dynamical system $(X,T)$ where $X$ is a closed subspace of $A^\Zz$ for some discrete space $A$ and $T$ is the left shift map given by
$$T(x)(n)=x(n+1)$$
for $x\in X\subseteq A^\Zz$ and $n\in\Zz$.  In this paper we will be working with $A=2=\{0,1\}$ unless noted otherwise.

We fix some notation. We let $A^{<\omega}$ denote the set of all finite words over the alphabet $A$. If $x\in X$ and $n_1<n_2\in \Zz$, we let $x[n_1,n_2]$ denote the word $x(n_1)x(n_1+1)...x(n_2-1)x(n_2)$. For a finite word $w$, we denote the {\em length} of $w$ by $\abs{w}$. In particular, $\abs{x[n_1,n_2]}=n_2-n_1+1$.

The topology on $X$ is the subspace topology coming from the product topology on $A^\Zz$. It follows that the basic open subsets of $X$ are of the form 
$$U_{\alpha,k}=\{x\in X:x[k,k+\abs{\alpha}-1]=\alpha\}$$ 
for some word $\alpha\in A^{<\omega}$ and $k\in\Zz$.

Note that the shift map is well defined on the entire ambient space $A^\Zz$ as an autohomeomorphism. It follows that $X$ is a closed invariant subspace of $A^\Zz$, and $T$ is an autohomeomorphism of $X$.  

Next we recall the definition of our key concept studied in this paper, rank-one subshift.

Let $(q_n)$ be a sequence of natural numbers with each $q_n>1$. Let $(a_{n,i})$ be a doubly-indexed sequence of natural numbers where $n$ ranges over all natural numbers and $1\leq i<q_n$. 
The {\em rank-one generating sequence} $(v_n)$ given by {\em cutting parameter} $(q_n)$ and {\em spacer parameter} $(a_{n,i})$ is defined inductively by
$v_0=0$ and 
$$v_{n+1}=v_{n}1^{a_{n,1}}v_n1^{a_{n,2}}v_n...v_n1^{a_{n,q_{n}-1}}v_n $$
for all $n\in\Nn$. Note that in a rank-one generating sequence all words start and end with $0$ and each $v_n$ is an initial segment of $v_{n+1}$. This allows us to define
the {\em infinite rank-one word} $V$ to be the limit of the $v_n$, i.e. for each $k\in \Nn$, let $V(k)=v_n(k)$ for any $n$ such that $\abs{v_n}> k$. Finally, the {\em rank-one subshift} $(X_V, T)$ given by the infinite rank-one word $V$ is defined by
$$ X_V=\{ x\in X: \mbox{every finite subword of $x$ is a subword of $V$}\}. $$

The terminology of cutting and spacer parameters is inspired by the cutting-and-stacking construction that is used to define rank-one transformations in the measure-theoretic sense. We will not go into details of this construction, but will just note that 
the cutting parameter $q_n$ is the number of copies of $v_n$ that are used to construct $v_{n+1}$, and the $a_{n,i}$ specify the numbers of 1s inserted in between copies of $v_n$. We will thus refer to these 1s as {\em spacers}. It is useful to note that
$x\in X_V$ iff every subword of $x$ is a subword of $v_n$ for some $n$ iff every subword of $x$ is a subword of $v_n$ for sufficiently large $n$.

We say that the rank-one subshift has  {\em bounded spacer parameter} if there is a constant $B$ so that for all $n,i$, we have $a_{n,i}\leq B$. Otherwise, we say that it has {\em unbounded spacer parameter}. A rank-one subshift with bounded spacer parameter is a minimal dynamical system. A rank-one subshift with unbounded spacer parameter has exactly one fixed point $1^\Zz$.

\subsection{Some basic facts} We identify some specific elements in a rank-one subshift.

Let $(v_n)$ be a rank-one generating sequence and let $V=\lim_n v_n$ be the corresponding infinite rank-one word. Then $V$ is of the form
$$ V=v_n1^{k_0}v_n 1^{k_1} v_n\cdots\cdots $$
with $k_0, k_1, \dots\in\Nn$. If $V$ is periodic then the rank-one subshift generated is finite. We regard this the degenerate case. 

Observe that each $v_n$ is also an end segment of $v_{n+1}$. This allows us to define a {\em dual} infinite rank-one word $V^*$ as the {\em dual} limit of the $v_n$. Thus $V"$ is of the form
$$ V^*=\cdots\cdots v_n1^{l_1}v_n1^{l_0}v_n $$
with $l_0, l_1,\dots\in \Nn$. More formally, $V^*: -\Nn\to 2$ where for each $k\in\Nn$, $V(-k)=v_n(\abs{v_n}-k-1)$ for any $n$ such that $\abs{v_n}>k$.

It is easy to see that if a natural number $a\in\Nn$ occurs infinitely often in the spacer parameter sequence $(a_{n,i})$, then $V^*1^aV\in X$. This is because, every finite subword of $V^*1^aV$ is necessarily a subword of $v_n1^av_n$ for all sufficiently large $n$, which by our assumption is a finite subword of $v_{n+1}$ for infinitely many $n$. The following lemmas are immediate. 

\begin{lemma}\label{boundedVpoints}
Let $(X,T)$ be a rank-one subshift with bounded spacer parameter. Then there is $a\in\Nn$ such that $V^*1^aV$ in $X$. Moreover, if $(X, T)$ is infinite, then there are at least two values $a,a'\in \Nn$ such that $V^*1^aV,V^*1^{a'}V\in X$.
\end{lemma}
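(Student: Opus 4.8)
The plan is to establish both assertions by exhibiting explicit natural numbers $a$ (and $a'$) occurring infinitely often in the spacer parameter sequence $(a_{n,i})$ and then invoking the observation made just before the lemma, namely that if $a$ occurs infinitely often among the $a_{n,i}$ then $V^*1^aV\in X$. Since the spacer parameter is bounded by some constant $B$, the set of values $\{a_{n,i}: n\in\Nn,\ 1\leq i<q_n\}$ is a subset of the finite set $\{0,1,\dots,B\}$. The total number of pairs $(n,i)$ with $n\in\Nn$ and $1\leq i<q_n$ is infinite (each $q_n>1$, so for every $n$ there is at least one such $i$), so by the pigeonhole principle at least one value $a\in\{0,\dots,B\}$ is attained for infinitely many pairs $(n,i)$. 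Fixing such an $a$ gives $V^*1^aV\in X$, which proves the first statement.

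For the "moreover" clause, the key point is that if only one value $a$ occurred infinitely often, then all but finitely many of the spacers would equal $a$, and I claim this forces $(X,T)$ to be finite, contradicting the hypothesis. First I would argue that finitely many exceptional spacers can be absorbed: there is some level $N$ such that for all $n\geq N$ and all $1\leq i<q_n$ we have $a_{n,i}=a$. Then for $n\geq N$, the word $v_{n+1}$ is built from $q_n$ copies of $v_n$ separated uniformly by blocks $1^a$; iterating, $v_m$ for $m>n\geq N$ is a concatenation of copies of $v_N$ all separated by $1^a$ (since the concatenation pattern at each level inserts only $1^a$ between consecutive copies). Hence the infinite word $V$ is eventually (indeed, past the initial segment $v_N$) just the periodic word $(v_N 1^a)^\infty$ with period $|v_N|+a$. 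A periodic $V$ generates a finite subshift — this is exactly the degenerate case flagged in the text just above the lemma. So if $(X,T)$ is infinite, no single value can monopolize the spacers cofinitely, meaning at least two distinct values each occur infinitely often; applying the observation to each yields $V^*1^aV, V^*1^{a'}V\in X$ with $a\neq a'$.

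I expect the main obstacle to be the careful bookkeeping in the claim that "all but finitely many spacers equal $a$ implies $V$ is eventually periodic." One must be slightly careful about what "finitely many exceptional spacers" means: a priori there could be infinitely many pairs $(n,i)$ with $a_{n,i}\neq a$ as long as no single non-$a$ value repeats infinitely often — but since the spacers are bounded, there are only finitely many possible values in total, so "exactly one value occurs infinitely often" really does force all but finitely many spacers to equal $a$, and moreover forces the exceptional ones to lie below some level $N$ (for each of the finitely many other values $a'$, it occurs only finitely often, hence only at finitely many pairs, hence only at levels below some $N_{a'}$; take $N$ to be the max). With that observation in hand, the periodicity of $V$ past $v_N$ is a routine induction on the level, and finiteness of the generated subshift in the periodic case is standard (or can be cited as the degenerate case already discussed). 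The reverse direction of the "moreover" — that two distinct infinitely-recurring values do exist when $(X,T)$ is infinite — then follows immediately by contraposition.
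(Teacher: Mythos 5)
Your proof is correct and follows exactly the route the paper intends: the lemma is stated as ``immediate'' from the preceding observation that any spacer value occurring infinitely often yields $V^*1^aV\in X$, and your pigeonhole argument for the first claim plus the contrapositive (a single cofinitely-dominant spacer value forces $V$ to be periodic, hence $X$ finite --- the degenerate case the paper flags) for the ``moreover'' clause is precisely the fleshed-out version of that. No gaps.
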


\begin{lemma}\label{infiniteones}
Let $(X,T)$ be a rank-one subshift with unbounded spacer parameter. Then the following infinite words are elements of $X$:
$$1^\Zz;\  1^{-\Nn}V; \ V^*1^{\Nn}.$$
Moreover, these are the only forms of infinite words with infinitely many consecutive $1$s.
\end{lemma}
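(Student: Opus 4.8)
The plan is to verify the three memberships directly from the subword criterion and then to prove the ``only forms'' assertion by showing that an infinite one-sided block of $1$s in an $x\in X$ forces the rest of $x$ to be a shift of $V$ or of $V^*$. \emph{Membership.} Recall $x\in X$ iff every finite subword of $x$ is a subword of $v_n$ for all sufficiently large $n$. For $1^\Zz$ this is the remark preceding the lemma: unboundedness of $(a_{n,i})$ gives, for each $m$, some $a_{n,i}\ge m$, so $1^m$ occurs in $v_{n+1}$. For $1^{-\Nn}V$, a finite subword is a power $1^m$, a subword of $V$, or a word $1^m w$ with $w$ an initial segment of $V$; I would pick $n$ with $w$ an initial segment of $v_n$, then $N\ge n$ and $i$ with $a_{N,i}\ge m$, so that $v_{N+1}$ contains $v_N1^{a_{N,i}}v_N$ and hence, as $v_N$ begins with $v_n$, contains $1^m v_n\supseteq 1^m w$. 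The word $V^*1^\Nn$ is handled by the mirror argument, using that $v_N$ \emph{ends} with $v_n$ and that the finite subwords of $V^*$ are exactly those of $V$.

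\emph{The structural lemma} is the crux. I would first record that the maximal runs of $1$s in any $v_N$ are precisely the spacer blocks $1^{a_{j,i}}$ with $0\le j<N$: indeed $v_N$ is a concatenation of \emph{intact} copies of $v_{N-1}$ separated by the blocks $1^{a_{N-1,i}}$, and each $v_{N-1}$ begins and ends with $0$, so this follows by induction on $N$. Writing $B_\ell=\max\{a_{j,i}:0\le j\le\ell,\ 1\le i<q_j\}$, it follows that any maximal run of $1$s of length $>B_\ell$ in $v_N$ is a spacer $1^{a_{j,i}}$ at some level $j>\ell$; inspecting the defining word $v_{j+1}=\cdots v_j1^{a_{j,i}}v_j\cdots$, which sits intact inside $v_N$, such a run is immediately preceded, and immediately followed, within $v_N$, by complete copies of $v_j$, and hence by complete copies of $v_{\ell+1}$ (a terminal, resp.\ initial, segment of $v_j$).

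\emph{Only forms.} Let $x\in X$ contain an infinite run of $1$s. If $x=1^\Zz$ we are done; otherwise $x$ takes the value $0$ somewhere, and the run extends to $+\infty$ or to $-\infty$. I treat the first case, the second being symmetric and producing a shift of $1^{-\Nn}V$ via the ``followed by'' half of the structural lemma. So suppose $x(n)=1$ for all large $n$, and set $k:=\max\{n:x(n)=0\}$, so $x(n)=1$ for all $n>k$. Fix $N$, put $M=B_{N-1}+1$, and consider the finite subword $u=x[k-\abs{v_N}+1,\,k+M]$; it is a subword of $v_{N'}$ for all large $N'$, and for $N'$ large I may take an occurrence of $u$ that lies inside a single copy of $v_{N'-1}$ and starts at a position $\ge\abs{v_{N'-1}}$ (use the second copy of $v_{N'-1}$ inside $v_{N'}$), leaving ample room on its left. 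In this occurrence the letter matching $x(k)$ is a $0$ immediately followed by $M>B_{N-1}$ ones, so the structural lemma shows that the $\abs{v_N}$ letters of $v_{N'}$ ending at that $0$ form a copy of $v_N$; reading this back through the occurrence yields $x[k-\abs{v_N}+1,k]=v_N$. Since $N$ is arbitrary, $x$ agrees on $(-\infty,k]$ with the copy of $V^*$ whose last letter sits at $k$, and $x(n)=1$ for all $n>k$, so $x$ is a shift of $V^*1^\Nn$. (Degenerate candidates such as $1^{-\Nn}01^\Nn$ fall out automatically: they violate $x[k-\abs{v_N}+1,k]=v_N$ and so are not in $X$.)

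\emph{Main obstacle.} The delicate point is the structural lemma and the bookkeeping around it: one must be sure that a long run of $1$s is flanked by \emph{complete} copies of $v_j$ inside $v_{N'}$ (which uses that the concatenation defining $v_{N'}$ leaves every $v_j$ intact), and one must place the occurrence of the window $u$ far enough from the boundary of $v_{N'}$ to leave room for the copy of $v_j$ on the relevant side. The remaining steps are the subword criterion together with elementary casework.
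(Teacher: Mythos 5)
Your proof is correct. Note that the paper does not actually prove this lemma: it declares it ``immediate'' after the one-paragraph discussion of why $V^*1^aV\in X$ when $a$ recurs in the spacer parameter, so there is no official argument to compare against. Your membership verifications are exactly the subword check the paper has in mind (for $1^{-\Nn}V$ one uses that $v_N$ begins with $v_n$, for $V^*1^{\Nn}$ that it ends with $v_n$, and unboundedness to find $a_{N,i}\ge m$). The real content you supply is the structural lemma --- maximal runs of $1$s in $v_{N'}$ are exactly spacer blocks, and any run longer than $B_\ell$ is a level-$j$ spacer with $j>\ell$, hence flanked by intact copies of $v_j$ --- which is precisely what is needed to make the ``only forms'' claim rigorous, and your deduction $x[k-\abs{v_N}+1,k]=v_N$ for all $N$ from the window $u=x[k-\abs{v_N}+1,k+M]$ is sound (indeed the produced copy of $v_N$ lies entirely inside $u$, so the extra care about placing the occurrence of $u$ away from the boundary of $v_{N'}$ is harmless but not actually needed). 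The observation that hybrids like $1^{-\Nn}w1^{\Nn}$ are excluded because $x\!\upharpoonright\!(-\infty,k]$ is forced to be $V^*$ correctly closes the case analysis.
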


Unless we specify otherwise, all rank-one subshifts we work with will be infinite.

\section{The Combinatorics of Expectedness \label{expectedness}}

\subsection{Expectedness}

One key concept in the study of rank-one subshifts is the notion of expectedness, which was defined in \cite{GaoHill}. We recall this notion and some facts.

If $(v_n)$ is a rank-one generating sequence, we have
$$ v_{n+1}=v_n1^{a_{n,1}}v_n1^{a_{n,2}}v_n\cdots 1^{a_{n,q_n-1}}v_n. $$
Each of the demonstrated occurrence of $v_n$ in this expression is called an {\em expected occurrence}. There might be unexpected occurrences of $v_n$ in this expression that occur as a subword of $v_n1^k v_n$ (where $1^k$ occurs in $v_n$), but it is important that we work with expected occurrences of $v_n$ when we consider the combinatorics of rank-one words.

More generally, for any $m>n$, $v_m$ can also be written as
$$ v_m=v_n1^{a_1}v_n1^{a_2}v_n\cdots v_n1^{a_t}v_n. $$
Note that for each $1\leq j\leq t$, there is some $n\leq n'<m$ and $1\leq i<q_{n'}$ such that $a_j=a_{n',i}$, i.e., the indices appeared in the expression all come from the spacer parameter in between level $n$ and level $m$. We refer to the demonstrated occurrences of $v_n$ in this expression also as {\em expected occurrences}.

When we write the infinite rank-one word $V$ in the form
$$ V=v_n1^{k_0}v_n1^{k_1}v_n\dots, $$
we again call each demonstrated occurrence of $v_n$ an {\em expected occurrence}. Here the indices demonstrated all come from the spacer parameter above level $n$, i.e., for each $j\in\Nn$ there is $n'>n$ and $1\leq i<q_{n'}$ such that $k_j=a_{n',i}$.

It was shown in \cite{GaoHill} that each non-$1^\Zz$ element of an infinite rank-one subshift can also be decomposed uniquely into expected occurrences of $v_n$ with spacers in between. To be precise, if $(X, T)$ is a rank-one subshift and $x\in X\!\setminus\!\{1^\Zz\}$, then there is a unique way to write $x$ in the form
$$ x=\cdots\cdots v_n1^{k_{-1}}v_n1^{k_0}v_n1^{k_1}v_n\cdots\cdots $$
for any $n\in\Nn$. The demonstrated occurrences of $v_n$ in $x$ are called {\em expected occurrences}. In this unique expression the indices again come from the spacer parameter above level $n$, i.e., for each $j\in\Zz$ there is $n'>n$ and $1\leq i<q_{n'}$ such that $k_j=a_{n',i}$.

It is easy to see that all these notions of expected occurrence cohere with each other. For instance, if there is an expected occurrence of $v_m$ in $V$ (or in any $x\in X$) and an expected occurrence of $v_n$ in $v_m$, then this occurrence of $v_n$ in $V$ (or in $x\in X$) is expected. 

It was also shown in \cite{GaoHill} that the sets of the form
$$ E_{n,k}=\{x\in X: \mbox{$x$ has an expected occurrence of $v_n$ beginning at position $k$}\} $$
where $n\in\Nn$ and $k\in\Zz$, generate the topology of $X\!\setminus\!\{1^\Zz\}$.

In the following, we collect some basic facts about the sets $E_{n,k}$ for our use in the rest of this paper.

\begin{proposition}[\cite{GaoHill}]\label{Evnkfacts}
Let $(X,T)$ be a rank-one subshift generated by $(v_n)$. Then the following hold.
\begin{enumerate}
\item Each $E_{n,k}$ is clopen.\label{Evnkclopen}
\item For any $n\in\Nn$ and $k,l\in \Zz$, $T^l(E_{n,k})=E_{n,k-l}$.\label{Evnkshift} 
\item For any $n\in \Nn$, there is a constant $C$ and finitely many words $\alpha_1, \dots,\alpha_r$ with $\abs{\alpha_j}\leq C$ for each $1\leq j\leq r$, so that for any $x\in X$ and $k\in \Zz$, $x\in E_{n,k}$ iff $x[k,k+\abs{\alpha_j}-1]=\alpha_j$ for some $1\leq j\leq r$.\label{Evnkdetermined}
\item For any open set $U\subseteq X$, there is some $n\in\Nn$ and $k\in\Zz$ so that $E_{n,k}\subseteq U$.\label{Evnkcontain}
\item If $(X, T)$ has bounded spacer parameter, then $\{E_{n,k}:n\in\Nn,k\in\Zz\}$ is a subbasis for the topology of $X$.
\item If $(X,T)$ has unbounded spacer parameter, then $\{E_{n,k}:n\in\Nn,k\in\Zz\}\cup \{U_{1^n,k}:n\in\Nn,k\in\Zz\}$ is a subbasis for the topology of $X$.
\end{enumerate}
\end{proposition}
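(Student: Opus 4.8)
The plan is to concentrate the real work in item (3) and to derive (1) immediately from it and (4), (5), (6) from it together with the decomposition facts recalled above; I would treat the items in the order (2), (3), (1), (4), then (5) and (6) together. Item (2) should be immediate from the definitions: expectedness of an occurrence of $v_n$ in $x$ is phrased entirely in terms of the canonical bi-infinite decomposition $x = \cdots v_n 1^{k_{-1}} v_n 1^{k_0} v_n \cdots$, which is translation-equivariant, so $x \in E_{n,k}$ exactly when $T^l x$ has an expected $v_n$ beginning at $k - l$; that is, $T^l(E_{n,k}) = E_{n,k-l}$.

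For (3), fix $n$ and take $C$ on the order of $\abs{v_n}$ (large enough that, in particular, $C$ exceeds the longest block of $1$'s inside $v_n$). The key point is that whether an occurrence of $v_n$ at position $k$ is expected depends only on the right tail $x[k], x[k+1], \dots$: an \emph{un}expected occurrence of $v_n$ at $k$ must overlap an expected copy of $v_n$ beginning at some $k' < k$, and since $v_n$ ends in $0$ one checks that this forces $v_n$ to coincide with one of its own proper suffixes and bounds the intervening spacer by $\abs{v_n}$, so the whole configuration is visible inside the window $x[k], \dots, x[k + C - 1]$. Since ``$x[k,\dots]$ begins with an expected copy of $v_n$'' means precisely that this right tail starts $v_n 1^{j_0} v_n 1^{j_1} v_n \cdots$ with the $j_i$ coming from the spacer parameter above level $n$, and since the words of length $C$ realizing this (or one of the finitely many unexpected geometries) form a finite set, we obtain the constant $C$ and the list $\alpha_1, \dots, \alpha_r$. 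The step I expect to be the main obstacle is this finite analysis of how $v_n$ can overlap a copy of itself with a spacer in between --- in particular, ruling out that an unexpected occurrence looks on $x[k,\dots,k+C-1]$ exactly like an expected one. Here one exploits the recursive structure $v_n = v_{n-1}1^{a_{n-1,1}}v_{n-1}\cdots v_{n-1}$ together with the coherence of expectedness across the $v_m$'s and the uniqueness of the decomposition; this is the combinatorial bookkeeping carried out in \cite{GaoHill}.

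Given (3), the rest is short. For (1), observe that (3) exhibits $E_{n,k}$ as the finite union $\bigcup_{j=1}^r (U_{\alpha_j,k} \cap X)$ of clopen cylinders. For (4), use that the family $\{E_{n,k}\}$ already generates the topology of $X \setminus \{1^\Zz\}$, which is a dense open subset of $X$ --- dense because $1^\Zz$, when it lies in $X$, is non-isolated by Lemma \ref{infiniteones} --- so any nonempty open $U \subseteq X$ meets $X \setminus \{1^\Zz\}$ in a nonempty open set, hence contains a nonempty finite intersection $\bigcap_{i=1}^t E_{n_i,k_i}$; choosing a point $x$ of this intersection and a level $N$ large enough that a single expected copy of $v_N$ in $x$ covers all of the finitely many windows $[k_i, k_i + \abs{v_{n_i}} - 1]$, one gets $E_{N,K} \subseteq U$. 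Item (5) is the same argument with $1^\Zz \notin X$, so that $X\setminus\{1^\Zz\} = X$ and the $E_{n,k}$ are already a subbasis. For (6), the sets $E_{n,k}$ cannot separate neighborhoods of $1^\Zz$ or pin down semi-infinite blocks of $1$'s (the points $1^{-\Nn}V$, $V^*1^{\Nn}$ and their shifts), so one adjoins the cylinders $U_{1^m,k}$: given $x \in X$ and a prescribed basic neighborhood, cover its coordinate window by the finitely many expected copies of $v_n$ meeting it (for $n$ large), which contribute sets $E_{n,k}$, and by the finitely many maximal blocks of $1$'s meeting it, which contribute cylinders $U_{1^m,k}$ --- the intersection of these finitely many sets is a neighborhood of $x$ inside the prescribed one, and for $x = 1^\Zz$ only cylinders $U_{1^m,k}$ are needed. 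Hence $\{E_{n,k}\} \cup \{U_{1^m,k}\}$ is a subbasis, which is (6).
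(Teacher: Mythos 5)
The paper itself offers no proof of this proposition---it is imported wholesale from \cite{GaoHill}---so your reconstruction can only be measured against that source. Your architecture is the right one: item (3) is the crux, (1) falls out of it as a finite union of cylinders $\bigcup_j(U_{\alpha_j,k}\cap X)$, (2) is definitional via the translation-equivariance of the unique decomposition, and (4)--(6) follow from the fact that the $E_{n,k}$ generate the topology off $1^\Zz$. You are also right to isolate the one-sided recognizability of expected occurrences as the hard combinatorial step, and deferring it to \cite{GaoHill} is no worse than what the paper does. One caution there: your guess that $C$ can be taken ``on the order of $\abs{v_n}$'' is too optimistic. Already for $v_1=010$ and $v_2=v_1 1 v_1=0101010$, the window of length $5>\abs{v_1}$ starting at the expected position $0$ and the one starting at the unexpected position $2$ are both $01010$; in general $C$ must be taken comparable to $\abs{v_m}$ for a suitable $m>n$, not to $\abs{v_n}$.

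The genuine gap is in (4). Having located a nonempty intersection $\bigcap_i E_{n_i,k_i}\subseteq U$, you ``choose a point $x$ of this intersection and a level $N$ large enough that a single expected copy of $v_N$ in $x$ covers all of the finitely many windows.'' For a fixed $x$ no such $N$ need exist: if $x$ is a shift of $V^*1^aV$ and the windows $[k_i,k_i+\abs{v_{n_i}}-1]$ lie on both sides of the junction, then for \emph{every} $N$ the junction is a gap between two consecutive expected occurrences of $v_N$, so no single expected $v_N$ ever covers all the windows. The conclusion is still true, but the choice of point must be repaired: the spacer value $a$ at such a persistent junction recurs at arbitrarily high levels, so for large $m$ the configuration $v_m1^av_m$ occurs with both copies expected inside a single expected occurrence of $v_{m+1}$; replacing $x$ by a point $y$ realizing that configuration at the junction, one gets $y\in\bigcap_i E_{n_i,k_i}$ (it agrees with $x$ on a window containing all the $[k_i,k_i+\abs{v_{n_i}}-1]$, with the same expectedness pattern) for which a single expected $v_{m+1}$ does cover everything, and then $E_{m+1,K}\subseteq U$ as you intend. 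With that repair, and granting \cite{GaoHill} for (3), your outline for (1), (2), (5) and (6) goes through.
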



\subsection{Blocks} We introduce a new concept to facilitate our study of the combinatorics of the expected occurrences of $v_n$.

\begin{definition}
Let $n\in\Nn$. A finite word $\alpha$ is called an {\em $n$-block} if $\abs{\alpha}\geq \abs{v_n}$, and there are $t<s\in\Nn$ such that $\alpha=V[t,s]$ and there are expected occurrences of $v_n$ in $V$ starting at positions $t$ and $s+1$.
\end{definition}

In general, any $n$-block is of the form
$$ \alpha=v_n1^{k_0}v_n 1^{k_1}\cdots v_n 1^{k_r}, $$
where each demonstrated occurrence of $v_n$ comes from an expected occurrence of $v_n$ in $V$. We will refer to these occurrences of $v_n$ also as {\em expected occurrences} of $v_n$ in $\alpha$. 

\begin{lemma}\label{blockequivalence} Let $\alpha$ be a finite word and $n\in\Nn$. Then the following are equivalent:
\begin{enumerate}
\item[(i)] $\alpha$ is an $n$-block.
\item[(ii)] There is some $x\in X$ and $t<s\in\Zz$ such that $\alpha=x[t,s]$ and $x\in E_{n,t}\cap E_{n,s+1}$.
\item[(iii)] For any $x\in X\!\setminus\!\{1^\Zz\}$ there are $t<s\in\Zz$ such that $\alpha=x[t,s]$ and $x\in E_{n,t}\cap E_{n,s+1}$.
\end{enumerate}
\end{lemma}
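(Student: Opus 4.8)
The plan is to prove the cycle of implications (iii) $\Rightarrow$ (ii) $\Rightarrow$ (i) $\Rightarrow$ (iii), since (iii) $\Rightarrow$ (ii) is essentially immediate once we know $X\setminus\{1^\Zz\}$ is nonempty (which holds since $(X,T)$ is infinite, using Lemma~\ref{boundedVpoints} or Lemma~\ref{infiniteones}), and (ii) $\Rightarrow$ (i) reduces to transporting a configuration in an arbitrary $x\in X$ back to $V$ itself.

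First I would handle (ii) $\Rightarrow$ (i). Given $x\in X$ and $t<s$ with $\alpha = x[t,s]$ and $x\in E_{n,t}\cap E_{n,s+1}$, I want to find the same pattern inside $V$. Since $x\in E_{n,t}$ and $x\in E_{n,s+1}$, there are consecutive expected occurrences of $v_n$ in $x$ at positions $t = t_0 < t_1 < \cdots < t_r < t_{r+1} = s+1$, so that $\alpha = v_n 1^{k_0} v_n 1^{k_1}\cdots v_n 1^{k_{r-1}} v_n 1^{k_r}$ where $k_j = t_{j+1} - t_j - |v_n|$. Wait — I should be careful: $\alpha$ ends at $s$, and there is an expected occurrence of $v_n$ starting at $s+1$, so $\alpha$ ends with the block of spacers $1^{k_r}$ preceding that occurrence (possibly $k_r = 0$); this matches the general form of an $n$-block stated after the definition. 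Each $k_j$ equals some $a_{n',i}$ with $n' \geq n$ by the coherence/uniqueness of the expected decomposition quoted from \cite{GaoHill}. Now $\alpha$ is a finite subword of $x$, hence a finite subword of $V$ (indeed of $v_m$ for all large $m$, by the characterization of $X_V$). The key point is that an occurrence of $\alpha$ as a subword of $V$ automatically has its internal copies of $v_n$ lining up with expected occurrences of $v_n$ in $V$: I would argue this using Proposition~\ref{Evnkfacts}(3), namely that membership in $E_{n,k}$ is detected by a finite window of bounded width around position $k$; since $x$ and $V$ agree on the windows witnessing $x\in E_{n,t}$ and $x\in E_{n,s+1}$ once we locate $\alpha$ in $V$, the corresponding positions in $V$ are expected occurrences of $v_n$. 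This gives $t' < s'$ with $\alpha = V[t',s']$ and expected occurrences of $v_n$ at $t'$ and $s'+1$; together with $|\alpha|\geq|v_n|$ (inherited from $x\in E_{n,t}$), this is exactly the definition of an $n$-block.

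Next, (i) $\Rightarrow$ (iii). Let $\alpha = v_n 1^{k_0}v_n\cdots v_n 1^{k_r}$ be an $n$-block and fix an arbitrary $x\in X\setminus\{1^\Zz\}$. By the unique expected decomposition of $x$ from \cite{GaoHill}, write $x = \cdots v_n 1^{j_{-1}} v_n 1^{j_0} v_n 1^{j_1}\cdots$ where each $j_\ell$ is some spacer $a_{n',i}$ with $n' > n$. I need to find a place in $x$ where the finite spacer-pattern $k_0,\ldots,k_{r-1}$ (the ones strictly between copies of $v_n$ in $\alpha$) appears consecutively. The cleanest route: $\alpha$ is a subword of $v_m$ for some large $m$, and since each of the (at least one) internal copies of $v_n$ in $\alpha$ is expected in $V$, the whole string $v_n 1^{k_0}\cdots 1^{k_{r-1}} v_n$ (i.e.\ $\alpha$ truncated so it begins and ends with a full $v_n$) occurs as an expected-to-expected segment inside $v_m$. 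Now use minimality-type recurrence: every $x\in X\setminus\{1^\Zz\}$ contains $v_m$ as an expected occurrence (this follows because every $x\ne 1^\Zz$ has an expected occurrence of $v_{m+1}$, hence of $v_m$ inside it, by coherence), and therefore contains this segment with all copies of $v_n$ expected in $x$. Finally I re-attach the trailing $1^{k_r}$: since $\alpha$ ends at a position immediately before an expected occurrence of $v_n$ in $V$, and since the relevant spacer value $k_r$ is realized between consecutive expected $v_n$'s above level $n$, I can choose the occurrence of the truncated segment in $x$ so that it is followed by exactly $k_r$ spacers and then another expected $v_n$ — this requires locating the segment inside an expected occurrence of a sufficiently high $v_{m'}$ that itself contains the pattern $v_n 1^{k_0}\cdots v_n 1^{k_r} v_n$, which exists since $\alpha$ extended by one more $v_n$ is again a subword of some $v_{m'}$. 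This yields $t<s$ with $\alpha = x[t,s]$, $x\in E_{n,t}$, and $x\in E_{n,s+1}$, which is (iii).

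The main obstacle is the bookkeeping in (i) $\Rightarrow$ (iii): one must be genuinely careful that the \emph{expected} occurrences of $v_n$ in $V$ at the two ends of $\alpha$ transfer to expected occurrences in an arbitrary $x$, rather than merely unexpected sub-occurrences of $v_n$. This is exactly what the coherence principle and the finite-window criterion Proposition~\ref{Evnkfacts}(3) are for, but it needs to be invoked at the right granularity — the safe move is always to work inside an expected occurrence of a high enough $v_m$ in $x$, where the expected structure of everything below level $m$ is inherited verbatim from $V$. Everything else (the implication (iii) $\Rightarrow$ (ii), and the observation that (ii) and (iii) both entail $|\alpha|\geq|v_n|$) is routine.
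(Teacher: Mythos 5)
Your cycle (iii)$\Rightarrow$(ii)$\Rightarrow$(i)$\Rightarrow$(iii) consists of exactly the three implications the paper proves, and your (i)$\Rightarrow$(iii) and (iii)$\Rightarrow$(ii) match the paper's argument: place $\alpha$, together with the expected occurrence of $v_n$ that follows it in $V$, inside a single expected $v_m$, and use the fact that every $x\neq 1^\Zz$ contains an expected occurrence of $v_m$ whose internal expected structure below level $m$ is inherited verbatim. The trailing-spacer bookkeeping you flag is handled exactly by taking $m$ large enough that $\alpha v_n$ (not just $\alpha$) sits inside the first $v_m$ of $V$.

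Where you diverge is (ii)$\Rightarrow$(i), and there your argument as written has a gap. You locate only $\alpha$ in $V$ and then assert that ``$x$ and $V$ agree on the windows witnessing $x\in E_{n,t}$ and $x\in E_{n,s+1}$.'' But the witnessing words of Proposition~\ref{Evnkfacts}(3) have length up to $C$, which may exceed $\abs{\alpha}$; in particular the window for $E_{n,s+1}$ starts at $s+1$ and lies entirely outside $\alpha$. An occurrence of $\alpha$ in $V$ gives you no control over the symbols of $V$ following that occurrence, so the windows need not transfer. The fix is easy --- locate the longer word $x[t,s+C]$, still a finite subword of $x$ and hence of $V$, so that both windows are carried along --- but note also that Proposition~\ref{Evnkfacts}(3) is stated for points of $X$, not for the one-sided word $V$, so you would still need to justify that the criterion detects expectedness in $V$. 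The paper sidesteps both issues: since $x\in E_{n,t}\cap E_{n,s+1}$, the word $\alpha v_n$ lies inside at most two consecutive expected occurrences of $v_m$ in $x$ (for $\abs{v_m}\geq\abs{\alpha}$), hence inside $v_m1^av_m$ for some spacer $a$ of level at least $m$, with all its occurrences of $v_n$ expected there; an expected occurrence of $v_{m'}$ in $V$ then produces $v_m1^av_m$ inside $V$ with both $v_m$'s expected, and coherence of expectedness finishes the proof. That route transfers expectedness through the hierarchy rather than through local windows and avoids the length issue entirely; you should either adopt it or patch your window argument as indicated.
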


\begin{proof} We show (i)$\Rightarrow$(iii)$\Rightarrow$(ii)$\Rightarrow$(i). 

For (i)$\Rightarrow$(iii), suppose $\alpha$ is an $n$-block. Let $m>n$ be large enough so that the occurrence of $\alpha$ in $V$ is included in the first occurrence of $v_m$ in $V$. Then all the occurrences of $v_n$ in $\alpha$ are expected occurrences in $v_m$. Now let $x\in X\!\setminus\!\{1^\Zz\}$ be arbitrary. Then there is at least one expected occurrence of $v_m$ in $x$. As a subword of $v_m$, $\alpha$ thus occurs in $x$. Let $t$ be the starting position of $\alpha$ and $s$ be the ending position. Then the occurrences of $v_n$ at $t$ and $s+1$ are both expected. Thus $x\in E_{n,t}\cap E_{n,s+1}$.

It is obvious that (iii)$\Rightarrow$(ii).

To see (ii)$\Rightarrow$(i), let $t<s\in\Zz$, $x\in E_{n,t}\cap E_{s+1}$, and $\alpha=x[t,s]$. Let $m$ be large enough that $\abs{v_m}\geq\abs{\alpha}$. Each expected occurrence of $v_n$ in $x$ is contained in exactly one expected occurrence of $v_m$ in $x$, so we can find at most two consecutive expected occurrences of $v_m$ in $x$, with spacers in between, which contain the occurrence of $\alpha v_n$. In particular, $\alpha v_n$ is a subword of $v_m1^av_m$ for some $a=a_{m',i}$ where $m'\geq m$ and $1\leq i<q_{m'}$. Since each expected occurrence of $v_n$ in $\alpha$ comes from an expected occurrence of $v_n$ in $x$, it is still expected in $v_m1^av_m$. Now, we can find an expected occurrence of $v_{m'}$ in $V$, which gives rise to an occurrence of $v_m1^av_m$ as a subword of $V$ where each occurrence of $v_m$ is expected. It follows that all the expected occurrences of $v_n$ in $\alpha v_n$, while are expected within $v_m1^av_m$, are also expected in $V$. This shows that $\alpha$ is an $n$-block.
\end{proof}

The following lemma is an immediate corollary. 
\begin{lemma}\label{vnblockswitness}
Let $(X,T)$ be an infinite rank-one subshift generated by $(v_n)$. Then for any $n\in\Nn$ and $k<l\in \Zz$, we have $E_{n,k}\cap E_{n,l}\neq \emptyset$ iff there is some $n$-block $\alpha$ with $\abs{\alpha}=l-k$.
\end{lemma}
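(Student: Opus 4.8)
The plan is to read both implications off Lemma~\ref{blockequivalence} together with the shift identity $T^{j}(E_{n,m})=E_{n,m-j}$ recorded in Proposition~\ref{Evnkfacts}(\ref{Evnkshift}), and the trivial observation that $1^\Zz$ belongs to no set $E_{n,k}$ (it contains no occurrence of $v_n$ at all).

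For the forward implication, suppose $E_{n,k}\cap E_{n,l}\neq\emptyset$ and fix $x$ in this intersection; then $x\neq 1^\Zz$. Put $t=k$ and $s=l-1$, so $x\in E_{n,t}\cap E_{n,s+1}$, and the word $\alpha:=x[t,s]$ has length $l-k$. By the implication (ii)$\Rightarrow$(i) of Lemma~\ref{blockequivalence}, $\alpha$ is an $n$-block, which is what we want. The only care needed here is the bookkeeping check that $t<s$, i.e.\ that $l-k\ge\abs{v_n}$; this is immediate from the form of the expected decomposition, since two distinct expected occurrences of $v_n$ in $x$ have starting positions at least $\abs{v_n}$ apart.

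For the reverse implication, let $\alpha$ be an $n$-block with $\abs{\alpha}=l-k$, and choose any $x\in X\!\setminus\!\{1^\Zz\}$, which exists because $X$ is infinite. By the implication (i)$\Rightarrow$(iii) of Lemma~\ref{blockequivalence}, there are $t<s\in\Zz$ with $\alpha=x[t,s]$ and $x\in E_{n,t}\cap E_{n,s+1}$; note $s+1-t=\abs{\alpha}=l-k$. Applying $T^{t-k}$ and Proposition~\ref{Evnkfacts}(\ref{Evnkshift}) gives $T^{t-k}(E_{n,t})=E_{n,k}$ and $T^{t-k}(E_{n,s+1})=E_{n,(s+1)-(t-k)}=E_{n,(l-k)+k}=E_{n,l}$. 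Since $T$ is a bijection, $T^{t-k}(x)\in E_{n,k}\cap E_{n,l}$, so this intersection is non-empty.

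I do not expect a substantive obstacle: this really is an immediate corollary of Lemma~\ref{blockequivalence}, and the only things to attend to are the translation between positions inside a fixed word and absolute positions in $\Zz$ (handled by the shift identity), the fact that the points involved are never the fixed point $1^\Zz$, and the minor bookkeeping that a non-empty intersection $E_{n,k}\cap E_{n,l}$ already makes $\abs{l-k}$ large enough for $x[k,l-1]$ to qualify as an $n$-block.
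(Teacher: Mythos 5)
Your proof is correct and is essentially the paper's own argument: both directions are read off Lemma~\ref{blockequivalence} (using (ii)$\Rightarrow$(i) for the forward direction and (i)$\Rightarrow$(ii)/(iii) for the converse) together with the shift identity $T^l(E_{n,k})=E_{n,k-l}$. The only difference is cosmetic — you invoke (i)$\Rightarrow$(iii) where the paper uses (i)$\Rightarrow$(ii), and you add a brief remark on why $t<s$ holds, which the paper leaves implicit.
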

\begin{proof}
Let $k<l$ and suppose $E_{n,k}\cap E_{n,l}\neq \emptyset$. Let $x\in E_{n,k}\cap E_{n,l}$. By Lemma~\ref{blockequivalence} (ii)$\Rightarrow$(i), $\alpha=x[k,l-1]$ is an $n$-block with $\abs{\alpha}=l-k$.

Conversely, if $\alpha$ is an $n$-block with $\abs{\alpha}=l-k$, then by Lemma~\ref{blockequivalence} (i)$\Rightarrow$(ii) we get an $x\in E_{n,t}\cap E_{n,s+1}$ with $\alpha=x[t,s]$. It follows that $l-k=\abs{\alpha}=s-t+1$, and $l=(s+1)-(t-k)$. By Proposition~\ref{Evnkfacts}(\ref{Evnkshift}), $T^{t-k}(x)\in E_{n,k}\cap E_{n,l}$ and so $E_{n,k}\cap E_{n,l}\neq\emptyset$.
\end{proof}

Therefore, by understanding the subwords of $V$ that are $n$-blocks, we can understand which intersections of the $E_{n,k}$ will be empty and thus understand the topological structure of the rank-one subshift.

\subsection{Blocks with bounded spacer parameter}

In this subsection we study the topological structure of a rank-one subshift with bounded spacer parameter. 

We first introduce a piece of notation. For $m\geq n$, let $q_{n}^{(m)}$ denote the number of expected occurrences of $v_n$ within $v_m$. In fact, $q_{n}^{(n)}=1$ and for $m>n$,
$$ q_{n}^{(m)}=\prod_{n\leq n'<m} q_{n'}. $$

The following squence of lemmas study $n$-blocks with $q_{n}^{(m)}$ many expected occurrences of $v_n$.

\begin{lemma}\label{vmsamelength}
Let $m\geq n$. Let $\alpha$ be an $n$-block of the form
$$ \alpha=v_n1^{a_1}v_n1^{a_2}\cdots v_n1^{a_r}, $$
where $r=q_n^{(m)}$. Then there is $1\leq j\leq r$ such that $a_j=a_{m', i}$ for some $m'\geq m$ and $1\leq i<q_{m'}$ and 
$$ v_m=v_n1^{a_{j+1}}\cdots v_n1^{a_r}v_n1^{a_1}\cdots v_n1^{a_{j-1}}v_n. $$
\end{lemma}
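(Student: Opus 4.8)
The plan is to analyze the structure of an $n$-block $\alpha = v_n 1^{a_1} v_n 1^{a_2} \cdots v_n 1^{a_r}$ with exactly $r = q_n^{(m)}$ expected occurrences of $v_n$, and show that it must consist of the $r$ expected occurrences of $v_n$ that make up a single (cyclically shifted) copy of $v_m$, followed by one more spacer block $1^{a_j}$ that connects to the next expected occurrence of $v_m$. The key observation is that each expected occurrence of $v_n$ in $V$ sits inside a unique expected occurrence of $v_m$, and each expected occurrence of $v_m$ contains precisely $q_n^{(m)} = r$ expected occurrences of $v_n$. So the $r$ expected occurrences of $v_n$ comprising $\alpha$ are distributed among (at most two, by consecutiveness) expected occurrences of $v_m$ in $V$.

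First I would fix the occurrence of $\alpha$ inside $V$ witnessing that it is an $n$-block, and take $m'' > m$ large enough that this occurrence lies inside the first occurrence of $v_{m''}$; then all the demonstrated occurrences of $v_n$ in $\alpha$ are expected occurrences in $v_{m''}$, hence fall into a well-defined sequence of consecutive expected occurrences of $v_m$ in $v_{m''}$. Since $\alpha$ begins with an expected occurrence of $v_n$ and $\alpha$ is followed (in $V$) by an expected occurrence of $v_n$, the left end of $\alpha$ is the left end of some expected occurrence of $v_n$ inside some expected occurrence of $v_m$; say it is the $(j{+}1)$-st expected occurrence of $v_n$ within that copy of $v_m$ (with $j = 0$ meaning $\alpha$ starts at the left end of a copy of $v_m$). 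Walking $r$ expected occurrences of $v_n$ forward from there — counting the $r - j$ occurrences that finish off the current copy of $v_m$, then crossing a spacer $1^{a_j}$ (which by the coherence of expected occurrences must be one of the spacer parameters $a_{m',i}$ with $m' \geq m$, $1 \le i < q_{m'}$, since it separates two expected occurrences of $v_m$ in $V$), then the $j$ occurrences that begin the next copy of $v_m$ — lands exactly at the start of the $(j{+}1)$-st expected occurrence of $v_n$ of that next copy of $v_m$, which is where $\alpha$ ends. Reading off the intervening letters gives
$$ \alpha = \big(v_n 1^{a_{j+1}} \cdots v_n 1^{a_r}\big)\, v_n\, 1^{a_j} \big(v_n 1^{a_1} \cdots v_n 1^{a_{j-1}}\big) v_n, $$
and since the first copy of $v_m$ read in $V$ is $v_n 1^{a_{j+1}} \cdots v_n 1^{a_r} v_n 1^{a_1} \cdots v_n 1^{a_{j-1}} v_n$ (it is the tail of one copy followed by the head of the next copy, and two consecutive expected occurrences of $v_m$ are literally two copies of the same word $v_m$), this is exactly the claimed identity. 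The spacer $1^{a_j}$ is the one separating those two copies of $v_m$, so $a_j = a_{m',i}$ for appropriate $m' \geq m$ and $1 \le i < q_{m'}$.

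The main obstacle I anticipate is bookkeeping the indices carefully: making sure the count "$r$ expected occurrences of $v_n$, consecutive, starting mid-$v_m$" forces $\alpha$ to straddle exactly two copies of $v_m$ (not one, not three), and that the "wrap-around" index $j$ is well-defined and in the range $1 \le j \le r$ — in particular handling the boundary case $j = r$ (where $\alpha$ starts at a $v_m$ boundary, the middle spacer is $1^{a_r}$, and the formula reads $v_m = v_n 1^{a_1} \cdots v_n 1^{a_{r-1}} v_n$, consistent with the convention that an empty product of head-terms is vacuous). I would also need to invoke, as in the proof of Lemma~\ref{blockequivalence}, that consecutive expected occurrences of $v_m$ in $V$ really are separated by a genuine spacer parameter $a_{m',i}$ — this follows because the expression of $V$ in terms of expected occurrences of $v_m$ has all its gap-indices drawn from the spacer parameters above level $m$. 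Everything else is routine substitution and length-counting.
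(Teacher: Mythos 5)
Your proof takes essentially the same route as the paper's: place the $r = q_n^{(m)}$ consecutive expected occurrences of $v_n$ into at most two consecutive expected occurrences of $v_m$, observe that exactly one spacer block of $\alpha$ is the gap between those two copies (hence equals some $a_{m',i}$ with $m'\geq m$), and reassemble $v_m$ from the tail of the first copy glued to the head of the second. (The paper runs the argument inside an arbitrary $x\in X$ via Lemma~\ref{blockequivalence} rather than inside $V$ via a large $v_{m''}$, but that is immaterial.)

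The one concrete defect is your displayed decomposition of $\alpha$, which as written contradicts the hypothesis: $\bigl(v_n1^{a_{j+1}}\cdots v_n1^{a_r}\bigr)v_n1^{a_j}\bigl(v_n1^{a_1}\cdots v_n1^{a_{j-1}}\bigr)v_n$ contains $r+1$ occurrences of $v_n$ and ends in $v_n$, whereas $\alpha = v_n1^{a_1}\cdots v_n1^{a_r}$ has exactly $r$ and ends in the spacer block $1^{a_r}$; what you have written down is (essentially) the formula for $v_m1^{a_j}v_m$ restricted to the wrong window, not for $\alpha$. The source is a clash between two uses of $j$. You define $j$ so that $\alpha$ begins at the $(j{+}1)$-st expected occurrence of $v_n$ inside the first copy of $v_m$; with that convention the \emph{first} $r-j$ (not the last $j$) of $\alpha$'s expected occurrences of $v_n$ lie in that first copy, so in $\alpha$'s own labeling the crossing spacer is $1^{a_{r-j}}$, and the comparison of the two identical copies of $v_m$ yields $v_m = v_n1^{a_{r-j+1}}\cdots v_n1^{a_r}v_n1^{a_1}\cdots v_n1^{a_{r-j-1}}v_n$ --- the lemma's conclusion with $j$ replaced by $r-j$. (The paper's $j$ counts how many of $\alpha$'s occurrences of $v_n$ sit in the first copy of $v_m$, which is your $r-j$.) Once the labels are straightened out, and the one-copy degenerate case is read as $\alpha = v_m1^{a_r}$ exactly as you describe, the argument coincides with the paper's.
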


\begin{proof} The lemma is trivial when $m=n$. We assume $m>n$. By Lemma~\ref{blockequivalence} let $x\in X$ contain an occurrence of $\alpha$ in which all expected occurrences of $v_n$ in $\alpha$ are expected in $x$. Since $\alpha$ contains $q_n^{(m)}$ many expected occurrences of $v_n$, the expected occurrences of $v_n$ in $\alpha$ are contained in at most two consecutive expected occurrences of $v_m$ in $x$.

If the expected $v_n$ in $\alpha$ are all contained within one expected occurrence of $v_m$ in $x$, then we necessarily have that $\alpha=v_m1^{a_r}$, where the next expected occurrence of $v_m$ in $x$ is to the immediate right of this occurrence of $\alpha$. It follows that $a_r=a_{m',i}$ for some $m'\geq m$ and $1\leq i<q_{m'}$ and the lemma holds.

If the occurrence of $\alpha$ is contained in two consecutive expected occurrences of $v_m$ with spacers in between, assume $\alpha$ contains $j<r$ expected $v_n$ which are contained in the first expected $v_m$. The second expected occurrence of $v_m$ starts with the $(j+1)$st expected occurrence of $v_n$ in $\alpha$. It follows that $a_j=a_{m',i}$ for some $m'\geq m$ and $1\leq i<q_{m'}$, and we obtain
$$ v_m=v_n1^{a_{j+1}}\cdots v_n1^{a_r}v_n1^{a_1}\cdots v_n1^{a_{j-1}}v_n $$
by a comparison of the two expected occurrences of $v_m$ with the occurrence of $\alpha$. 
\end{proof}

\begin{lemma}\label{singlespacerdifference}
Let $m\leq n$ and let $\alpha$ be an $n$-block with exactly $q_n^{(m)}$ many expected occurrences of $v_n$. Then $\abs{\alpha}-\abs{v_m}=a_{m',i}$ for some $m'\geq m$ and $1\leq i<q_{m'}$. 
\end{lemma}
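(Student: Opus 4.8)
The statement to prove is Lemma~\ref{singlespacerdifference}: if $\alpha$ is an $n$-block with exactly $q_n^{(m)}$ many expected occurrences of $v_n$ (where $m \le n$), then $\abs{\alpha} - \abs{v_m} = a_{m',i}$ for some $m' \ge m$ and $1 \le i < q_{m'}$. The plan is to reduce this directly to Lemma~\ref{vmsamelength}. Write $\alpha = v_n 1^{a_1} v_n 1^{a_2} \cdots v_n 1^{a_r}$ with $r = q_n^{(m)}$; this is the canonical form of an $n$-block with $r$ expected occurrences of $v_n$. (I would first double-check the indexing hypothesis: the statement says $m \le n$, whereas Lemma~\ref{vmsamelength} is stated for $m \ge n$; presumably one of these is a typo and both should read $m \ge n$, since $q_n^{(m)}$ is only defined for $m \ge n$. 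I will proceed under $m \ge n$.)

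First I would invoke Lemma~\ref{vmsamelength} to obtain an index $1 \le j \le r$ with $a_j = a_{m',i}$ for some $m' \ge m$, $1 \le i < q_{m'}$, and
$$ v_m = v_n 1^{a_{j+1}} \cdots v_n 1^{a_r} v_n 1^{a_1} \cdots v_n 1^{a_{j-1}} v_n. $$
Then I would simply count lengths on both sides. The word $\alpha$ has the form $v_n 1^{a_1} \cdots v_n 1^{a_r}$, which consists of $r$ copies of $v_n$ together with $a_1 + \cdots + a_r$ spacer symbols, so $\abs{\alpha} = r\abs{v_n} + \sum_{k=1}^{r} a_k$. On the other hand, the displayed expression for $v_m$ consists of $r$ copies of $v_n$ (the blocks $v_n 1^{a_{j+1}}, \dots, v_n 1^{a_r}, v_n 1^{a_1}, \dots, v_n 1^{a_{j-1}}$, that's $r-1$ copies, plus the trailing $v_n$) together with the spacers $a_{j+1}, \dots, a_r, a_1, \dots, a_{j-1}$, i.e. all of $a_1, \dots, a_r$ except $a_j$. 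Hence $\abs{v_m} = r\abs{v_n} + \sum_{k \ne j} a_k = r\abs{v_n} + \bigl(\sum_{k=1}^r a_k\bigr) - a_j$. Subtracting gives $\abs{\alpha} - \abs{v_m} = a_j = a_{m',i}$, which is exactly the claim.

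I would also handle the edge case $m = n$ separately at the start: then $q_n^{(m)} = q_n^{(n)} = 1$, so $\alpha = v_n 1^{a_1}$ for a single spacer exponent $a_1$, and by the definition of an $n$-block the occurrence of $v_n$ immediately following $\alpha$ in $V$ is expected, which forces $a_1 = a_{n', i}$ for some $n' \ge n$ and $1 \le i < q_{n'}$ (the indices in the $n$-block decomposition come from the spacer parameter at levels $\ge n$); then $\abs{\alpha} - \abs{v_n} = a_1$ as desired. Alternatively one observes Lemma~\ref{vmsamelength} already covers $m = n$ in its first case.

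There is essentially no obstacle here — the lemma is a bookkeeping corollary of Lemma~\ref{vmsamelength}. The only thing requiring a moment of care is making sure the two "rotated" decompositions of $v_m$ and of $\alpha$ are compared correctly: in Lemma~\ref{vmsamelength} the cyclic rotation drops the spacer $a_j$ and keeps all $r$ copies of $v_n$, so the length difference is precisely the missing spacer $a_j$. I would write the length count explicitly (as above) to make this transparent, and then the conclusion is immediate.
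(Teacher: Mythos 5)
Your proof is correct and takes the same route as the paper, which simply notes that the lemma follows immediately from Lemma~\ref{vmsamelength}; your explicit length count $\abs{\alpha}-\abs{v_m}=a_j$ is exactly the bookkeeping the paper leaves implicit. You are also right that the hypothesis ``$m\leq n$'' in the statement is a typo for ``$m\geq n$''.
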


\begin{proof} This follows immediately from Lemma~\ref{vmsamelength}.\end{proof}

\begin{lemma}\label{vklengthinblocks} 
Suppose the spacer parameter is bounded by $B>0$. Let $m\geq n$ such that $\abs{v_n}> B$. Let $\alpha$ be an $n$-block. Suppose $0\leq \abs{\alpha}-\abs{v_m}<\abs{v_n}$. Then $\alpha$ contains exactly $q_n^{(m)}$ many expected occurrences of $v_n$. 
\end{lemma}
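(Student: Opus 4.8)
The plan is to realize $\alpha$ inside a concrete point of $X$ via Lemma~\ref{blockequivalence} and then read off the number of expected occurrences of $v_n$ from the way $\alpha$ sits inside the coherent decomposition of that point into expected occurrences of $v_m$. Write $q=q_n^{(m)}$ and, using Lemma~\ref{blockequivalence}, fix $x\in X\setminus\{1^\Zz\}$ together with $t<s\in\Zz$ such that $\alpha=x[t,s]$ and $x\in E_{n,t}\cap E_{n,s+1}$. Let $r$ be the number of positions $k\in[t,s]$ at which $x$ has an expected occurrence of $v_n$; by the coherence of expected occurrences this equals the number of expected occurrences of $v_n$ in $\alpha$, so it suffices to prove $r=q$.

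Before the main estimate I would record the relevant structure of $x$. Decompose $x$ into its expected occurrences of $v_m$ with spacers in between; since the spacer parameter is bounded by $B<\abs{v_n}$, any two consecutive such occurrences are separated by a run of at most $B<\abs{v_n}$ many $1$s. Writing $v_m=v_n1^{b_1}v_n1^{b_2}\cdots1^{b_{q-1}}v_n$, the $q$ expected occurrences of $v_n$ inside a copy of $v_m$ begin at offsets $0=\sigma_1<\sigma_2<\cdots<\sigma_q$, where $\sigma_{j+1}-\sigma_j=\abs{v_n}+b_j\ge\abs{v_n}$ and $\sigma_q=\abs{v_m}-\abs{v_n}$, because $v_m$ both begins and ends with $v_n$. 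The expected occurrence of $v_n$ at position $t$ is contained in a unique expected occurrence of $v_m$ in $x$, say $V_0$ occupying positions $[p_0,p_0+\abs{v_m}-1]$, and it is the $p$-th one inside $V_0$ for some $1\le p\le q$; thus $t=p_0+\sigma_p$, and in particular $\sigma_p\le\sigma_q=\abs{v_m}-\abs{v_n}$. Let $V_1$ be the next expected occurrence of $v_m$ in $x$ and let $c<\abs{v_n}$ be the length of the spacer between $V_0$ and $V_1$.

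Next I would locate the right endpoint $s+1$, which is the start of an expected occurrence of $v_n$ strictly to the right of $t$. Since $\abs{\alpha}<\abs{v_m}+\abs{v_n}$ and $\sigma_p\ge 0$, this occurrence cannot lie in $V_2$ or further to the right: the first expected $v_n$ of $V_2$ starts at a position $\ge p_0+2\abs{v_m}$, which would force $\abs{\alpha}\ge 2\abs{v_m}-\sigma_p\ge\abs{v_m}+\abs{v_n}$. It cannot lie inside $V_0$ either: if it were the $p''$-th expected $v_n$ of $V_0$ with $p''>p$, then $\abs{\alpha}=\sigma_{p''}-\sigma_p\le\sigma_q-\sigma_p\le\abs{v_m}-\abs{v_n}<\abs{v_m}$, contradicting $\abs{\alpha}\ge\abs{v_m}$. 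Hence $s+1$ is the start of the $p'$-th expected occurrence of $v_n$ inside $V_1$ for some $1\le p'\le q$, i.e. $s+1=p_0+\abs{v_m}+c+\sigma_{p'}$, so that $\abs{\alpha}=\abs{v_m}+c+\sigma_{p'}-\sigma_p$ and $r=(q-p+1)+(p'-1)=q-p+p'$.

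Finally I would pin down $p=p'$ from the two length inequalities. The inequality $\abs{\alpha}<\abs{v_m}+\abs{v_n}$ reads $\sigma_{p'}-\sigma_p<\abs{v_n}-c\le\abs{v_n}$; were $p'>p$, we would have $\sigma_{p'}-\sigma_p\ge\abs{v_n}$, a contradiction, so $p'\le p$. The inequality $\abs{\alpha}\ge\abs{v_m}$ reads $\sigma_{p'}+c\ge\sigma_p$; were $p'<p$, we would have $\sigma_p-\sigma_{p'}\ge\abs{v_n}>B\ge c$, again a contradiction, so $p'\ge p$. Hence $p=p'$ and $r=q=q_n^{(m)}$, as desired. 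The degenerate instance $m=n$ is the case $q=1$, where $V_0$ is a single expected $v_n$ and $p=p'=1$ automatically; the boundary instance in which $s+1$ begins the first expected $v_n$ of $V_1$ is simply $p'=1$, which by the argument above forces $p=1$. I expect the only real subtlety to be bookkeeping around the $V_0/V_1$ boundary — spacers of length $0$ are permitted, and one must verify that $\alpha$ genuinely cannot spill into a third copy of $v_m$ — but both points are controlled uniformly by the single hypothesis $B<\abs{v_n}$ together with $\abs{v_n}\le\abs{v_m}$.
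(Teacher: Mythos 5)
Your proof is correct, but it follows a genuinely different route from the paper's. The paper argues by contradiction: if $\alpha$ had fewer (resp.\ more) than $q_n^{(m)}$ expected occurrences of $v_n$, one could extend (resp.\ shrink) it to an $n$-block $\beta$ with exactly $q_n^{(m)}$ of them, changing the length by at least $\abs{v_n}$; since Lemma~\ref{singlespacerdifference} forces $\abs{\beta}-\abs{v_m}$ to be a spacer value, hence at most $B<\abs{v_n}$, the length estimates clash. You instead argue directly, locating the occurrence of $\alpha$ relative to the decomposition of $x$ into expected occurrences of $v_m$, introducing the offsets $\sigma_1<\dots<\sigma_q$, and showing that the two length constraints $\abs{v_m}\le\abs{\alpha}<\abs{v_m}+\abs{v_n}$ force the endpoints of $\alpha$ to sit at the same offset $p=p'$ in consecutive copies $V_0,V_1$. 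Your argument is longer and requires more bookkeeping (ruling out $V_2$, handling the $V_0/V_1$ boundary), but it is self-contained — it does not use Lemmas~\ref{vmsamelength} or \ref{singlespacerdifference}, and in fact your localization of $\alpha$ between two consecutive copies of $v_m$ essentially re-proves Lemma~\ref{vmsamelength} along the way. Both proofs rest on the same two quantitative facts: consecutive expected occurrences of $v_n$ inside $v_m$ start at least $\abs{v_n}$ apart, while spacers have length at most $B<\abs{v_n}$. The paper's version buys brevity by reusing the earlier lemmas; yours buys transparency about exactly where the endpoints of $\alpha$ can land.
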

\begin{proof} If $m=n$, then it is easy to see that $\alpha$ must contain exactly one expected occurrence of $v_n$. Now let $m>n$ and suppose by contradiction that $\alpha$ does not contain exactly $q_n^{(m)}$ many expected occurrence of $v_n$.

First, assume that $\alpha$ contains fewer than $q_n^{(m)}$ many expected occurrences of $v_n$. By Lemma~\ref{blockequivalence} let $x$ contain an occurrence of $\alpha$ where all expected occurrences of $v_n$ in $\alpha$ are expected in $x$. Then, we can extend $\alpha$ to a subword $\beta$ of $x$ so that $\beta$ is an $n$-block and $\beta$ contains exactly $q_n^{(m)}$ many expected occurrences of $v_n$. Note that $\abs{\beta}\geq \abs{\alpha}+\abs{v_n}\geq \abs{v_m}+\abs{v_n}$.
By Lemma~\ref{singlespacerdifference} and the boundedness of the spacer parameter, we have $\abs{\beta}-\abs{v_m}\leq B$. But then we have $\abs{v_m}+\abs{v_n}\leq \abs{\beta}\leq\abs{v_m}+B$, which would imply $\abs{v_n}\leq B$, a contradiction.

Now suppose $\alpha$ contains more than $q_n^{(m)}$ many expected occurrences of $v_n$. Then we can shrink $\alpha$ to an $n$-block $\gamma$ which contains exactly $q_n^{(m)}$ many expected occurrences of $v_n$. We have $\abs{\gamma}\leq \abs{\alpha}-\abs{v_n}<\abs{v_m}$. However, by Lemma~\ref{singlespacerdifference} $\abs{\gamma}\geq \abs{v_m}$, again a contradiction.
\end{proof}

\begin{lemma}\label{vkmisses}
Suppose the spacer parameter is bounded by $B>0$. Let $m\geq n$ be such that $\abs{v_n}> B$.  Let $0\leq d<\abs{v_n}$. Then we can have $E_{n,0}\cap E_{n,\abs{v_m}+d}\neq \emptyset$ only when $d\leq B$.
\end{lemma}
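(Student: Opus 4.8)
The plan is to reduce the statement to a combinatorial fact about $n$-blocks via Lemma~\ref{vnblockswitness}, and then apply the preceding length lemmas. First I would observe that by Lemma~\ref{vnblockswitness}, $E_{n,0}\cap E_{n,\abs{v_m}+d}\neq\emptyset$ holds precisely when there is an $n$-block $\alpha$ with $\abs{\alpha}=\abs{v_m}+d$. So the task becomes: assuming $\abs{v_n}>B$ and $0\leq d<\abs{v_n}$, show that an $n$-block of length $\abs{v_m}+d$ can only exist when $d\leq B$.

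Next I would invoke Lemma~\ref{vklengthinblocks}: since $\alpha$ is an $n$-block with $0\leq \abs{\alpha}-\abs{v_m}=d<\abs{v_n}$ and $\abs{v_n}>B$, that lemma tells us $\alpha$ contains exactly $q_n^{(m)}$ many expected occurrences of $v_n$. Then Lemma~\ref{singlespacerdifference} applies (note $m\geq n$, so its hypothesis $m\leq n$ in the excerpt appears to be a typo for $m\geq n$, consistent with Lemma~\ref{vmsamelength}) to give $\abs{\alpha}-\abs{v_m}=a_{m',i}$ for some $m'\geq m$ and $1\leq i<q_{m'}$. Hence $d=a_{m',i}\leq B$ by the boundedness of the spacer parameter, which is exactly the desired conclusion.

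The argument is therefore a short chain: translate the topological nonemptiness condition into the existence of an $n$-block of the prescribed length; use the hypothesis $\abs{v_n}>B$ together with $d<\abs{v_n}$ to pin down the number of expected occurrences of $v_n$ inside that block; and then read off that its excess length over $\abs{v_m}$ is a single spacer value, hence at most $B$. I do not expect a genuine obstacle here, since all the heavy lifting has already been done in Lemmas~\ref{singlespacerdifference}, \ref{vklengthinblocks}, and \ref{vnblockswitness}; the only point requiring a little care is confirming the hypotheses of Lemma~\ref{vklengthinblocks} are met (namely $0\leq \abs{\alpha}-\abs{v_m}<\abs{v_n}$, which is immediate from $\abs{\alpha}=\abs{v_m}+d$ and $0\leq d<\abs{v_n}$) and checking that the case $m=n$ is handled correctly, where $q_n^{(n)}=1$ and the block is forced to be $v_n$ itself, giving $d=0\leq B$ trivially.
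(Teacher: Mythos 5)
Your proof is correct and follows exactly the paper's own argument: Lemma~\ref{vnblockswitness} to produce an $n$-block of length $\abs{v_m}+d$, Lemma~\ref{vklengthinblocks} to pin down $q_n^{(m)}$ expected occurrences of $v_n$, and Lemma~\ref{singlespacerdifference} to conclude $d\leq B$. Your observation that the hypothesis ``$m\leq n$'' in Lemma~\ref{singlespacerdifference} is a typo for ``$m\geq n$'' is also correct.
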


\begin{proof} Suppose $E_{n,0}\cap E_{n,\abs{v_m}+d}\neq\emptyset$. By Lemma~\ref{vnblockswitness} there is some $n$-block $\alpha$ with $\abs{\alpha}=\abs{v_m}+d$. By Lemma~\ref{vklengthinblocks}, $\alpha$ contains exactly $q_n^{(m)}$ many expected occurrences of $v_n$. Finally by Lemma~\ref{singlespacerdifference}, $d\leq B$.
\end{proof}

\subsection{Blocks of different lengths} In this subsection we study the possible differences between lengths of $n$-blocks. 

We will use the following technical lemma iteratively in our constructions. 

\begin{lemma}\label{mainconstruction}
Let $m\geq n$ and let $\alpha$ be an $n$-block of the form
$$ \alpha=v_n1^{a_1}\cdots v_n1^{a_r}, $$
where $r\leq q_n^{(m)}$. Suppose $1\leq j\leq r$ is such that $a_j=a_{m',i'}$ for some $m'\geq m$ and $1\leq i'<q_{m'}$. Letting
$$\alpha_0=v_n1^{a_1}\cdots v_n1^{a_{j-1}}v_n \mbox{ and }$$
$$\alpha_1=v_n1^{a_{j+1}}\cdots v_n1^{a_r}, $$
suppose $\alpha_0$ is an end segment of $v_m$ and $\alpha_1$ is an initial segment of $v_m$.
Then for any $1\leq i_0\leq i_1<q_m$, $\bar{m}\geq m+1$, and $1\leq \bar{\imath}<q_{\bar{m}}$, the following word $\beta$ is an $n$-block:
$$ \beta=\alpha_0\gamma_1 1^{a_{\bar{m},\bar{\imath}}} \gamma_0\alpha_1, $$
where
$$
\gamma_0= v_m1^{a_{m,1}}v_m\cdots v_m1^{a_{m,i_0-1}}, \mbox{ and} $$
$$ \gamma_1= 1^{a_{m,i_1+1}}v_m\cdots v_m1^{a_{m,q_m-1}}v_m.
$$
Moreover, $\beta$ contains at most $q_n^{(m+1)}$ many expected occurrences of $v_n$.
\end{lemma}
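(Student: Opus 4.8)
The plan is to exhibit $\beta$ explicitly as a subword of $V$ in which all the displayed occurrences of $v_n$ are expected, using the hypotheses to "reassemble" $\beta$ from pieces of genuine expected occurrences of $v_m$ and $v_{m+1}$ in $V$. The starting point is that $\alpha$ is an $n$-block with $r \le q_n^{(m)}$ expected occurrences of $v_n$, together with a chosen index $j$ with $a_j = a_{m',i'}$, $m'\ge m$; combined with the hypotheses that $\alpha_0$ is an end segment of $v_m$ and $\alpha_1$ an initial segment of $v_m$, this says precisely that $v_m 1^{a_j} v_m$ has the form $(\cdots\alpha_0)1^{a_j}(\alpha_1\cdots)$, i.e., $\alpha$ sits across (at most) two consecutive expected $v_m$'s inside some expected $v_{m'+1}$, hence inside $V$. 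So the first step is to write down the relevant stretch of $V$ and identify $\alpha_0$ and $\alpha_1$ inside it with their occurrences of $v_n$ all expected.

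The second step is to build the "insert." Fix $\bar m \ge m+1$ and $1\le\bar\imath<q_{\bar m}$. In $V$ there is an expected occurrence of $v_{\bar m}1^{a_{\bar m,\bar\imath}}v_{\bar m}$ in which every sub-occurrence of $v_m$ (and hence of $v_n$) is expected; here $v_{\bar m}$ is a concatenation of copies of $v_{m+1}$ with spacers, and each $v_{m+1} = v_m 1^{a_{m,1}} v_m \cdots v_m 1^{a_{m,q_m-1}} v_m$. Now observe that $\gamma_1 1^{a_{\bar m,\bar\imath}} \gamma_0$, read with $\gamma_1 = 1^{a_{m,i_1+1}}v_m\cdots v_m 1^{a_{m,q_m-1}}v_m$ and $\gamma_0 = v_m 1^{a_{m,1}}v_m\cdots v_m 1^{a_{m,i_0-1}}$, is exactly the stretch of $V$ that runs from just after the $i_1$-th $v_m$ of one copy of $v_{m+1}$ inside the first $v_{\bar m}$, through the spacer $1^{a_{\bar m,\bar\imath}}$, and into the first $i_0-1$ gaps' worth of the next $v_{m+1}$ inside the second $v_{\bar m}$ — provided the relevant copy of $v_{m+1}$ is the last one in the first $v_{\bar m}$ and the first one in the second $v_{\bar m}$. (If $\bar m = m+1$ this is automatic; for $\bar m > m+1$ one uses that $v_{\bar m}$ begins and ends with $v_{m+1}$, which follows from the generating recursion, so such positions genuinely occur.) Thus $\gamma_1 1^{a_{\bar m,\bar\imath}}\gamma_0$ occurs in $V$ with all its $v_m$'s — and therefore all its $v_n$'s — expected.

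The third step is to splice: the right end of $\alpha_0$, being an end segment of $v_m$, matches the right end of the $i_1$-th $v_m$ that immediately precedes $\gamma_1$ in the picture above (choose $i_1$ so this $v_m$ is the one just before the gap $1^{a_{m,i_1+1}}$); and the left end of $\alpha_1$, being an initial segment of $v_m$, matches the left end of the $i_0$-th $v_m$ that immediately follows $\gamma_0$. Concretely, $\alpha_0\gamma_1$ is an end segment of $v_m 1^{a_{m,i_1+1}}v_m\cdots 1^{a_{m,q_m-1}}v_m$ hence of $v_{m+1}$, and $\gamma_0\alpha_1$ is an initial segment of $v_m 1^{a_{m,1}}v_m\cdots 1^{a_{m,i_0-1}}v_m\cdots$ hence of $v_{m+1}$; so $\alpha_0\gamma_1 1^{a_{\bar m,\bar\imath}}\gamma_0\alpha_1 = \beta$ occurs inside $v_{\bar m}1^{a_{\bar m,\bar\imath}}v_{\bar m}$, which occurs in $V$, with every displayed $v_n$ expected. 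By the definition of $n$-block (or by Lemma~\ref{blockequivalence}) this shows $\beta$ is an $n$-block. Finally, for the count: $\beta$ is contained in two consecutive expected occurrences of $v_{m+1}$ together with the single spacer $1^{a_{\bar m,\bar\imath}}$ between them, and each expected $v_{m+1}$ contains exactly $q_n^{(m+1)}/q_{?}$... more simply, $\beta$ lies within $v_{m+1}1^{a_{\bar m,\bar\imath}}v_{m+1}$ minus nonempty end/initial pieces on each side, so it meets at most $q_n^{(m+1)}$ expected occurrences of $v_n$; I would make this precise by noting $\abs{\beta} < 2\abs{v_{m+1}}$ together with the fact that $v_{m+1}$ starts and ends with $v_n$, and then invoking the uniqueness of the expected decomposition.

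The main obstacle I expect is bookkeeping the boundaries correctly — making sure the chosen $i_0,i_1$ line up $\alpha_0$ with the tail of a specific $v_m$ and $\alpha_1$ with the head of a specific $v_m$, and that for $\bar m>m+1$ there really is an occurrence of $v_{m+1}$ that is simultaneously the last $v_{m+1}$-block of one $v_{\bar m}$ and (for the next $v_{\bar m}$) the first $v_{m+1}$-block — i.e., that $v_{\bar m}$ both begins and ends with $v_{m+1}$. This is exactly the already-noted structural fact that every $v_k$ is both an initial and an end segment of $v_{k+1}$, iterated; once that is cited cleanly, the rest is concatenation matching, and the "at most $q_n^{(m+1)}$" count follows from the length bound and the uniqueness of expected decompositions established in \cite{GaoHill}.
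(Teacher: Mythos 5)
Your construction of $\beta$ is essentially the paper's own argument: $\alpha_0\gamma_1$ is an end segment of $v_{m+1}$, $\gamma_0\alpha_1$ is an initial segment of $v_{m+1}$, hence $\beta$ sits inside $v_{m+1}1^{a_{\bar m,\bar\imath}}v_{m+1}\subseteq v_{\bar m}1^{a_{\bar m,\bar\imath}}v_{\bar m}\subseteq v_{\bar m+1}$ with all displayed occurrences expected, so $\beta$ is an $n$-block. That part is correct and needs no change.

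The gap is in the ``moreover'' clause. Your proposed justification --- that $\abs{\beta}<2\abs{v_{m+1}}$ (or that $\beta$ omits nonempty initial/end pieces of the two copies of $v_{m+1}$) forces at most $q_n^{(m+1)}$ expected occurrences of $v_n$ --- does not work: a word obtained from $v_{m+1}1^{a}v_{m+1}$ by deleting a nonempty piece at each end can still contain nearly $2q_n^{(m+1)}$ expected occurrences of $v_n$ (take the deleted pieces to be single letters), and its length need not exceed $2\abs{v_{m+1}}$ when $a$ is small. The bound genuinely requires the hypothesis $i_0\leq i_1$, which your count never invokes. The correct tally is direct: $\gamma_1$ contains $q_m-i_1$ expected occurrences of $v_m$ and $\gamma_0$ contains $i_0-1$ of them, so together at most $(q_m-i_1)+(i_0-1)\leq q_m-1$ expected occurrences of $v_m$, each contributing $q_n^{(m)}$ expected occurrences of $v_n$; adding the $r\leq q_n^{(m)}$ occurrences coming from $\alpha_0$ and $\alpha_1$ gives at most $(q_m-1)q_n^{(m)}+r\leq q_mq_n^{(m)}=q_n^{(m+1)}$. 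This count is not a cosmetic point: it is exactly what lets the lemma be applied iteratively in Propositions~\ref{onegap} and \ref{updownachievable}, where each output block must again satisfy the hypothesis $r\leq q_n^{(m)}$ at the next level.
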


\begin{proof}
By our assumption, $\alpha_0\gamma_1$ is in fact a subword of 
$$ v_m1^{a_{m,i_1+1}}v_m\cdots v_m1^{a_{m,q_m-1}}v_m, $$
which is an end segment of $v_{m+1}$, and $\gamma_0\alpha_1$ is a subword of
$$ v_m1^{a_{m,1}}v_m\cdots v_m1^{a_{m,i_0-1}}v_m, $$
which is an initial segment of $v_{m+1}$. Thus $\beta$ is a subword of $v_{m+1}1^{a_{\bar{m},\bar{\imath}}}v_{m+1}$, which is in turn a subword of $v_{\bar{m}+1}$. This implies that $\beta$ is an $n$-block. 

Now the sum of the numbers of expected occurrences of $v_m$ in $\beta_0$ and $\beta_1$ is at most $q_m-1$, and thus the total number of expected occurrences of $v_n$ in $\beta$ is at most
$$ (q_m-1)q_n^{(m)}+r\ \leq \ q_mq_n^{(m)}\ =\ q_n^{(m+1)}. $$
\end{proof}

This lemma can be viewed as an inductive step in an iterative construction. In fact, we start with the $n$-block $\alpha=\alpha_0 1^{a_j}\alpha_1$ and construct the $n$-block $\beta=\beta_0 1^{a_{\bar{m},\bar{\imath}} }\beta_1$ if we let $\beta_0=\alpha_0\gamma_1$ and $\beta_1=\gamma_0\alpha_1$. Each time we replace the distinguished spacer parameter by spacer parameter of a higher level.

In the following proposition we state a sufficient condition for all numbers to be possible differences between lengths of $n$-blocks.

\begin{proposition}\label{onegap}
Let $n\in\Nn$. Suppose for infinitely many $m$, there are $1\leq i, j<q_m$ with $a_{m,i}-a_{m,j}=1$. Then for any $h\in \Nn$ there are $n$-blocks $\alpha$, $\beta$ such that $\abs{\alpha}-\abs{\beta}=h$. Moreover, for any $n$-block $\gamma$ and $h\in\Nn$ there are $n$-blocks $\alpha$, $\beta$ such that $\gamma$ is an initial segment of both $\alpha$ and $\beta$, and $\abs{\alpha}-\abs{\beta}=h$.
\end{proposition}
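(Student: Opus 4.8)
The plan is to use Lemma~\ref{mainconstruction} iteratively, each iteration producing a length change of exactly $1$, so that after $h$ iterations we accumulate a length difference of $h$. First I would fix $n$ and observe that it suffices to prove the ``moreover'' statement, since taking $\gamma = v_n$ recovers the first statement. So, given an $n$-block $\gamma$ and a target $h\in\Nn$, I want to produce two $n$-blocks $\alpha,\beta$ with $\gamma$ as a common initial segment and $\abs{\alpha}-\abs{\beta}=h$. The idea is to first enlarge $\gamma$ to an $n$-block $\alpha^{(0)}$ that contains a \emph{distinguished spacer} $1^{a_j}$ with $a_j = a_{m',i'}$ for some $m'\ge m$, and such that the portion of $\alpha^{(0)}$ to the left of that spacer is an end segment of some $v_m$ and the portion to the right is an initial segment of that same $v_m$ (this is exactly the hypothesis format of Lemma~\ref{mainconstruction}); we may arrange that $\gamma$ sits inside the left part, since $\gamma$ is itself a subword of some $v_M$, and $v_M$ in turn is an end segment — equivalently appears as an expected occurrence near the start — of a higher level word. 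Concretely, embed $\gamma$ into an expected occurrence inside $v_{m+1}$ for large $m$, so that $v_{m+1}$ decomposes as $v_m 1^{a_{m,1}} v_m \cdots 1^{a_{m,q_m-1}} v_m$, and choose $\alpha^{(0)}$ to be the $n$-block running from the start of the $v_m$-block containing $\gamma$ up to (but not including) the next expected $v_m$; then $\alpha^{(0)} = v_m 1^{a_{m,1}} v_m \cdots v_m 1^{a_{m,i_1}}$ type form with the last spacer distinguished.

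The engine is this: Lemma~\ref{mainconstruction} lets me replace a distinguished spacer $1^{a_j}$ (with $a_j = a_{m',i'}$, $m'\ge m$) by a block of the form $\gamma_1 1^{a_{\bar m,\bar\imath}}\gamma_0$, where $\gamma_0,\gamma_1$ are built from copies of $v_m$ and spacers $a_{m,i}$, and crucially I get to \emph{choose} which spacers $a_{m,i_0},\dots,a_{m,i_1}$ of level $m$ are omitted from the middle. By the hypothesis of the proposition, for infinitely many $m$ there are indices $i,j < q_m$ with $a_{m,i} - a_{m,j} = 1$; pick such an $m$ (as large as needed). Now applying Lemma~\ref{mainconstruction} to the \emph{same} $\alpha_0,\alpha_1$ but with two different choices of the omitted range — one choice omitting the slot carrying $a_{m,i}$ and another omitting the slot carrying $a_{m,j}$ instead, keeping everything else identical — produces two $n$-blocks $\beta,\beta'$ that are letter-for-letter identical except that one contains the spacer $1^{a_{m,i}}$ where the other contains $1^{a_{m,j}}$. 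Hence $\abs{\beta} - \abs{\beta'} = a_{m,i} - a_{m,j} = 1$. Both have $\alpha_0$, hence $\gamma$, as an initial segment.

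To get difference $h$ rather than $1$, I iterate. After one step I have $n$-blocks $\beta_1, \beta_1'$ differing in length by $1$; note that by the ``moreover'' clause of Lemma~\ref{mainconstruction} the new block $\beta = \beta_0 1^{a_{\bar m,\bar\imath}} \beta_1$ again exhibits a distinguished spacer $1^{a_{\bar m,\bar\imath}}$ with $\bar m \ge m+1$, so the construction can be restarted on $\beta_1$ (now playing the role of $\alpha$), choosing the next level $m_2 > \bar m$ with the ``$a_{m_2,i}-a_{m_2,j}=1$'' property — such a level exists because infinitely many $m$ have this property. Doing this $h$ times, I build a chain: at each stage I keep $\gamma$ as an initial segment, and at each stage I can elect either to ``split'' the current distinguished spacer in the ``long'' way or the ``short'' way; choosing ``long'' $h$ times for $\alpha$ and ``short'' $h$ times for $\beta$ (otherwise making identical choices) gives $\abs{\alpha} - \abs{\beta} = h$. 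One should track that the number of expected $v_n$-occurrences stays finite — the ``moreover'' in Lemma~\ref{mainconstruction} bounds it by $q_n^{(m+1)}$ at each stage, so there is no problem.

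The main obstacle I anticipate is bookkeeping the decomposition hypotheses of Lemma~\ref{mainconstruction} across iterations: after one application the distinguished spacer is at level $\bar m \ge m+1$, but to reapply the lemma I need the new left part to be an end segment of $v_{\bar m}$ (or some $v_{m_2}$ with $m_2 \le$ the new distinguished level) and the new right part an initial segment of the same word. This requires choosing the ``omitted range'' $[i_0,i_1]$ at each step so that the resulting $\gamma_0$, when concatenated appropriately, still lands as an initial segment of a single $v_{m_2}$, and symmetrically for the end-segment condition; the cleanest way is probably to always take $i_0 = 1$ (so $\gamma_0$ is empty) or $i_1 = q_m - 1$ (so $\gamma_1$ is empty), forcing the split to happen cleanly at a $v_{m+1}$-boundary, and to verify by induction that the ``end segment of $v_m$ / initial segment of $v_m$'' invariant is maintained. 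Once that invariant is set up correctly the rest is a routine induction on $h$.
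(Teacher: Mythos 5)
Your proposal is, in its engine, exactly the paper's proof: iterate Lemma~\ref{mainconstruction} $h$ times, at each stage choosing a fresh level $m_k$ that carries a pair $a_{m_k,i_k},a_{m_k,j_k}$ differing by $1$, and building $\alpha_{k+1}$ by omitting the slot $i_k$ while building $\beta_{k+1}$ by omitting the slot $j_k$ (with the same $\bar m_k,\bar\imath_k$ otherwise), so that each stage adds exactly $1$ to $\abs{\alpha_{k+1}}-\abs{\beta_{k+1}}$. That is correct and is what the paper does.

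Three slips in the write-up should be repaired. First, for the ``moreover'' clause the left piece $\alpha_0$ of the base block must have $\gamma$ as an \emph{initial segment}: take $\alpha_0$ to be the end segment of $v_{m_0}$ beginning at the chosen occurrence of $\gamma$ (as the paper does), not the block starting at the beginning of the $v_{m_0}$ containing $\gamma$ --- with your choice $\gamma$ is only a subword, not an initial segment, of every subsequent block. Second, your quantifier order at the iteration step is reversed: you pick $\bar m$ and then the next gap-one level $m_2>\bar m$, but applying Lemma~\ref{mainconstruction} at level $m_2$ requires the distinguished spacer to come from a level $\geq m_2$, i.e.\ $\bar m\geq m_2$; one should fix the next gap-one level $m_{k+1}>m_k$ first and then choose $\bar m_k\geq m_{k+1}$ (this is harmless since $\bar m_k$ is a free parameter). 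Third, the obstacle you raise in the last paragraph is not actually there, and the proposed remedy would break the argument: for \emph{any} $1\leq i_0\leq i_1<q_m$ the proof of Lemma~\ref{mainconstruction} already shows that $\alpha_0\gamma_1$ is an end segment of $v_{m+1}$ and $\gamma_0\alpha_1$ is an initial segment of $v_{m+1}$, so the decomposition invariant propagates automatically; whereas forcing $i_0=1$ or $i_1=q_m-1$ would prevent you from omitting precisely the designated slots $i_k$ and $j_k$, which is the whole point of each step.
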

\begin{proof} We define $\alpha_k$ and $\beta_k$ by induction on $0\leq k\leq h$ so that $\abs{\alpha_k}-\abs{\beta_k}=k$. For $k=0$, let $m_0>n$ so that there are $1\leq i_0,j_0<q_{m_0}$ with $a_{m_0,j_0}-a_{m_0, i_0}=1$. Let $\alpha_0=\beta_0$ be an $n$-block with $q_n^{(m_0)}$ many expected occurrences of $v_n$. By Lemma~\ref{vmsamelength} $\alpha_0$ satisfies the assumption of Lemma~\ref{mainconstruction}, i.e., $\alpha_0=\alpha_{0,0}1^{a_j}\alpha_{0,1}$, where $\alpha_{0,0}$ is an end segment of $v_{m_0}$ and $\alpha_{0,1}$ is an initial segmant of $v_{m_0}$. 

In general, assume $\alpha_k$ and $\beta_k$ have been defined and they satisfy the assumption of Lemma~\ref{mainconstruction} for some $m_k>m_{k-1}$ such that there are
$1\leq i_k,j_k<q_{m_k}$ with $a_{m_k,j_k}-a_{m_k,i_k}=1$. Arbitrarily pick $\bar{m}_k\geq m_k+1$ and $1\leq \bar{\imath}_k<q_{\bar{m}_k}$. Construct $\alpha_{k+1}$ from $\alpha_k$ by applying Lemma~\ref{mainconstruction} using $a_{\bar{m}_k, \bar{\imath}_k}$ and with the $i_0,i_1$ of Lemma~\ref{mainconstruction} both set as $i_k$. Similarly, construct $\beta_{k+1}$ from $\beta_k$ by applying Lemma~\ref{mainconstruction} also using $a_{\bar{m}_k,\bar{\imath}_k}$ but with the $i_0, i_1$ of Lemma~\ref{mainconstruction} both set as $j_k$ instead. Then $\alpha_{k+1}$ and $\beta_{k+1}$ are $n$-blocks, and a comparison of their lenths shows that $\abs{\alpha_{k+1}}-\abs{\beta_{k+1}}=\abs{\alpha_k}-\abs{\beta_k}+a_{m_k,j_k}-a_{m_k,i_k}=\abs{\alpha_k}-\abs{\beta_k}+1$. To finish the inductive step, pick $m_{k+1}>m_k$ so that there are $1\leq i_{k+1}, j_{k+1}<q_{m_{k+1}}$ with $a_{m_{k+1}, j_{k+1}}-a_{m_{k+1},i_{k+1}}=1$. 

This finishes the indutive definition of $\alpha_k$ and $\beta_k$. The first part of the proposition is proved with $\alpha=\alpha_h$ and $\beta=\beta_h$. 

For the second part of the proposition, let $\gamma$ be any $n$-block. In the definition of $\alpha_0=\beta_0$ above we let $m_0$ be large enough so that $\gamma$ occurs in $v_{m_0}$ with all of its $r$ many expected occurrences of $v_n$ occur also expected in $v_{m_0}$. Let $\alpha_{0,0}$ be the end segment of $v_{m_0}$ starting with this occurrence of $\gamma$. Let $\alpha_{0,1}$ be the intitial segment of $v_{m_0}$ with $\alpha_{0,1}\alpha_{0,0}=v_{m_0}$. Let $a_j=a_{m_0,i_0}$ for any $1\leq i_0<q_{m_0}$. By Lemma~\ref{mainconstruction}, each $\alpha_k$ or $\beta_k$ has $\alpha_{0,0}$ as an initial segment, and therefore also has $\gamma$ as an initial segment.
\end{proof}

We can generalize this result by telescoping as follows.

\begin{proposition}\label{onegapdifferentlevels}
Let $n\in\Nn$. Suppose that for any $N$ there are $m,k\geq N$ and $1\leq i<q_{m}, 1\leq j<q_{k}$ with $a_{m, i}-a_{k,j}=1$. Then for any $h\in \Nn$ there are $n$-blocks $\alpha$, $\beta$ such that $\abs{\alpha}-\abs{\beta}=h$. Moreover, for any $n$-block $\gamma$ and $h\in\Nn$ there are $n$-blocks $\alpha$, $\beta$ such that $\gamma$ is an initial segment of both $\alpha$ and $\beta$, and $\abs{\alpha}-\abs{\beta}=h$.
\end{proposition}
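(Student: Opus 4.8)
The plan is to reduce Proposition~\ref{onegapdifferentlevels} to Proposition~\ref{onegap} by telescoping, i.e. by passing to a subsequence of the levels so that the ``difference-one across two (possibly different) levels'' hypothesis becomes a ``difference-one within a single level'' hypothesis. First I would observe that for the purposes of studying $n$-blocks, nothing changes if we telescope the generating sequence: given any strictly increasing sequence $n = m_0 < m_1 < m_2 < \cdots$, the words $(v_{m_l})_{l\geq 0}$ again form a rank-one generating sequence for the same subshift $(X,T)$, where the new cutting parameter at level $l$ is $q_{m_l}^{(m_{l+1})}$ and the new spacer parameters at level $l$ are exactly the numbers $a_j$ appearing in the expansion $v_{m_{l+1}} = v_{m_l}1^{a_1}v_{m_l}1^{a_2}\cdots v_{m_l}1^{a_t}v_{m_l}$; by the coherence of expected occurrences noted in the excerpt, the notion of ``$n$-block'' is unchanged. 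So it suffices to choose the telescoping cleverly.

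The key step is the choice of telescoping. By the hypothesis, for every $N$ there are $m,k\geq N$, $1\leq i<q_m$, $1\leq j<q_k$ with $a_{m,i}-a_{k,j}=1$; write $a = a_{k,j}$ and $a+1 = a_{m,i}$, and put $M = \max(m,k)$. I would then pick the telescoping levels $m_0 < m_1 < m_2 < \cdots$ greedily: having chosen $m_l$, apply the hypothesis with $N = m_l+1$ to obtain such a pair at levels below some $M_l$, and set $m_{l+1} > M_l$. Then in the telescoped generating sequence, between level $l$ and level $l+1$ the spacer values include both $a$ and $a+1$ (they occur among the $a_j$'s in the expansion of $v_{m_{l+1}}$ in terms of $v_{m_l}$, since they occur in $v_{M_l+1}$ which is a subword of $v_{m_{l+1}}$, and the occurrences of $v_{m_l}$ flanking those spacers in $v_{m_{l+1}}$ are expected). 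Hence, after telescoping, we are in a generating sequence satisfying: for infinitely many (indeed all) levels $m'$, there are $1\leq i',j'<q_{m'}$ (new cutting parameters) with $a_{m',i'}-a_{m',j'}=1$. That is exactly the hypothesis of Proposition~\ref{onegap} applied to the telescoped sequence, with the same base level $n = m_0$. Applying Proposition~\ref{onegap} to the telescoped system gives, for any $h\in\Nn$, $n$-blocks $\alpha,\beta$ with $|\alpha|-|\beta|=h$, and with the refinement that any prescribed $n$-block $\gamma$ can be taken as a common initial segment; since ``$n$-block'' has the same meaning in both systems, this is the conclusion we want.

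I expect the main obstacle to be the bookkeeping in the telescoping step: one must check carefully that the spacer values $a$ and $a+1$ witnessed by the hypothesis at levels $m,k$ below $M_l$ genuinely survive as spacer parameters \emph{between consecutive telescoped levels} $m_l$ and $m_{l+1}$, with the flanking copies of $v_{m_l}$ being expected occurrences. The point is that any spacer $1^{a_{m,i}}$ (with $m_l < m \leq m_{l+1}$, say, after arranging $m_l < k \leq m < m_{l+1}$ by possibly enlarging $m_{l+1}$) sits between two expected occurrences of $v_m$, hence between two expected occurrences of $v_{m_l}$, inside $v_{m_{l+1}}$; so it contributes a spacer value $a_{m,i}$ to the expansion of $v_{m_{l+1}}$ over $v_{m_l}$. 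There is a minor asymmetry — the hypothesis allows the two witnessing levels $m,k$ to differ — but both are bounded by $M_l$, so both lie in the open interval $(m_l, m_{l+1})$ once we take $m_{l+1}$ large enough, and both contribute their spacer values to the level-$l$-to-level-$(l+1)$ expansion. Everything else is a direct citation of Proposition~\ref{onegap}.
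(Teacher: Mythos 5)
Your proposal is correct and follows essentially the same route as the paper: telescope the generating sequence so that both witnessing levels $m,k$ fall strictly between consecutive telescoped levels, observe that their spacer values then appear as spacer parameters of a single level of the telescoped sequence, note that $n$-blocks are the same for both sequences, and invoke Proposition~\ref{onegap}. Your extra bookkeeping about expected occurrences surviving the telescoping is sound and matches what the paper leaves implicit.
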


\begin{proof} Let $(v_t)$ be the rank-one generating sequence. For all $l\in\Nn$, let $n<N_l<m_l,k_l<N_{l+1}$ be such that there are $1\leq i<q_{m_l}$ and $1\leq j<q_{k_l}$ with $a_{m_l,i}-a_{k_l,j}=1$. Then we consider a new rank-one generating sequence $(w_t)$ where $w_t=v_t$ for all $t\leq N_0$ and $w_{N_0+s}=v_{N_s}$ for all $s\in\Nn$. Then each of the spacer parameter for $(v_t)$ on the $m_l, k_l$ levels appears as a spacer parameter for $(w_t)$ on the level $N_0+l$. Thus the assumption of Proposition~\ref{onegap} is satisfied for $(w_t)$. Now if $\alpha$ and $\beta$ are $n$-blocks for $(w_t)$ then they are still $n$-blocks for $(v_t)$. Thus the proposition is proved.
\end{proof}

Next we turn to the more general case for the spacer parameter. For this we introduce a new number-theoretic concept.

\begin{definition}
Let $\{a_1,\dots,a_l\}$ be a finite set of natural numbers with at least two distinct elements. We define the {\em up-down gcd} to be the minimum value $d\geq 1$ achievable by a sum of the form $\sum_{i=1}^I (a_{i,+}- a_{i,-})$ for some $I\in \Nn$, where each $a_{i,+},a_{i,-}\in \{a_1,\dots,a_l\}$.
\end{definition}


It is obvious that the gcd of $\{a_1,\dots, a_l\}$ is a factor of the up-down gcd of $\{a_1,\dots, a_l\}$. The following lemma gives a characterization of the up-down gcd.

\begin{lemma}\label{updowngcd}
Let $\{a_1,\dots,a_l\}$ be a finite set of natural numbers with at least two distinct elements. Let $\{b_1,\dots,b_h\}$ be the set of differences from $\{a_1,\dots, a_l\}$, i.e. the $b_i$ are all values of the form $\abs{a_j-a_j'}$ for $1\leq j\neq j'\leq l$. Then the up-down gcd of $\{a_1,\dots,a_l\}$ is the gcd of $\{b_1,\dots,b_h\}$. 
\end{lemma}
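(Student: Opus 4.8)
The plan is to prove the two inequalities "up-down gcd $\leq \gcd\{b_1,\dots,b_h\}$" and "up-down gcd $\geq \gcd\{b_1,\dots,b_h\}$" separately, using only elementary number theory about the set of integer combinations. Write $d$ for the up-down gcd and $g=\gcd\{b_1,\dots,b_h\}$. First I would record the key structural fact: every sum of the form $\sum_{i=1}^I(a_{i,+}-a_{i,-})$ can, after cancellation and regrouping, be rewritten as a $\Zz$-linear combination $\sum_{j} c_j b_j$ of the pairwise differences; conversely every such combination arises this way (an integer multiple $c b_j$ of a single difference $b_j=a_{k}-a_{k'}$ is realized by repeating the pair $(a_k,a_{k'})$ exactly $c$ times if $c>0$, or $(a_{k'},a_k)$ exactly $|c|$ times if $c<0$, and sums of these concatenate). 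So the set of values $\{\sum_{i=1}^I(a_{i,+}-a_{i,-}) : I\in\Nn,\ a_{i,\pm}\in\{a_1,\dots,a_l\}\}$ is \emph{exactly} the subgroup $g\Zz$ of $\Zz$ generated by $b_1,\dots,b_h$ — here I use Bézout: $\gcd\{b_1,\dots,b_h\}\Zz = b_1\Zz+\cdots+b_h\Zz$.

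Granting that, the conclusion is immediate: $d$ is by definition the least element of this value set that is $\geq 1$. Since the value set equals $g\Zz$, its smallest positive element is $g$ itself, hence $d=g$. I would present this as: $g$ is a value in the set (it lies in $g\Zz$), so $d\leq g$; and $d$ is a positive value in the set, hence $d\in g\Zz$, hence $g\mid d$, hence $g\leq d$. Combining gives $d=g$. Note the set $\{a_1,\dots,a_l\}$ has at least two distinct elements, so the difference set is nonempty and $g$ is a well-defined positive integer, and the value set does contain positive elements, so $d$ is well-defined too.

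The only point requiring a little care — and the one I would expect to be the main obstacle, though it is still routine — is the bookkeeping in the first step: showing that an arbitrary up-down sum $\sum_{i=1}^I(a_{i,+}-a_{i,-})$ really is a $\Zz$-combination of pairwise differences, and vice versa. For the forward direction one can argue by induction on $I$: a single term $a_{i,+}-a_{i,-}$ is either $0$ (if $a_{i,+}=a_{i,-}$) or $\pm b_j$ for the appropriate $j$, and sums of $\Zz$-combinations of the $b_j$ are again such combinations. For the reverse direction one takes a combination $\sum_j c_j b_j$, and for each $j$ with $b_j=a_{k_j}-a_{k'_j}$ appends $|c_j|$ copies of the pair $(a_{k_j},a_{k'_j})$ or $(a_{k'_j},a_{k_j})$ according to the sign of $c_j$; concatenating over $j$ gives an up-down sum equal to $\sum_j c_j b_j$ (using $I=\sum_j|c_j|$, which is allowed to be as large as we like). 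Once this equivalence is in hand the lemma follows in one line via Bézout's identity, so I would keep the writeup short and spend the words only on the rewriting bookkeeping.
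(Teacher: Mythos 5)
Your proposal is correct and takes essentially the same route as the paper's proof: both directions rest on (a) every up-down sum being a $\Zz$-combination of the pairwise differences, so the gcd of the $b_j$ divides the up-down gcd, and (b) Bézout plus the observation that any $\Zz$-combination of the $b_j$ can be realized as an up-down sum by repeating signed pairs, giving the reverse inequality. Your version merely packages this as the identity between the value set and the subgroup $\gcd\{b_1,\dots,b_h\}\Zz$, and spells out the bookkeeping the paper leaves implicit.
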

\begin{proof}
Let $d$ be the gcd of $\{b_1,\dots,b_h\}$ and $d'$ be the up-down gcd of $\{a_1,\dots, a_l\}$. Write $d$ as a linear combination of the $b_i$. Note that replacing each $b_i$ by an appropriate $a_j-a_{j'}$ gives that $d$ can be written as a sum in the desired form. This shows that $d$ is greater than $d'$. On the other hand, by the definition of the up-down gcd, $d'$ is a linear combination of the $b_i$. Thus $d$ is a factor of $d'$. Hence $d=d'$.
\end{proof}


\begin{proposition}\label{updownachievable}
Let $n\in\Nn$. Assume $\{a_1,\dots,a_l\}$ is a finite subset of the spacer parameter with at least two distinct values and such that each $a_1,\dots, a_l$ occurs infinitely often.
Let $d$ be the up-down gcd of $\{a_1\dots, a_l\}$. Then for any $h\in \Nn$ there are $n$-blocks $\alpha$ and $\beta$ such that $\abs{\alpha}-\abs{\beta}=hd$. Moreover, for any $n$-block $\gamma$ and $h\in\Nn$ there are $n$-blocks $\alpha$ and $\beta$ such that $\gamma$ is an initial segment of both $\alpha$ and $\beta$, and $\abs{\alpha}-\abs{\beta}=hd$.
\end{proposition}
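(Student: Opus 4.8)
The plan is to climb one more rung on the ladder used for Propositions~\ref{onegap} and~\ref{onegapdifferentlevels}: there the length difference of two $n$-blocks was pumped up one unit at a time, using two spacers of a common level differing by $1$; here I will pump it up by a prescribed pairwise difference of $\{a_1,\dots,a_l\}$ at each step, after first telescoping so that a single level carries all of $a_1,\dots,a_l$ among its spacer values.

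The setup has two ingredients. First, the number theory: by Lemma~\ref{updowngcd} the up-down gcd $d$ equals the ordinary $\gcd$ of the pairwise differences $\abs{a_j-a_{j'}}$, so, unwinding the definition of the up-down gcd, I can fix a finite list of increments $\epsilon_1,\dots,\epsilon_M$, each of the form $a_p-a_{p'}$ for some $a_p,a_{p'}\in\{a_1,\dots,a_l\}$ (possibly negative), with $\sum_{t=1}^M\epsilon_t=d$; concatenating $h$ copies gives increments $\epsilon_1,\dots,\epsilon_{hM}$ summing to $hd$. Second, the telescoping, exactly as in Proposition~\ref{onegapdifferentlevels}: since every $a_i$ occurs infinitely often, I choose $n\le N_0<N_1<N_2<\cdots$ so that for each $p\ge 0$ the levels in $[N_p,N_{p+1})$ contain an occurrence of every $a_i$, and pass to the generating sequence $(w_s)$ with $w_s=v_s$ for $s\le N_0$ and $w_{N_0+p+1}=v_{N_{p+1}}$. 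Then $w_n=v_n$, every $n$-block for $(w_s)$ is an $n$-block for $(v_t)$, and every level of $(w_s)$ at or above $N_0$ carries all of $a_1,\dots,a_l$; in particular at any such level there are indices realizing any prescribed $\epsilon_t$ as a difference $a_{m,j}-a_{m,i}$.

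Then I would run the inductive construction of Proposition~\ref{onegap} relative to $(w_s)$. For the first assertion, start with $\alpha_0=\beta_0$ an $n$-block with $q_n^{(m_0)}$ expected occurrences of $v_n$ (the quantity computed for $(w_s)$), which by Lemma~\ref{vmsamelength} satisfies the hypotheses of Lemma~\ref{mainconstruction} at some level $m_0\ge N_0$; for the ``moreover'' assertion, instead choose $m_0$ large enough that the given $n$-block $\gamma$ occurs in $w_{m_0}$ with all its expected occurrences of $v_n$ also expected there, and arrange $\alpha_0=\beta_0$ to begin with that copy of $\gamma$, just as in Proposition~\ref{onegap}. At step $t$, with $\alpha_t=\alpha_{t,0}1^{s_t}\alpha_{t,1}$ and $\beta_t=\beta_{t,0}1^{s_t}\beta_{t,1}$ satisfying the hypotheses of Lemma~\ref{mainconstruction} at a common level $m_t$ and sharing the distinguished spacer value $s_t$, I pick $i_t,j_t$ at level $m_t$ with $a_{m_t,j_t}-a_{m_t,i_t}=\epsilon_t$, pick $\bar m_t\ge m_t+1$ and an index $\bar\imath_t$, and apply Lemma~\ref{mainconstruction} to $\alpha_t$ with $i_0=i_1=i_t$ and to $\beta_t$ with $i_0=i_1=j_t$, using the same spacer $a_{\bar m_t,\bar\imath_t}$ both times. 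The outputs $\alpha_{t+1},\beta_{t+1}$ are $n$-blocks again satisfying the hypotheses of Lemma~\ref{mainconstruction} at a common level with a common distinguished spacer value, and the length comparison gives $(\abs{\alpha_{t+1}}-\abs{\beta_{t+1}})-(\abs{\alpha_t}-\abs{\beta_t})=\epsilon_t$. After $hM$ steps, $\alpha:=\alpha_{hM}$ and $\beta:=\beta_{hM}$ are $n$-blocks for $(w_s)$, hence for $(v_t)$, with $\abs{\alpha}-\abs{\beta}=hd$, and with $\gamma$ an initial segment of both in the ``moreover'' case.

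The hard part — shallow rather than deep — will be the length bookkeeping in the inductive step: verifying that running Lemma~\ref{mainconstruction} with $i_0=i_1=i_t$ on the $\alpha$-side and $i_0=i_1=j_t$ on the $\beta$-side shifts $\abs{\alpha_t}-\abs{\beta_t}$ by exactly $a_{m_t,j_t}-a_{m_t,i_t}$. This reduces to noting that the words $\gamma_0,\gamma_1$ produced by Lemma~\ref{mainconstruction} together amount to a fixed number of copies of $v_{m_t}$ plus all the level-$m_t$ spacers except the single one indexed $i_0=i_1$, so the $\alpha$- and $\beta$-sides differ only in whether $a_{m_t,i_t}$ or $a_{m_t,j_t}$ is the omitted spacer, while the pieces carried over from $\alpha_t,\beta_t$ contribute equally since those blocks share the distinguished spacer value $s_t$. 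Apart from this, the argument is a transcription of Propositions~\ref{onegap} and~\ref{onegapdifferentlevels}.
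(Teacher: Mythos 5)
Your proposal is correct and follows essentially the same route as the paper: write $hd$ as a sum of signed pairwise differences of $\{a_1,\dots,a_l\}$ (via Lemma~\ref{updowngcd}), telescope as in Proposition~\ref{onegapdifferentlevels} so each level carries all the needed spacer values, and iterate Lemma~\ref{mainconstruction}, omitting $a_{i,-}$ on one side and $a_{i,+}$ on the other at each step. The paper's own argument is just a terser version of this, deferring to the proofs of Propositions~\ref{onegap} and~\ref{onegapdifferentlevels} for the details you spell out.
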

\begin{proof}
Without loss of generality assume that all $a_1,\dots, a_l$ are distinct and $l\geq 2$. Let $h\in\Nn$. Since $d$ is the up-down gcd, it follows that we can write $hd=\sum_{i=1}^{I}(a_{i,+}-a_{i,-})$ where $a_{i,+},a_{i,-}\in \{a_1,...,a_l\}$. We produce $n$-blocks $\alpha$ and $\beta$ so that $\abs{\alpha}-\abs{\beta}=hd$. Similar to the proofs of Propositions~\ref{onegap} and \ref{onegapdifferentlevels}, we construct $\alpha$ and $\beta$ by an induction in $I$ many steps, starting with $\alpha_0=\beta_0$, and making sure $\abs{\alpha_{i+1}}-\abs{\beta_{i+1}}=\abs{\alpha_i}-\abs{\beta_i}+a_{i,+}-a_{i,-}$. This is achieved by applying Lemma~\ref{mainconstruction} in a way so that a spacer parameter $a_{i,-}$ is omitted in the construction of $\alpha_{i+1}$ and a spacer parameter $a_{i,+}$ is omitted in the construction of $\beta_{i+1}$. Letting $\alpha=\alpha_I$ and $\beta=\beta_I$, we have $\abs{\alpha}-\abs{\beta}=hd$.
\end{proof}

\section{Maximal Equicontinuous Factors \label{MEF}}

\subsection{Finite factors of rank-one subshifts} In a rank-one subshift any non-$1^\Zz$ element generates a dense orbit. This implies that the only finite factors of a rank-one subshift are cyclic transformations $(\Zz/p\Zz, x\mapsto x+1\!\mod\! p)$ for $p\geq 1$. For brevity we will write this finite cyclic transformation as $\Zz/p\Zz$. Also, if the rank-one subshift has unbounded spacer parameter, then $1^\Zz$ is a fixed point, which implies that the only finite factor is trivial. 

Next we describe all finite factors of a rank-one subshift with bounded spacer parameter. We again assume that the rank-one subshift $(X,T)$ is given by the cutting parameter $(q_n)$ and spacer parameter $(a_{n,i})$ and is generated by $(v_n)$.

\begin{proposition}\label{Zzncondition}
Let $p\geq 2$ be an integer. A rank-one subshift $(X, T)$ with bounded spacer parameter has $\Zz/p\Zz$ as a factor iff there is some $n\in\Nn$, such that for all $m\geq n$, and all $1\leq i< q_m$, $p|(\abs{v_n}+a_{m,i})$.
\end{proposition}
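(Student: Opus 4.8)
The plan is to prove both directions by analyzing how a factor map onto $\Zz/p\Zz$ interacts with the clopen sets $E_{n,k}$. The key structural fact is that a factor map $\varphi:X\to\Zz/p\Zz$ corresponds to a clopen partition $X=P_0\sqcup\cdots\sqcup P_{p-1}$ with $T(P_j)=P_{j+1\bmod p}$; since $X\setminus\{1^{\Zz}\}$ has a subbasis of the form $E_{n,k}$ (Proposition~\ref{Evnkfacts}(5), using bounded spacer parameter), and each $P_j$ is clopen hence a finite union of such sets, we may fix $n$ large enough that each $P_j$ is a union of sets $E_{n,k}$, and moreover (enlarging $n$) that $\abs{v_n}>B$ where $B$ bounds the spacer parameter. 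Then being in $P_j$ is determined by the position (mod $p$) of the nearest expected occurrence of $v_n$; that is, there is a function $c:\Zz/p\Zz\to\Zz/p\Zz$ such that $x\in E_{n,k}$ implies $\varphi(x)=c(k\bmod p)$, and $c(k)=c(0)+k$ by the shift equivariance $T^l(E_{n,k})=E_{n,k-l}$. The upshot: \emph{a factor onto $\Zz/p\Zz$ exists iff there is $n$ such that whenever $E_{n,k}\cap E_{n,l}\neq\emptyset$ we have $p\mid(k-l)$} — i.e.\ expected occurrences of $v_n$ in any $x\in X$ all sit at positions congruent mod $p$. I would state this equivalence as the first lemma-step of the proof.

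For the backward direction (the condition implies $\Zz/p\Zz$ is a factor), assume there is $n$ with $p\mid(\abs{v_n}+a_{m,i})$ for all $m\geq n$, $1\leq i<q_m$. I want to show any two expected occurrences of $v_n$ in any $x\in X$ are at positions differing by a multiple of $p$. By Lemma~\ref{vnblockswitness} this amounts to: every $n$-block has length divisible by $p$. An $n$-block has the form $v_n1^{a_1}v_n1^{a_2}\cdots v_n1^{a_r}v_n$ (a word from one expected $v_n$ to the next, so $r+1$ copies of $v_n$ separated by $r$ spacer-blocks), where — by the coherence of expected occurrences and the structure of $V$ — each $a_j$ equals some $a_{m_j,i_j}$ with $m_j\geq n$. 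Its length is $(r+1)\abs{v_n}+\sum_j a_j = \sum_{j=1}^{r}(\abs{v_n}+a_j) + \abs{v_n}$; wait—more cleanly, group as $\abs{v_n} + \sum_{j=1}^{r}(a_j + \abs{v_n})$, and each summand $\abs{v_n}+a_j=\abs{v_n}+a_{m_j,i_j}$ is divisible by $p$ by hypothesis. Hence the length is $\equiv \abs{v_n}\pmod p$; since the empty $n$-block (well, $\alpha=v_n$ itself, length $\abs{v_n}$) must also be allowed and the relevant invariant is the position of expected occurrences, one checks $P_j=\{x: \text{expected occurrences of }v_n\text{ in }x\text{ lie at positions}\equiv j-\text{(offset)}\pmod p\}$ is a well-defined clopen partition permuted cyclically by $T$, on $X\setminus\{1^{\Zz}\}$; and $1^{\Zz}$ is not in $X$ (infinite subshift with bounded spacer is minimal), so this defines the factor map. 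I'd need to be slightly careful that the partition is genuinely by position mod $p$ and that $E_{n,0}$ lands entirely in one class — which is exactly the "every $n$-block has length $\equiv\abs{v_n}\pmod p$" computation above, applied to show $E_{n,0}$ and $E_{n,k}$ are disjoint when $p\nmid k$.

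For the forward direction, suppose $\Zz/p\Zz$ is a factor; by the first step there is $n$ (which I may take with $\abs{v_n}>B$) such that every $n$-block has length divisible by… more precisely such that $E_{n,k}\cap E_{n,l}\neq\emptyset\Rightarrow p\mid(k-l)$. I want $p\mid(\abs{v_n}+a_{m,i})$ for all $m\geq n$, $1\leq i<q_m$. Fix such $m,i$. Using Lemma~\ref{vmsamelength}/Lemma~\ref{singlespacerdifference}: an $n$-block $\alpha$ with exactly $q_n^{(m)}$ expected occurrences of $v_n$ satisfies $\abs{\alpha}-\abs{v_m}=a_{m',i'}$ for some $m'\geq m$; better, I'll directly exhibit two $n$-blocks realizing the needed length difference. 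The word $v_{m+1}=v_m1^{a_{m,1}}v_m\cdots1^{a_{m,q_m-1}}v_m$ is itself an $n$-block (it begins and ends with expected $v_n$'s inside $V$), and so is the initial segment $v_m1^{a_{m,1}}v_m\cdots1^{a_{m,i}}v_m$ read off as an $n$-block from the appropriate position — comparing lengths of $n$-blocks that differ by exactly inserting one more $v_m1^{a_{m,i}}$, which has length $\abs{v_m}+a_{m,i}$... hmm, rather: both $v_m$ (as an $n$-block, the stretch between two consecutive expected $v_n$'s spanning one $v_m$) and $v_m1^{a_{m,i}}v_m$... Let me instead use that $v_m$ and $v_m1^{a_{m,i}}$ both occur as "$\alpha v_n$" patterns. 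The cleanest: $v_m$ is an $n$-block of length $\abs{v_m}$, and $v_m1^{a_{m,i}}v_m$ contains an $n$-block of length $\abs{v_m}+a_{m,i}$ stretching from its first to its last expected $v_n$... no — from its first expected $v_n$ to its last, that $n$-block has length $2\abs{v_m}+a_{m,i}-\abs{v_n}$, whereas $v_m$ as an $n$-block has length $\abs{v_m}-\abs{v_n}$... I'll sort out the bookkeeping, but the point is that two naturally occurring $n$-blocks differ in length by exactly $\abs{v_m}+a_{m,i}$ (equivalently, two expected occurrences of $v_n$ in $V$ that are $\abs{v_m}+a_{m,i}$ apart, namely the start of some $v_m$ and the start of the next $v_m$ when separated by $1^{a_{m,i}}$), so $p\mid(\abs{v_m}+a_{m,i})$; combined with the backward direction's observation that $p\mid(\abs{v_m}+a_{m,i})$ for all $m\geq n,i$ forces $p\mid(\abs{v_n}+a_{m,i})$ too (since $\abs{v_m}\equiv\abs{v_n}$ by telescoping the same relation down level by level), we get the stated condition, possibly after replacing $n$ by a larger index.

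The main obstacle I anticipate is the first step — cleanly establishing that a factor onto $\Zz/p\Zz$ is "detected at level $n$" by the position mod $p$ of expected $v_n$-occurrences, with all the clopen-ness and shift-equivariance bookkeeping — and relatedly making sure the two directions' indices match up (the $n$ produced by the factor need not be the $n$ in the condition, but one can always increase it, using that $\abs{v_m}\equiv\abs{v_n}\pmod p$ propagates the divisibility). The number-theoretic content itself (telescoping $\abs{v_{m+1}}=q_m\abs{v_m}+\sum_{i}a_{m,i}$ and the divisibility relations) is routine once the dictionary between the factor and the $n$-block lengths is in place.
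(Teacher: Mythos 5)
Your overall strategy coincides with the paper's: the factor map is ``position mod $p$ of expected occurrences of $v_n$,'' the divisibility condition is equivalent to every $n$-block having length divisible by $p$ (note that with the paper's convention an $n$-block is $v_n1^{k_0}\cdots v_n1^{k_r}$, ending in spacers, so its length is $\sum_j(\abs{v_n}+k_j)$ and the computation is cleaner than your ``$\equiv\abs{v_n}\pmod p$'' version), and the forward direction extracts divisibility from two consecutive expected occurrences of $v_n$ separated by $1^{a_{m,i}}$.

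The genuine gap is in your ``first lemma-step.'' You justify the implication (factor $\Rightarrow$ positions of expected $v_n$'s all congruent mod $p$) by asserting that each clopen fibre $P_j=\varphi^{-1}(j)$ is, for some single level $n$, a union of sets $E_{n,k}$. That does not follow from clopenness: the $E_{n,k}$ form only a \emph{subbasis}, so compactness writes $P_j$ as a finite union of finite \emph{intersections} $E_{n_1,k_1}\cap\cdots\cap E_{n_r,k_r}$, and such an intersection need not contain any $E_{N,k}$ around a given point for any $N$. Concretely, a point of the form $T^s(V^*1^aV)$ lying in $E_{n,0}\cap E_{n,c}$ with the two constraints on opposite sides of the seam belongs to no $E_{N,k}$ contained in $E_{n,0}\cap E_{n,c}$, since membership in a single $E_{N,k}$ only pins down one side of the seam. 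The fix is much cheaper and is what the paper does: Proposition~\ref{Evnkfacts}(\ref{Evnkcontain}) supplies a \emph{single} cylinder $E_{n,k}\subseteq\varphi^{-1}(\{0\})$; then for any $x$ and any occurrence of $v_n1^{a_{m,i}}v_n$ in $x$ with both copies of $v_n$ expected, say starting at position $l$, the points $T^{l-k}(x)$ and $T^{l-k+\abs{v_n}+a_{m,i}}(x)$ both land in $E_{n,k}$, hence both map to $0$, forcing $p\mid(\abs{v_n}+a_{m,i})$ directly. This also removes the detour in your forward direction through $p\mid(\abs{v_m}+a_{m,i})$ and the telescoping $\abs{v_m}\equiv\abs{v_n}\pmod p$ (which is correct but unnecessary): the relevant pair of expected occurrences of $v_n$ --- the last one inside an expected $v_m$ and the first one of the next expected $v_m$ --- are exactly $\abs{v_n}+a_{m,i}$ apart. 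Your backward direction is essentially the paper's and is fine once the $n$-block length bookkeeping is stated with the correct convention.
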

\begin{proof} First assume $n\in\Nn$ is such that for all $m\geq n$ and $1\leq i<q_m$, $p|(\abs{v_n}+a_{m,i})$. Note that such $\abs{v_n}+a_{m_i}$ are precisely the lengths of an $n$-block with one expected occurrence of $v_n$. We define a factor map $\varphi: X\to \Zz/p\Zz$ based on the starting positions of the expected occurrences of $v_n$. Given any $x\in X$, arbitrarily pick a starting position $k$ of an expected occurrence of $v_n$ in $x$, and let $\varphi(x)=-k \!\mod\! p$. Our assumption guarantees that $\varphi(x)$ does not depend on the particular $k$ selected, as all $n$-blocks have lengths divisible by $p$. Since $\varphi(T(x))=\varphi(x)+1\!\mod\! p$, $\varphi$ is a factor map.

Conversely, let $\varphi:X\to \Zz/p\Zz$ be the factor map. By Proposition~\ref{Evnkfacts}(\ref{Evnkcontain}), there is some $E_{n,k}\subseteq \varphi^{-1}(\{0\})$. Now let $m\geq n$ and $1\leq i< q_m$. Consider an arbitrary $x\in X$. $x$ will necessarily contain an expected occurrence of $v_{m+1}$, and it follows that $x$ contains an occurrence of $v_n1^{a_{m,i}}v_n$ where the occurrences of $v_n$ are expected. Let $l$ be the starting position of this occurrence of $v_n1^{a_{m,i}}v_n$ in $x$. Then $x\in E_{n,l}\cap E_{n,l+\abs{v_n}+a_{m,i}}$, and so $T^{l-k}(x), T^{l-k+\abs{v_n}+a_{m,i}}(x)\in E_{n,k}$. Thus $\varphi(T^{l-k}(x))=\varphi(T^{l-k+\abs{v_n}+a_{m,i}}(x))=0$. But $\varphi(T^{l-k+\abs{v_n}+a_{m,i}}(x))=\varphi(T^{l-k}(x))+\abs{v_n}+a_{m,i}\!\mod\!p$, so $\abs{v_n}+a_{m,i}\equiv 0\!\mod\!p$.  
\end{proof}

There is a limitation on what finite factors are possible for a rank-one subshift with bounded spacer parameter.

\begin{proposition}\label{maxfinite}
Let $(X,T)$ be an infinite rank-one subshift with bounded spacer parameter. Then there is a largest $p_{\max}$ so that $\Zz/p_{\max}\Zz$ is a factor of $(X,T)$. Moreover, if $\Zz/p \Zz$ is any finite factor of $(X,T)$, then $p|p_{\max}$.
\end{proposition}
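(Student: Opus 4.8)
The plan is to reduce the statement to the combinatorial criterion of Proposition~\ref{Zzncondition} together with an elementary divisibility argument. For each $n\in\Nn$ I would consider
$$ g_n=\gcd\{\,\abs{v_n}+a_{m,i}:m\geq n,\ 1\leq i<q_m\,\}, $$
a well-defined positive integer. By Proposition~\ref{Zzncondition}, for $p\geq 2$ the system $(X,T)$ has $\Zz/p\Zz$ as a factor exactly when $p\mid g_n$ for some $n$; together with the trivial factor $\Zz/1\Zz$ (and the fact, noted in \S\ref{MEF}, that every finite factor of $(X,T)$ is some $\Zz/p\Zz$) this shows that the set of orders of finite factors of $(X,T)$ is precisely $\bigcup_n\{\,d\geq 1:d\mid g_n\,\}$. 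Hence it suffices to prove (i) $g_n\mid g_{n+1}$ for all $n$, and (ii) the sequence $(g_n)$ is bounded: then $(g_n)$ is a chain under divisibility with bounded values, so it stabilizes at some $g$, the divisor sets are nested and eventually constant, and $p_{\max}:=g$ is the largest order of a finite factor while every finite factor $\Zz/p\Zz$ satisfies $p\mid p_{\max}$.

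For (i), I would use that $g_n$ divides each of $\abs{v_n}+a_{n,1},\dots,\abs{v_n}+a_{n,q_n-1}$, since these are among the defining terms of $g_n$ (taking $m=n$). Summing these congruences and recalling $\abs{v_{n+1}}=q_n\abs{v_n}+\sum_{i=1}^{q_n-1}a_{n,i}$ gives $\abs{v_{n+1}}\equiv\abs{v_n}\pmod{g_n}$. Since every $m\geq n+1$ in particular satisfies $m\geq n$, for every such $m$ and every $i$ we get $\abs{v_{n+1}}+a_{m,i}\equiv\abs{v_n}+a_{m,i}\equiv 0\pmod{g_n}$, so $g_n$ divides every term defining $g_{n+1}$, hence $g_n\mid g_{n+1}$.

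For (ii), the key point is that for every $n$ the set $\{a_{m,i}:m\geq n,\ 1\leq i<q_m\}$ contains at least two distinct values. Otherwise it is a singleton $\{a\}$, and since the expansion $V=v_n1^{k_0}v_n1^{k_1}v_n\cdots$ into expected occurrences of $v_n$ uses only spacer values above level $n$, all $k_j$ equal $a$, so $V=(v_n1^a)^\infty$ is periodic and $(X,T)$ is finite, contrary to hypothesis. Choosing $m_1,m_2\geq n$ with $a_{m_1,i_1}\neq a_{m_2,i_2}$, we see that $g_n$ divides the nonzero integer $(\abs{v_n}+a_{m_1,i_1})-(\abs{v_n}+a_{m_2,i_2})=a_{m_1,i_1}-a_{m_2,i_2}$, whose absolute value is less than $B$; hence $g_n<B$. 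This is the step I expect to require the most care—correctly extracting from the infiniteness of $(X,T)$ that two distinct spacer values occur above every level—while (i) and the assembly of (i) and (ii) into the conclusion are routine congruence bookkeeping.
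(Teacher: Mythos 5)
Your proof is correct, and it rests on the same two pillars as the paper's: Proposition~\ref{Zzncondition} as the combinatorial criterion, and the combination of infiniteness with the spacer bound to cap the possible orders of finite factors. The packaging differs enough to be worth comparing. The paper bounds the orders by contrapositive — a factor $\Zz/p\Zz$ with $p$ exceeding the spacer bound would force all spacers above some level to coincide, making $V$ periodic and $X$ finite — and then obtains divisibility by closing the set of orders under lcm. You instead track the chain $g_n\mid g_{n+1}$ of gcds and bound it by exhibiting two distinct spacer values above every level; this is the same underlying observation, run forwards rather than by contradiction. The genuine added value of your route is the congruence $\abs{v_{n+1}}\equiv\abs{v_n}\pmod{g_n}$, which makes explicit that the divisibility condition of Proposition~\ref{Zzncondition}, once witnessed at level $n$, persists at all higher levels. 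The paper's lcm step implicitly needs exactly this (to combine witnesses $n_p$ and $n_q$ one must first pass both conditions to the common level $\max(n_p,n_q)$ before invoking Proposition~\ref{Zzncondition} for $\operatorname{lcm}(p,q)$), but does not spell it out; your version is therefore slightly more self-contained, at the cost of being longer. The only blemish is cosmetic: with the paper's convention $a_{m,i}\leq B$ the difference of two distinct spacer values gives $g_n\leq B$ rather than $g_n<B$, which changes nothing in the argument.
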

\begin{proof}
Let $C$ be a bound for the spacer parameter and suppose $(X,T)$ has a factor $\Zz/p\Zz$ where $p>C$. By Proposition \ref{Zzncondition}, there is some $n$ so that $p|(\abs{v_n}+a_{m,i})$ for all $m\geq n$ and $1\leq i< q_m$. Since $p>C\geq a_{m,i}$, there is only one possible value for $a_{m,i}$. Therefore, the $a_{m,i}$ are constant for $m\geq n$ and $1\leq i<q_m$. But then the infinite rank-one word is periodic, and $(X,T)$ cannot be infinite, a contradiction.

Now, if $\Zz/p\Zz$ and $\Zz/q\Zz$ are both finite factors of $(X, T)$, then Proposition~\ref{Zzncondition} gives that $\Zz/\mbox{lcm}(p,q)\Zz$ is a finite factor. Thus $p_{\max}$ is a multiple of all $p$ where $\Zz/p\Zz$ is a factor of $(X, T)$.
\end{proof}

Thus any rank-one subshift can have only finitely many finite factors. This implies that a rank-one subshift cannot have an infinite odometer as a factor, since an infinite odometer has infinitely many finite factors. This is in contrast to measure-theoretic rank-one transformations. It is well known that rank-one transformations can have infinite odometer factors.

\subsection{Partition proximality} We introduce a general concept that allows us to identify equicontinuous factors of rank-one subshifts.

\begin{definition}
Let $(X,T)$ be a topological dynamical system. We say that $X$ has {\em partition proximality} if there are finitely many points $x_1,...,x_p$ so that for any $x\in X$, there is some $x_i$, $1\leq i\leq p$, so that for any $\delta>0$, we can find $z_1,z_2\in X$ and some $l_1,l_2,l_3\in \Zz$, with $d(T^{l_1}(x),T^{l_1}(z_1))<\delta$, $d(T^{l_2}(x_i),T^{l_2}(z_2))<\delta$, and $d(T^{l_3}(z_1),T^{l_3}(z_2))<\delta$. We call the finitely many $x_1,...,x_p$ {\em reference points}.
\end{definition}

This property is of interest because of the following proposition.
\begin{proposition}\label{notafactorfinite}
Let $(X,T)$ be a topological dynamical system having partition proximality with $p$ many reference points. Let $(Y,S)$ be an equicontinuous factor of $(X,T)$. Then $Y$ is finite. In fact, $\abs{Y}\leq p$. 
\end{proposition}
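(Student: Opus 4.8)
The plan is to transfer the partition proximality structure through the factor map and use equicontinuity to show that the image $Y$ is covered by $p$ "proximal cells," which equicontinuity forces to be singletons. Let $\pi: X \to Y$ be the factor map, $d_Y$ a compatible metric on $Y$, and let $x_1,\dots,x_p$ be the reference points. I would first record the elementary fact that a factor map between compact metric systems is uniformly continuous, so for every $\epsilon>0$ there is $\delta>0$ with $d(a,b)<\delta \Rightarrow d_Y(\pi(a),\pi(b))<\epsilon$, and moreover this passes through iterates since $\pi$ intertwines $T$ and $S$. I would also invoke equicontinuity of $(Y,S)$: for every $\epsilon>0$ there is $\eta>0$ so that $d_Y(y_1,y_2)<\eta$ implies $d_Y(S^l y_1, S^l y_2)<\epsilon$ for all $l\in\Zz$.

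The core step: I claim that for each $x\in X$ there is some reference point $x_i$ with $\pi(x)=\pi(x_i)$. Fix $x$, pick the $x_i$ guaranteed by partition proximality, and fix an arbitrary $\epsilon>0$. Choose $\eta$ from equicontinuity for $\epsilon$, then choose $\delta$ from uniform continuity of $\pi$ for $\eta$. Apply partition proximality with this $\delta$ to get $z_1,z_2\in X$ and $l_1,l_2,l_3\in\Zz$ with $d(T^{l_1}x,T^{l_1}z_1)<\delta$, $d(T^{l_2}x_i,T^{l_2}z_2)<\delta$, and $d(T^{l_3}z_1,T^{l_3}z_2)<\delta$. Pushing through $\pi$: $d_Y(S^{l_1}\pi(x),S^{l_1}\pi(z_1))<\eta$, hence applying $S^{-l_1}$ via equicontinuity, $d_Y(\pi(x),\pi(z_1))<\epsilon$; similarly $d_Y(\pi(x_i),\pi(z_2))<\epsilon$ and $d_Y(\pi(z_1),\pi(z_2))<\epsilon$. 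The triangle inequality gives $d_Y(\pi(x),\pi(x_i))<3\epsilon$. Since $\epsilon>0$ was arbitrary, $\pi(x)=\pi(x_i)$.

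This shows $Y=\pi(X)=\{\pi(x_1),\dots,\pi(x_p)\}$, so $\abs{Y}\leq p$ and in particular $Y$ is finite. I would note that once $Y$ is finite, the topology on $Y$ is discrete and $S$ is a permutation, so no further argument is needed.

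The main obstacle is purely bookkeeping with the quantifiers: one must be careful to choose $\delta$ after $\eta$ after $\epsilon$ (so that the $\delta$-closeness at times $l_j$ survives pushing forward under $\pi$ and then being transported back to time $0$ by equicontinuity), and to apply equicontinuity in the correct direction (from time $l_j$ back to time $0$, using $S^{-l_j}$, which is fine since equicontinuity in the definition quantifies over all $l\in\Zz$). There is no genuine difficulty beyond this; the content of the result is entirely front-loaded into the definition of partition proximality.
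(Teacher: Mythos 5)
Your proof is correct and follows essentially the same route as the paper's: fix $x$ and its reference point $x_i$, push the three partition-proximality closeness relations through the factor map, use equicontinuity to transport them back to time $0$, and conclude $\pi(x)=\pi(x_i)$ by the triangle inequality (the paper phrases this as a contradiction with $\epsilon/3$ rather than letting $\epsilon\to 0$, but that is a cosmetic difference). No gaps.
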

\begin{proof}
Let $(X,T)$ have partition proximality with reference points $x_1,...,x_p$. Let $\varphi:X\to Y$ be the factor map. Let $x\in X$ and $x_i$ be the reference point for $x$ in the definition of partition proximality. We show $\varphi(x)=\varphi(x_i)$. 

Suppose $\varphi(x)\neq \varphi(x_i)$. Let $\epsilon=d_Y(\varphi(x),\varphi(x_i))>0$. Since $Y$ is equicontinuous, we can find some $\delta_Y>0$ so that for any $y_1,y_2\in Y$, whenever $d_Y(y_1,y_2)<\delta_Y$, we have $d_Y(S^l(y_1),S^l(y_2))<\frac{\epsilon}{3}$ for all $l\in\Zz$. Since $X$ and $Y$ are compact metric spaces, and $\varphi$ is continuous, we can find some $\delta>0$, so that for any $w_1,w_2\in X$, if $d_X(w_1,w_2)<\delta$, then $d_Y(\varphi(w_1),\varphi(w_2))<\delta_Y$.

Since $X$ has partition proximality, there are $z_1,z_2\in X$ and $l_1,l_2,l_3\in \Zz$ such that 
$$d_X(T^{l_1}(x),T^{l_1}(z_1)),\ d_X(T^{l_2}(x_i),T^{l_2}(z_2)),\ d_X(T^{l_3}(z_1),T^{l_3}(z_2))<\delta. $$
Thus
$$ d_Y(\varphi(T^{l_1}(x)),\varphi(T^{l_1}(z_1))), \ d_Y(\varphi(T^{l_2}(x_i)),\varphi(T^{l_2}(z_2))),\ d_Y(\varphi(T^{l_3}(z_1)),\varphi(T^{l_3}(z_2)))<\delta_Y.$$ 
By equicontinuity, we get 
$$d_Y(\varphi(x),\varphi(z_1)),\ d_Y(\varphi(x_i),\varphi(z_2)),\ d_Y(\varphi(z_1),\varphi(z_2))<\displaystyle\frac{\epsilon}{3}.$$ 
From triangle inequality, we get $d_Y(\varphi(x),\varphi(x_i))<\epsilon$, a contradiction.
\end{proof}



The next lemma characterizes the partition proximality property in rank-one subshifts in terms of $E_{n,k}$.

\begin{lemma}\label{Evnkproperty}
Let $(X,T)$ be a rank-one subshift with bounded spacer parameter. Let $p\geq 1$ be an integer and $x_1,\dots, x_p\in X$. Then $(X,T)$ has partition proximality with reference points $x_1,\dots, x_p$ iff for any $x\in X$ and sufficiently large $n\in\Nn$, there are $k_1,k_2,k_3\in \Zz$ and $1\leq i\leq p$ such that $x\in E_{n,k_1}$, $x_i\in E_{n,k_2}$, and both $E_{n,k_1}\cap E_{n,k_3}\neq \emptyset$ and $E_{n,k_2}\cap E_{n,k_3}\neq \emptyset$.
\end{lemma}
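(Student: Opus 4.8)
The plan is to translate both sides of the equivalence into statements about agreement of points of $X$ on long finite windows of coordinates, exploiting two features of the bounded-spacer case. First, since each $v_n$ begins and ends with $0$, the point $1^{\Zz}$ does not lie in $X$, so by \cite{GaoHill} every $x\in X$ has its two-sided decomposition into expected occurrences of $v_n$; moreover an $n$-block with a single expected occurrence of $v_n$ has length $\abs{v_n}+a_{m',i}\le\abs{v_n}+B$, where $B$ bounds the spacer parameter, so in any $x\in X$ consecutive expected occurrences of $v_n$ start at positions differing by at most $\abs{v_n}+B$. Second, by Proposition~\ref{Evnkfacts}(\ref{Evnkdetermined}) there is a constant $C_n$ so that membership in $E_{n,k}$ is decided by the word $x[k,k+C_n-1]$. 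Combining these, for every $x\in X$ and every interval $I$ of length at least $L_n:=2(\abs{v_n}+B+C_n)$ there is an expected occurrence of $v_n$ in $x$ starting at some $k\in I$ with $[k,k+C_n-1]\subseteq I$, and two points of $X$ that agree on $[k,k+C_n-1]$ agree about membership in $E_{n,k}$. I will also use that, for any compatible metric, $d(T^l u,T^l w)$ is small exactly when $u$ and $w$ agree on a long window of coordinates around $l$, and that $\abs{v_n}\to\infty$.

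For ($\Leftarrow$), assume the displayed condition. For each sufficiently large $n$ it hands us an index $i(n)\in\{1,\dots,p\}$ (possibly depending on $x$), so by pigeonhole fix $i^\ast$ with $i(n)=i^\ast$ for infinitely many $n$; I claim $x_1,\dots,x_p$ are reference points, with $x_{i^\ast}$ attached to $x$. Given $\delta>0$, choose $n$ with $i(n)=i^\ast$ and $\abs{v_n}$ large enough that agreement on a window of length $\abs{v_n}$ forces the corresponding shifted points within $\delta$. The condition gives $k_1,k_2,k_3$ with $x\in E_{n,k_1}$, $x_{i^\ast}\in E_{n,k_2}$, $E_{n,k_1}\cap E_{n,k_3}\neq\emptyset$, $E_{n,k_2}\cap E_{n,k_3}\neq\emptyset$; pick $z_1\in E_{n,k_1}\cap E_{n,k_3}$ and $z_2\in E_{n,k_2}\cap E_{n,k_3}$. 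Since $x$ and $z_1$ both have an expected, hence literal, occurrence of $v_n$ at $k_1$, they agree on $[k_1,k_1+\abs{v_n}-1]$; likewise $x_{i^\ast},z_2$ agree on $[k_2,k_2+\abs{v_n}-1]$ and $z_1,z_2$ agree on $[k_3,k_3+\abs{v_n}-1]$. Taking $l_1,l_2,l_3$ to be the midpoints of these intervals yields $d(T^{l_1}x,T^{l_1}z_1),\,d(T^{l_2}x_{i^\ast},T^{l_2}z_2),\,d(T^{l_3}z_1,T^{l_3}z_2)<\delta$, exactly the partition-proximality requirement.

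For ($\Rightarrow$), assume partition proximality with reference points $x_1,\dots,x_p$, fix $x\in X$ and a reference point $x_i$ witnessing the definition for $x$, and fix a large $n$. Choose $\delta>0$ small enough that $d(T^l u,T^l w)<\delta$ forces $u$ and $w$ to agree on a window of length at least $L_n$ around coordinate $l$; partition proximality then supplies $z_1,z_2\in X$ and $l_1,l_2,l_3$ with $x=z_1$ on an interval $J_1\ni l_1$ of length $\ge L_n$, $x_i=z_2$ on $J_2\ni l_2$, and $z_1=z_2$ on $J_3\ni l_3$. Inside $J_1$ find an expected occurrence of $v_n$ in $z_1$ starting at $k_1$ with $[k_1,k_1+C_n-1]\subseteq J_1$, so $z_1\in E_{n,k_1}$, and since $x=z_1$ there, $x\in E_{n,k_1}$. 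Similarly obtain $k_2$ with $[k_2,k_2+C_n-1]\subseteq J_2$ and $z_2,x_i\in E_{n,k_2}$, and $k_3$ with $[k_3,k_3+C_n-1]\subseteq J_3$ and $z_1\in E_{n,k_3}$, whence $z_2\in E_{n,k_3}$ because $z_1=z_2$ on $[k_3,k_3+C_n-1]$. Then $z_1\in E_{n,k_1}\cap E_{n,k_3}$ and $z_2\in E_{n,k_2}\cap E_{n,k_3}$ are nonempty while $x\in E_{n,k_1}$ and $x_i\in E_{n,k_2}$, as required.

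The conceptual content is light; the main care is the bookkeeping around $L_n$, which must be long enough to contain the $C_n$-long word that determines a set $E_{n,k}$ yet short enough relative to $\delta$ for the metric estimates, and the observation in ($\Leftarrow$) that the index $i$ provided by the hypothesis can depend on $n$, handled by the pigeonhole extraction of a single $i^\ast$ valid for infinitely many $n$ — which suffices since $\delta>0$ is arbitrary.
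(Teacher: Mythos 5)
Your proof is correct and follows essentially the same route as the paper's: bounded spacers guarantee an expected occurrence of $v_n$ in every window of length $\abs{v_n}+B$, Proposition~\ref{Evnkfacts}(\ref{Evnkdetermined}) transfers membership in $E_{n,k}$ between points agreeing on a $C$-long window, and the translation between metric closeness and agreement on long windows does the rest (the paper phrases the reverse direction via shifting into a small set $E_{n,k}$ rather than via window agreement, but that is the same mechanism). Your pigeonhole extraction of a single $i^\ast$ valid for infinitely many $n$ is a small point of extra care that the paper's proof passes over silently.
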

\begin{proof}
Suppose $(X,T)$ has partition proximality with reference points $x_1,...,x_p$. Let $B>0$ be an upper bound for the spacer parameter of $(X, T)$. Fix $x\in X$ and $n\in\Nn$. Let $x_i$ be the reference point witnessing partition proximality for $x$. By Proposition \ref{Evnkfacts}(\ref{Evnkdetermined}), there is $C>0$ and finitely many words $\alpha_1, \dots, \alpha_r$ with $\abs{\alpha_j}\leq C$ for all $1\leq j\leq r$, such that for any point $y\in X$ and $k\in \Zz$ with an occurrence of $v_n$ starting at position $k$ in $y$, $y\in E_{n,k}$ iff $y$ contains an occurrence of some $\alpha_j$ starting at position $k$. Let $\delta>0$ be small enough so that for any $y_1,y_2\in X$, $d(y_1,y_2)<\delta$ implies that $y_1$ and $y_2$ agree on a string of length at least $\abs{v_n}+B+C$ starting at position $0$, i.e. $y_1[0,\abs{v_n}+B+C]=y_2[0,\abs{v_n}+B+C]$. 

We claim that if $y_1[0,\abs{v_n}+B+C]=y_2[0,\abs{v_n}+B+C]$ then there is $0\leq k<\abs{v_n}+B$ such that $y_1, y_2\in E_{n,k}$. To see this, let $0\leq k<\abs{v_n}+B$ be such that $y_1$ has an expected occurrence of $v_n$ starting at position $k$. Such $k$ must exist since $y_1$ is covered by expected occurrences of $v_n$ with spacers in between, and the numbers of consecutive spacers are bounded by $B$. Since this occurrence of $v_n$ in $y_1$ is expected, there is $1\leq j\leq r$ such that $y_1[k, k+\abs{\alpha_j}-1]=\alpha_j$. By our assumption, $y_2[k,k+\abs{\alpha_j}-1]=\alpha_j$. Thus $y_2\in E_{n,k}$. This proves the claim.

Now, from partition proximality we get $z_1,z_2\in X$ and $l_1,l_2,l_3\in \Zz$ such that 
$$d(T^{l_1}(x), T^{l_1}(z_1)), d(T^{l_2}(x_i), T^{l_2}(z_2)), d(T^{l_3}(z_1), T^{l_3}(z_2))<\delta.$$
Thus, from the above claim, we get $0\leq t_1, t_2, t_3<\abs{v_n}+B$ such that $T^{l_1}(x),T^{l_1}(z_1)\in E_{n,t_1}$, $T^{l_2}(x_i), T^{l_2}(z_2)\in E_{n, t_2}$, and $T^{l_3}(z_1), T^{l_3}(z_2)\in E_{n, t_3}$. Hence, $x, z_1\in E_{n, t_1+l_1}$, $x_i, z_2\in E_{n, t_2+l_2}$, and $z_1, z_2\in E_{n, t_3+l_3}$. Letting $k_1=t_1+l_1$, $k_2=t_2+l_2$ and $k_3=t_3+l_3$, we obtain $x\in E_{n,k_1}, x_i\in E_{n, k_2}$, and both $z_1\in E_{n,k_1}\cap E_{n, k_3}\neq\emptyset$ and $z_2\in E_{n, k_2}\cap E_{n, k_3}\neq\emptyset$.

Cnversely, let $x\in X$ and fix $1\leq i\leq r$ satisfying the assumption. Let $\delta>0$ and $D$ an arbitrary open ball of radius $\frac{\delta}{3}$. By Proposition~\ref{Evnkfacts}(\ref{Evnkcontain}), there is a $E_{m,k}\subseteq D$ for some $m\in\Nn$ and $k\in\Nn$. Fix such $m$ and $k$. Note that $\mbox{diam}(E_{n,k})<\delta$ for any $n\geq m$. Let $n\geq m$ be large enough as required by the assumption.

By the assumption, there are $k_1, k_2,k_3\in\Zz$ such that $x\in E_{n,k_1}$, $x_i\in E_{n,k_2}$, and both $E_{n,k_1}\cap E_{n,k_3}\neq \emptyset$ and $E_{n,k_2}\cap E_{n,k_3}\neq \emptyset$. Let $z_1\in E_{n,k_1}\cap E_{n,k_3}$ and $z_2\in E_{n,k_2}\cap E_{n,k_3}$. Since $x,z_1\in E_{n,k_1}$, we have $T^{k_1-k}(x),T^{k_1-k}(z_1)\in E_{v_n,k}$, so letting $l_1=k_1-k$, we have $d(T^{l_1}(x),T^{l_1}(z_1))<\delta$. Similarly, letting $l_2=k_2-k$ and $l=k_3-k$, we get $d(T^{l_2}(x_i),T^{l_2}(z_2))<\delta$ and $d(T^{l}(z_1),T^{l}(z_2))<\delta$. Therefore, $(X,T)$ has partition proximality with reference points $x_1,...,x_p$.
\end{proof}

Next we tie the partition proximality property for rank-one subshifts with our analysis of different lengths of $n$-blocks.

\begin{proposition}\label{mcombinatorialproperty}
Let $p\geq 1$ be an integer. Let $(X,T)$ be a rank-one subshift with bouncede spacer parameter. Suppose for sufficiently large $n\in\Nn$ and every $h\in \Nn$, there are $n$-blocks $\alpha$, $\beta$ with $\abs{\alpha}-\abs{\beta}=ph$. Then $(X,T)$ has partition proximality with $p$ reference points.  
\end{proposition}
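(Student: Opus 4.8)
The plan is to reduce everything to the combinatorial criterion of Lemma~\ref{Evnkproperty}: I will exhibit $p$ reference points $x_1,\dots,x_p\in X$ such that for every $x\in X$ and every sufficiently large $n$ there are $k_1,k_2,k_3\in\Zz$ and $1\le i\le p$ with $x\in E_{n,k_1}$, $x_i\in E_{n,k_2}$, $E_{n,k_1}\cap E_{n,k_3}\ne\emptyset$, and $E_{n,k_2}\cap E_{n,k_3}\ne\emptyset$. For the reference points I would fix, using Lemma~\ref{boundedVpoints} and a shift, a point $y\in X$ with $y[0,\infty)=V$; since $V$ begins with an expected occurrence of $v_n$ for every $n$, we have $y\in E_{n,0}$ for all $n$. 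Then set $x_j=T^{-(j-1)}(y)$ for $1\le j\le p$, so that $x_j\in E_{n,j-1}$ for every $n$ by Proposition~\ref{Evnkfacts}(\ref{Evnkshift}).

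Next, fix $x\in X$. Since the spacer parameter is bounded, $1^\Zz\notin X$, so $x$ carries the expected decomposition of \cite{GaoHill}; let $P_n(x)=\{k\in\Zz: x\in E_{n,k}\}$ be the (nonempty) set of starting positions of expected occurrences of $v_n$ in $x$. Because $v_n$ is an initial segment of $v_{n+1}$ and expected occurrences cohere, $P_{n+1}(x)\subseteq P_n(x)$, so the reductions $P_n(x)\bmod p$ form a nested decreasing sequence of nonempty subsets of $\Zz/p\Zz$ and hence stabilize, for all $n$ past some $n_1(x)$, to a fixed nonempty $R\subseteq\Zz/p\Zz$. I would then pick, once and for all, $r\in R$ and an index $j\in\{1,\dots,p\}$ with $j-1\equiv r\pmod p$; this $j$ is the reference index assigned to $x$, and crucially it does not depend on $n$.

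Finally, for $n$ larger than both $n_1(x)$ and the threshold $n_0$ beyond which the hypothesis applies, I would choose $k_1\in P_n(x)$ with $k_1\equiv j-1\pmod p$ (possible since $r\in P_n(x)\bmod p$) and write $k_1-(j-1)=pt$ with $t\in\Zz$. Applying the hypothesis with $h=|t|$ gives $n$-blocks $\alpha,\beta$ with $|\alpha|-|\beta|=p|t|$; putting $k_3=k_1+|\beta|$ when $t\ge 0$ and $k_3=k_1+|\alpha|$ when $t<0$ makes both $k_3-k_1$ and $k_3-(j-1)$ equal to lengths of $n$-blocks (namely $\{|\alpha|,|\beta|\}$ in some order), and both are positive since every $n$-block has length $\ge|v_n|$. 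By Lemma~\ref{vnblockswitness} (with $k_2=j-1$) this yields $E_{n,k_1}\cap E_{n,k_3}\ne\emptyset$ and $E_{n,k_2}\cap E_{n,k_3}\ne\emptyset$, while $x\in E_{n,k_1}$ and $x_j\in E_{n,j-1}=E_{n,k_2}$ hold by construction, which is exactly the criterion of Lemma~\ref{Evnkproperty}. I expect the main obstacle to be precisely the stabilization step: one must ensure a single reference index works for all large $n$ at once, since partition proximality forbids letting it drift with $n$. The rest is bookkeeping with the identity $T^l(E_{n,k})=E_{n,k-l}$ and the block dictionary; a minor point to verify is that $y\in E_{n,0}$ for all $n$, i.e. that the $v_n$ at the start of the embedded copy of $V$ is really expected, which follows from coherence of expected occurrences and uniqueness of the expected decomposition in \cite{GaoHill}.
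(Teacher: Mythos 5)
Your proof is correct and follows essentially the same route as the paper's: the same reference points (shifts of a point whose right tail is $V$, obtained from Lemma~\ref{boundedVpoints}), the same reduction through Lemma~\ref{Evnkproperty}, and the same use of the hypothesis to produce $n$-blocks $\alpha,\beta$ with $\abs{\alpha}-\abs{\beta}=p\abs{t}$ witnessing the two nonempty intersections via Lemma~\ref{vnblockswitness}. Your explicit stabilization of the residue set $P_n(x)\bmod p$ to pin down a single reference index independent of $n$ is a worthwhile refinement of a point the paper's proof passes over silently, but it does not change the argument's substance.
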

\begin{proof}
By Lemma \ref{boundedVpoints} there is some $a\in\Nn$ so that $X$ contains an element of the form $V^*1^aV$. Fix a such $a$ and let $x_1,...,x_p$ be points of the form $V^*1^aV$, so that for each $1\leq i\leq p$, the demonstrated occurrence of $V$ starts at position $i$ in $x_i$. In particular, $x_i[i,+\infty)=V$ and $x_i\in E_{n,i}$.

Let $n$ be sufficiently large. Let $x\in X$. Since $(X,T)$ has bounded spacer parameter, $x\in E_{n,k_1}$ for some $k_1$. Let $1\leq i\leq p$ with $i\equiv k_1\!\mod\!p$.  So $\abs{k_1-i}=hp$ for some $h$. Without loss of generality, assume $k_i-i=hp$. Our assumption gives $n$-blocks $\alpha$ and $\beta$ with $\abs{\alpha}-\abs{\beta}=hp$. Let $k_2=i$ and $k_3=k_1+\abs{\beta}$. 

By Lemma \ref{vnblockswitness}, $\beta$ witnesses that $E_{n,k_1}\cap E_{n,k_3}\neq \emptyset$. Also, $k_2+\abs{\alpha}=k_2+\abs{\beta}+hp=i+hp+\abs{\beta}=k_1+\abs{\beta}=k_3$, so $\alpha$ witnesses that $E_{n,k_2}\cap E_{n,k_3}\neq \emptyset$ by Lemma~\ref{vnblockswitness}.
Therefore, by Lemma \ref{Evnkproperty}, $(X,T)$ has partition proximality with $p$ reference points.
\end{proof}

\subsection{Maximal equicontinuous factors}

We have developed all the ingredients for the determination of the maximal equicontinuous factors for rank-one subshifts.

\begin{theorem}\label{boundedmef}
Let $(X,T)$ be a rank-one subshfit with bounded spacer parameter. Then the maximal equicontinuous factor of $(X,T)$ is $\Zz/p_{\max}\Zz$, where $p_{\max}$ is the largest $p$ with the property that  there is $n\in \Nn$ such that for all $m\geq n$ and all $1\leq i< q_m$, $p|(\abs{v_n}+a_{m,i})$.
\end{theorem}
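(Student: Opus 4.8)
The plan is to prove the two inequalities separately: that $\Zz/p_{\max}\Zz$ \emph{is} an equicontinuous factor, and that \emph{no strictly larger} equicontinuous factor exists. The first direction is essentially already in hand. By the definition of $p_{\max}$, there is $n$ with $p_{\max}\mid(\abs{v_n}+a_{m,i})$ for all $m\ge n$ and $1\le i<q_m$, so by Proposition~\ref{Zzncondition} the system $(X,T)$ has $\Zz/p_{\max}\Zz$ as a factor. Since $\Zz/p_{\max}\Zz$ is a finite cyclic transformation, it is equicontinuous. (Proposition~\ref{maxfinite} guarantees that $p_{\max}$ is well-defined, i.e.\ that there is a largest such $p$.) So the work is all in the second direction: showing that every equicontinuous factor of $(X,T)$ factors through $\Zz/p_{\max}\Zz$.

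For the upper bound, the strategy is to invoke the partition proximality machinery. I would argue that $(X,T)$ has partition proximality with exactly $p_{\max}$ reference points; then Proposition~\ref{notafactorfinite} shows every equicontinuous factor $(Y,S)$ has $\abs{Y}\le p_{\max}$, and since by \S\ref{preliminaries} the only finite factors of a rank-one subshift are the cyclic transformations $\Zz/p\Zz$, and by Proposition~\ref{maxfinite} any such $p$ divides $p_{\max}$, it will follow that $(Y,S)$ is a factor of $\Zz/p_{\max}\Zz$; combined with the first direction, $\Zz/p_{\max}\Zz$ is then the maximal equicontinuous factor. To establish partition proximality with $p_{\max}$ reference points via Proposition~\ref{mcombinatorialproperty}, it suffices to show that for sufficiently large $n$ and every $h\in\Nn$ there are $n$-blocks $\alpha,\beta$ with $\abs{\alpha}-\abs{\beta}=p_{\max}h$.

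So the crux is the combinatorial claim: $p_{\max}h$ is an achievable difference of $n$-block lengths, for every $h$. This is where the results of \S\ref{expectedness} on blocks of different lengths come in. By Proposition~\ref{updownachievable}, if $\{a_1,\dots,a_l\}$ is a finite subset of the spacer parameter consisting of infinitely-occurring values with at least two distinct elements, and $d$ is its up-down gcd, then every multiple $hd$ is an achievable difference. By Lemma~\ref{updowngcd}, $d$ equals the gcd of the pairwise differences $\abs{a_j-a_{j'}}$. So I want to choose the set of spacer values so that this gcd is exactly $p_{\max}$. The key number-theoretic fact to nail down is: letting $D$ be the gcd of $\{\,\abs{v_n}+a_{m,i}-(\abs{v_n}+a_{m',i'})\,\}=\{\,a_{m,i}-a_{m',i'}\,\}$ over all infinitely-occurring spacer values (equivalently, over all sufficiently large levels, since the spacer parameter is bounded so only finitely many values occur, and only finitely many can occur finitely often), one has $D=p_{\max}$. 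That $D\ge p_{\max}$: if $p=p_{\max}$ divides $\abs{v_n}+a_{m,i}$ for all large $m,i$, then $p$ divides every difference $a_{m,i}-a_{m',i'}$, hence $p\mid D$. That $D\le p_{\max}$, equivalently $D$ is itself one of the $p$'s in the definition of $p_{\max}$: here I use that $D\mid(a_{m,i}-a_{m',i'})$ for all large $m,i,m',i'$, so fixing one infinitely-occurring value $a^*=a_{m_0,i_0}$, every large spacer value is $\equiv a^*\pmod D$; then for a level $n$ large enough that all spacer values at levels $\ge n$ are infinitely-occurring, $\abs{v_{n}}+a_{m,i}\equiv\abs{v_n}+a^*\pmod D$ for all $m\ge n$; and since $\abs{v_{n+1}}$ is a sum of copies of $\abs{v_n}$ and of spacer values, a short induction shifting $n$ upward lets one arrange $D\mid(\abs{v_{n'}}+a_{m,i})$ for some $n'$ — this is exactly the condition for $D$ to be a competitor, forcing $D\le p_{\max}$. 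I expect this last normalization argument — reconciling the ``$\abs{v_n}$ offset'' across levels so that $D$ genuinely satisfies the divisibility condition of Proposition~\ref{Zzncondition} — to be the main obstacle, though it is the same bookkeeping that underlies Proposition~\ref{Zzncondition} itself. One degenerate case to dispatch: if there are \emph{not} two distinct infinitely-occurring spacer values, then the spacer parameter is eventually constant, forcing $V$ periodic and $(X,T)$ finite, contrary to hypothesis; so Proposition~\ref{updownachievable} always applies. With the claim $D=p_{\max}$ in hand, Proposition~\ref{updownachievable} (with $d=D=p_{\max}$) gives the required $n$-blocks, Proposition~\ref{mcombinatorialproperty} gives partition proximality with $p_{\max}$ points, and the theorem follows.
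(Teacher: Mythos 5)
Your overall architecture is exactly the paper's: $\Zz/p_{\max}\Zz$ is a factor by Proposition~\ref{Zzncondition} and~\ref{maxfinite}, and maximality is reduced, via Propositions~\ref{notafactorfinite} and~\ref{mcombinatorialproperty}, to showing that every difference $hp_{\max}$ is realized as a difference of lengths of $n$-blocks. But the combinatorial step you propose for that reduction rests on a false claim. You assert that the up-down gcd $D$ of the (infinitely-occurring) spacer values equals $p_{\max}$; the inequality $p_{\max}\mid D$ is fine, but $D\le p_{\max}$ fails. The obstruction is precisely the ``$\abs{v_n}$ offset'' you flagged: for $D$ to be a competitor in the definition of $p_{\max}$ you need some level $n'$ with $\abs{v_{n'}}\equiv -a^*\pmod D$, and the recursion $\abs{v_{n+1}}\equiv q_n\abs{v_n}+(q_n-1)a^*\pmod D$ need never reach that residue. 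Concretely, take $v_0=0$ and $v_{n+1}=v_n1^3v_n1^6v_n1^3v_n$ for all $n$. Then every spacer value is $3$ or $6$, so $D=3$; but $\abs{v_{n+1}}=4\abs{v_n}+12\equiv\abs{v_n}\equiv 1\pmod 3$ for all $n$, so $\abs{v_n}+a_{m,i}\equiv 1\pmod 3$ always, and $p_{\max}=1$. Your argument would only produce $n$-block length differences that are multiples of $3$, whereas the theorem requires every positive integer to be achievable here (and indeed this subshift is weakly mixing by Theorem~\ref{weaklymixingtheorem}).

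The missing idea is that one must vary not only which spacers appear but also the \emph{number of expected occurrences of $v_n$} in the two blocks. Writing the spacer values beyond level $n$ as $a+l_1d,\dots,a+l_kd$ and $\abs{v_n}+a=lp$ with $p=p_{\max}$, one checks that $\gcd(l,d/p)=1$ — otherwise a larger $p$ would satisfy the divisibility condition, contradicting maximality. The Euclidean algorithm then gives, for each $h$, integers $s,t\ge 0$ with $sd=tlp+hp$; prepending to one block an $n$-block $\gamma$ containing $t$ expected occurrences of $v_n$ (which contributes $tlp$ plus a multiple of $d$ to the length) and using Proposition~\ref{updownachievable} to adjust by the appropriate multiple of $d$ yields $n$-blocks $\alpha,\beta$ with $\abs{\alpha}-\abs{\beta}=sd-tlp=hp$. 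Without this second degree of freedom your construction cannot bridge the gap between $D$ and $p_{\max}$ when they differ.
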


\begin{proof} By Proposition~\ref{maxfinite}, $\Zz/p_{\max}\Zz$ is an equicontinuous factor of $(X, T)$. To see that it is maximal, we show that for sufficiently large $n\in\Nn$ and every $h\in\Nn$, there are $n$-blocks $\alpha$, $\beta$ such that $\abs{\alpha}-\abs{\beta}=hp_{\max}$. Then we apply Propositions~\ref{mcombinatorialproperty} and \ref{notafactorfinite}. 

For notational simplicity, we let $p=p_{\max}$. Since $(X,T)$ has bounded spacer parameter, there is an $n_0$ such that for all $m\geq n_0$ and $1\leq i<q_m$, $a_{m,i}$ occurs infinitely often in the spacer parameter. Fix $n>n_0$ such that for all $m\geq n$ and $1\leq i<q_m$, $p|(\abs{v_n}+a_{m,i})$. Enumerate all spacers beyond the $n$-th level by $a_1,...,a_k$. Let $d$ be the up-down gcd of $\{a_1,...,a_k\}$. By Lemma~\ref{updowngcd}, we may rewrite $\{a_1, \dots, a_k\}$ as $\{a+l_1d,...,a+l_kd\}$ for some $a,l_1,...,l_k\in \Nn$. Since $p|(\abs{v_n}+a_i)$ for each $i\leq k$, we also have $p|d$ and $p|(\abs{v_n}+a)$. Let $l\in\Nn$ be such that $\abs{v_n}+a=lp$.

Note that $\OP{gcd}(l,\frac{d}{p})=1$. Otherwise, there is some $j>1$ with $j|l$ and $jp|d$, and we would get that $jp|(lp+l_id)=\abs{v_n}+a_i$ for any $i\leq k$, which contradicts the maximality of $p=p_{\max}$.

By the Euclidean algorithm, for any $h\in\Nn$ we can find integers $s, t\in\Nn$ so that $s\frac{d}{p}=tl+h$. We will fix $t$ to be the smallest natural number so that this equation holds. Note that $t<\frac{d}{p}$. Otherwise, $t'=t-\frac{d}{p}$ and $s'=s-l$ would be smaller natural numbers satisfying the equation, contradictory to the choice of $t$.

Let $\gamma$ be any $n$-block with $t$ many expected occurrences of $v_n$. Note that $|\gamma|=tlp+\sum_{e=1}^t l_{j_e}d$ for some $1\leq j_1,\dots, j_t\leq k$. Therefore $\abs{\gamma}-tlp$ is a multiple of $d$. By Proposition \ref{updownachievable}, there are $n$-blocks $\alpha'$ and $\beta$ such that
$\gamma$ is an initial segment of $\alpha'$ and $\abs{\alpha'}-\abs{\beta}=sd+|\gamma|-tlp$. Write $\alpha'=\gamma\alpha$. Then $\alpha$ is an $n$-block, and
$$\abs{\alpha}-\abs{\beta}=\abs{\alpha'}-|\gamma|-\abs{\beta}=sd-tlp=hp. $$ 
This completes the proof of the theorem.
\end{proof}

Thus we have completely characterized the maximal equicontinuous factors for rank-one subshifts with bounded spacer parameter.
For rank-one subshifts with unbounded spacer parameter, it is easy to see that they have partion proximality with $1$ reference point, namely $1^\Zz$. Thus it follows that
they have trivial maximal equicontinuous factors.

\section{Weakly Mixing Rank-One Subshifts\label{weakmixing}}

Recall that a topological dynamical system $(X,T)$ is {\em weakly mixing} if for any non-empty open sets $U,V,W,Z\subseteq X$, there is some $l\in\Zz$ so that $T^l(U)\cap V\neq \emptyset$ and $T^l(W)\cap Z\neq \emptyset$. 

\begin{proposition}\label{weakmixingproperty}
Let $(X,T)$ be a rank-one subshift. Then the following are equivalent:
\begin{enumerate}
\item[(i)] $(X,T)$ is weakly mixing;
\item[(ii)] for any $n\in\Nn$ and $k_1,k_2\in\Zz$, there is $l\in \Nn$ such that $E_{n,-l}\cap E_{n,k_1}\neq \emptyset$ and $E_{n,-l}\cap E_{n,k_2}\neq \emptyset$;
\item[(iii)] for any $n\in\Nn$ and $h\in\Nn$, there are $n$-blocks $\alpha$ and $\beta$ such that $\abs{\alpha}-\abs{\beta}=h$.
\end{enumerate}
\end{proposition}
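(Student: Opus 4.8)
The plan is to prove the cycle of implications (i) $\Rightarrow$ (iii) $\Rightarrow$ (ii) $\Rightarrow$ (i). The engine of the whole argument is a dictionary between intersections of the sets $E_{n,k}$ and lengths of $n$-blocks: by Lemma~\ref{vnblockswitness} together with Proposition~\ref{Evnkfacts}(\ref{Evnkshift}), for $n\in\Nn$ and $p,q\in\Zz$ we have $E_{n,p}\cap E_{n,q}\neq\emptyset$ if and only if $p=q$ or $|p-q|$ is the length of an $n$-block. Writing $D_n$ for the set of lengths of $n$-blocks, this says $T^l E_{n,a}\cap E_{n,b}\neq\emptyset$ iff $(b-a)+l\in D_n\cup(-D_n)\cup\{0\}$. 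Alongside it I will use two elementary combinatorial facts. First, for $m\geq n$ every $m$-block is an $n$-block, since an expected occurrence of $v_m$ contains, as a prefix, an expected occurrence of $v_n$; the same coherence gives the nesting $E_{m,k}\subseteq E_{n,k}$. Second, every $n$-block $\alpha$ is a prefix of an $n$-block of length $|\alpha|+|v_M|+a_{M,1}$ for every sufficiently large $M$: embed $\alpha$ in $v_M$ with room after it and read off the subword of $v_{M+1}=v_M1^{a_{M,1}}v_M\cdots$ that starts at the beginning of $\alpha$ in the first copy of $v_M$ and ends at the end of the corresponding copy of $\alpha$ in the second copy of $v_M$. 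In particular, if there are $n$-blocks $\alpha,\beta$ with $|\alpha|-|\beta|=h$, then there are such pairs with $|\beta|$ arbitrarily large (extend $\alpha$ and $\beta$ using a common $M$).

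For (i) $\Rightarrow$ (iii), fix $n$ and $h\geq 1$ and pick $m\geq n$ with $|v_m|>h$. Apply weak mixing with $U=W=Z=E_{m,0}$ and $V=E_{m,h}$ (all non-empty clopen) to get $l\in\Zz$ with $E_{m,-l}\cap E_{m,h}\neq\emptyset$ and $E_{m,-l}\cap E_{m,0}\neq\emptyset$; by the dictionary, $l$ and $h+l$ both lie in $D_m\cup(-D_m)\cup\{0\}$. Since every element of $D_m$ is at least $|v_m|>h$, the case $l=0$ is impossible, and inspecting the sign of $l$ forces either $l,\,l+h\in D_m$ (if $l>0$) or $-l-h,\,-l\in D_m$ (if $l<0$); either way we get $m$-blocks differing in length by $h$, hence $n$-blocks differing in length by $h$. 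The case $h=0$ is trivial.

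For (iii) $\Rightarrow$ (ii), let $n$ and $k_1,k_2\in\Zz$ be given; by the symmetry of the two conclusions we may assume $k_1\leq k_2$, and we put $h=k_2-k_1\geq 0$. Using (iii) and the extension fact, choose $n$-blocks $\alpha,\beta$ with $|\alpha|-|\beta|=h$ and $|\beta|>|k_1|$, and set $l=|\beta|-k_1$, so that $l\geq 1$. Then $k_1+l=|\beta|$ and $k_2+l=|\alpha|$ are both lengths of $n$-blocks, so Lemma~\ref{vnblockswitness} gives $E_{n,-l}\cap E_{n,k_1}\neq\emptyset$ and $E_{n,-l}\cap E_{n,k_2}\neq\emptyset$. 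Finally, for (ii) $\Rightarrow$ (i), let $U,V,W,Z$ be non-empty open sets; by Proposition~\ref{Evnkfacts}(\ref{Evnkcontain}) and the nesting $E_{m,k}\subseteq E_{n,k}$ we may pick a common level $n$ and positions with $E_{n,a}\subseteq U$, $E_{n,b}\subseteq V$, $E_{n,c}\subseteq W$, $E_{n,d}\subseteq Z$. Apply (ii) with $k_1=b-a$ and $k_2=d-c$ to obtain $l\in\Nn$ with $(b-a)+l$ and $(d-c)+l$ in $D_n\cup(-D_n)\cup\{0\}$; by the dictionary this says exactly that $T^l E_{n,a}\cap E_{n,b}\neq\emptyset$ and $T^l E_{n,c}\cap E_{n,d}\neq\emptyset$, hence $T^l U\cap V\neq\emptyset$ and $T^l W\cap Z\neq\emptyset$.

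The main difficulty is the mismatch in shape between the statements: (i) asks for a single shift $l$ effecting two prescribed returns at once, while (ii) and (iii) are phrased through the $n$-blocks, whose combinatorics (\S\ref{expectedness}) controls only \emph{differences} of lengths, not which absolute lengths occur. The two combinatorial facts above are exactly what bridges this gap -- passing to a high level $m$ with $|v_m|>h$ so that $h$ itself is too small to be an $m$-block length, which forces the sign analysis to close, and lengthening blocks at will while keeping a prescribed difference. Everything else (tracking the sign convention in $T^l E_{n,k}=E_{n,k-l}$, and reducing arbitrary open sets to the sets $E_{n,k}$ at a common level via Proposition~\ref{Evnkfacts}(\ref{Evnkcontain})) is routine bookkeeping.
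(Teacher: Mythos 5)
Your proof is correct and rests on the same machinery as the paper's: the dictionary of Lemma~\ref{vnblockswitness} between nonempty intersections $E_{n,k}\cap E_{n,l}$ and lengths of $n$-blocks, together with Proposition~\ref{Evnkfacts}(\ref{Evnkshift}) and (\ref{Evnkcontain}) to shift and to reduce arbitrary open sets to sets $E_{n,k}$ at a common level. The only organizational difference is that you prove the cycle (i)$\Rightarrow$(iii)$\Rightarrow$(ii)$\Rightarrow$(i) --- in particular deriving (iii) directly from weak mixing by a sign analysis at a level $m$ with $\abs{v_m}>h$ --- whereas the paper proves (i)$\Leftrightarrow$(ii) (the forward direction by an immediate contrapositive) and then (ii)$\Leftrightarrow$(iii); your direct route is, if anything, slightly more careful about the degenerate cases $l=0$ and $l=-h$.
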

\begin{proof} We first show (i)$\Leftrightarrow$(ii). To see (i)$\Rightarrow$(ii) by contrapositive, assume (ii) fails. Fix $n\in\Nn$ and $k_1, k_2\in\Zz$ witnessing this failure. Then $U=W=E_{n,0}$, $V=E_{n,k_1}$, and $Z=E_{n,k_2}$ witness the failure of weak mixing.

Next, to show (ii)$\Rightarrow$(i), assume (ii) holds. Let $U,V,W,Z$ be nonempty open. By Proposition~\ref{Evnkfacts}(\ref{Evnkcontain}), we can find $n\in\Nn$ and $k_U, k_V, k_W, k_Z\in\Zz$ such that $E_{n,k_U}\subseteq U$, $E_{n,k_V}\subseteq V$, $E_{n,k_W}\subseteq W$, and $E_{n,k_Z}\subseteq Z$. Let $k_1=k_V-k_U$ and $k_2=k_Z-k_W$. Then there is some $l\in \Nn$ so that $E_{n,-l}\cap E_{n,k_1}\neq \emptyset$ and $E_{n,-l}\cap E_{n,k_2}\neq\emptyset$. By Proposition~\ref{Evnkfacts}(\ref{Evnkshift}), we have that $E_{n,k_U-l}\cap E_{n,k_V}\neq \emptyset$ and $E_{n,k_W-l}\cap E_{n,k_Z}\neq \emptyset$. But $E_{n,k_U-l}=T^{l}(E_{n,k_U})\subseteq T^{l}(U)$ and similarly $E_{n,k_W-l}\subseteq T^{l}(E_{n,k_W})$, so $l$ witnesses the weak mixing property.

Next we show (ii)$\Rightarrow$(iii). First assume (ii). Let $n, h\in\Nn$. Let $l\in\Nn$ be such that $E_{n, -l}\cap E_{n,0}\neq\emptyset$ and $E_{n, -l}\cap E_{n, h}\neq\emptyset$. By Lemma~\ref{vnblockswitness}, there are $n$-blocks $\alpha$ and $\beta$ such that $\abs{\alpha}=h+l$ and $\abs{\beta}=l$. Thus $\abs{\alpha}-\abs{\beta}=h$. Conversely, assume (iii). Note that (iii) implies that for any $n\in\Nn$ and $h\in\Nn$ there are arbitrarily long $\alpha$ and $\beta$ such that $\abs{\alpha}-\abs{\beta}=h$. To see this, consider $m\geq n$ sufficiently large. If $\alpha$ and $\beta$ are $m$-blocks then they are also $n$-blocks. So fix $n\in\Nn$ and $k_1, k_2\in \Zz$. Without loss of generality assume $k_1<k_2$. Let $\alpha$ and $\beta$ be $n$-blocks such that $\abs{\alpha}>k_2$ and $\abs{\alpha}-\abs{\beta}=k_2-k_1$. Let $l=\abs{\alpha}-k_2=\abs{\beta}-k_1>0$. Let $x\in X$ be such that $\alpha$ occurs at position $0$ in $x$ and let $y\in X$ be such that $\beta$ occurs at position $0$ in $y$. Then $x\in E_{n,0}\cap E_{n, k_2+l}$ and $y\in E_{n,0}\cap E_{n, k_1+l}$. Thus witness that $E_{n,-l}\cap E_{n,k_1}\neq \emptyset$ and $E_{n,-l}\cap E_{n,k_2}\neq\emptyset$.
\end{proof}

The following result characterizes weak mixing completely for rank-one subshifts with bounded spacer parameter.

\begin{theorem}\label{weaklymixingtheorem}
Let $(X,T)$ be a rank-one subshift with bounded spacer parameter. Then the following are equivalent:
\begin{enumerate}
\item[(i)] $(X,T)$ is weakly mixing;
\item[(ii)] the maximal equicontinuous factor of $(X, T)$ is trivial;
\item[(iii)] any finite factor of $(X, T)$ is trivial;
\item[(iv)] for any integer $p>1$ and $n\in \Nn$, there are $m\geq n$ and $1\leq i< q_m$ such that $p\!\!\not|(\abs{v_n}+a_{m,i})$.
\end{enumerate}
\end{theorem}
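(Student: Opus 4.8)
The plan is to close the loop of implications (i) $\Rightarrow$ (iii), (iii) $\Leftrightarrow$ (iv), (iv) $\Leftrightarrow$ (ii), and (iv) $\Rightarrow$ (i), leaning on the machinery already assembled: Proposition~\ref{weakmixingproperty}, Proposition~\ref{Zzncondition}, Proposition~\ref{maxfinite}, and above all Theorem~\ref{boundedmef} together with its proof. The initial observation is a matter of bookkeeping: condition (iv) is literally the assertion $p_{\max}=1$, since by definition $p_{\max}$ is the largest $p$ for which some $n$ satisfies $p\mid(\abs{v_n}+a_{m,i})$ for all $m\geq n$ and $1\leq i<q_m$, so no $p>1$ has that property precisely when (iv) holds.

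Granting this, the three ``easy'' links are short. For (iv) $\Leftrightarrow$ (ii): by Theorem~\ref{boundedmef} the maximal equicontinuous factor is $\Zz/p_{\max}\Zz$, which is trivial iff $p_{\max}=1$ iff (iv). For (iii) $\Leftrightarrow$ (iv): every finite factor of a rank-one subshift is a cyclic rotation $\Zz/p\Zz$, and by Proposition~\ref{Zzncondition} such a factor with $p\geq 2$ exists iff there is $n$ with $p\mid(\abs{v_n}+a_{m,i})$ for all $m\geq n$, $1\leq i<q_m$ — exactly the negation of (iv); one may equally route this through Proposition~\ref{maxfinite}, since every finite factor divides $p_{\max}$. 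For (i) $\Rightarrow$ (iii): weak mixing is inherited by factors, and for $p\geq 2$ the rotation $\Zz/p\Zz$ fails weak mixing (no single $l$ can satisfy both $l\equiv 0$ and $l\equiv -1 \pmod{p}$), so a weakly mixing system has no nontrivial finite factor.

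The only implication with real content is (iv) $\Rightarrow$ (i), and the point is that essentially all of the work already lives inside the proof of Theorem~\ref{boundedmef}. Since (iv) gives $p_{\max}=1$, that proof establishes that for all sufficiently large $n\in\Nn$ and every $h\in\Nn$ there are $n$-blocks $\alpha,\beta$ with $\abs{\alpha}-\abs{\beta}=hp_{\max}=h$. To hand this to Proposition~\ref{weakmixingproperty}, which demands the length-difference property for \emph{every} $n$, I would invoke the fact (already used in the proof of Proposition~\ref{weakmixingproperty}) that any $m$-block with $m\geq n$ is also an $n$-block, because an expected occurrence of $v_m$ begins and ends with expected occurrences of $v_n$; thus the property for all large $n$ upgrades to all $n$, and part (iii) $\Rightarrow$ (i) of Proposition~\ref{weakmixingproperty} delivers weak mixing. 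The only step requiring care is precisely this passage from ``sufficiently large $n$'' to ``all $n$''; the genuinely hard combinatorics — manufacturing $n$-blocks whose lengths differ by prescribed multiples — was carried out in Section~\ref{expectedness} and reused in Theorem~\ref{boundedmef}, so no new obstacle arises here beyond careful citation. As an alternative one can prove (i) $\Rightarrow$ (iv) directly by contraposition: if (iv) fails with witness level $n_0$ and integer $p\geq 2$, then every $n_0$-block is a concatenation of $n_0$-blocks with a single expected $v_{n_0}$, each of length $\abs{v_{n_0}}+a_{m,i}$ with $m\geq n_0$ and hence divisible by $p$; so no two $n_0$-blocks differ in length by $1$, and Proposition~\ref{weakmixingproperty} rules out weak mixing.
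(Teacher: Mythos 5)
Your proof is correct and follows essentially the same route as the paper: the equivalences (ii)$\Leftrightarrow$(iv) and (iii)$\Leftrightarrow$(iv) via Theorem~\ref{boundedmef} and Propositions~\ref{Zzncondition}/\ref{maxfinite}, and the hard implication (iv)$\Rightarrow$(i) by extracting from the proof of Theorem~\ref{boundedmef} (with $p_{\max}=1$) the $n$-block length-difference property for large $n$, upgrading to all $n$ via ``$m$-blocks are $n$-blocks,'' and invoking Proposition~\ref{weakmixingproperty}. The one place you deviate is (i)$\Rightarrow$(iii), which you prove by the elementary observation that weak mixing passes to factors and $\Zz/p\Zz$ is not weakly mixing for $p\geq 2$, whereas the paper routes (i)$\Rightarrow$(iv) through Propositions~\ref{weakmixingproperty} and \ref{mcombinatorialproperty} (partition proximality); your version is, if anything, cleaner, and your closing contrapositive argument (all $n_0$-blocks have length divisible by $p$ when (iv) fails) is also sound.
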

\begin{proof} By Theorem~\ref{boundedmef} (ii) and (iv) are equivalent. By Lemma~\ref{maxfinite} (iii) and (iv) are equivalent. By Propositions~\ref{weakmixingproperty} and \ref{mcombinatorialproperty}, (i) implies (iv). By the proof of Theorem~\ref{boundedmef} (iv) implies Proposition~\ref{weakmixingproperty}(iii).
\end{proof}

The equivalence between (i) and (ii) in the above theorem also follows from a general theorem regarding minimal topological dynamical systems admitting a invariant probability measure (see \cite{Auslander}). Here a rank-one subshift with bounded spacer parameter is minimal and uniquely ergodic, and therefore the general theorem applies. 

In the rest of this section we study the weak mixing property for rank-one subshifts with unbounded spacer parameter. The next result gives a sufficient condition.

\begin{proposition}\label{weaklymixingunbounded}
Let $(X,T)$ be a rank-one subshift. Suppose for arbitrarily large $n,n'$, there are $i,i'$ with $1\leq i< q_n$ and $1\leq i< q_{n'}$ such that $a_{n,i}-a_{n',i'}=1$. Then $(X,T)$ is weakly mixing. 
\end{proposition}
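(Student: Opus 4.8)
The plan is to verify condition (iii) of Proposition~\ref{weakmixingproperty}: for any $n\in\Nn$ and any $h\in\Nn$, produce $n$-blocks $\alpha$ and $\beta$ with $\abs{\alpha}-\abs{\beta}=h$. The hypothesis here is weaker than that of Proposition~\ref{onegap} in two ways: the pair of spacers realizing the difference $1$ may live on \emph{two different} levels $n$ and $n'$, and, crucially, these spacers are not assumed to occur infinitely often — we only know such a pair exists arbitrarily high up. So the first thing I would do is reduce to a telescoped generating sequence exactly as in the proof of Proposition~\ref{onegapdifferentlevels}: choose levels $n<N_0<m_0,k_0<N_1<m_1,k_1<\cdots$ so that on the $m_l,k_l$ levels there are spacers differing by $1$, and pass to the generating sequence $(w_t)$ with $w_{N_0+s}=v_{N_s}$. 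Then each such pair of spacers appears on a \emph{single} level of $(w_t)$, namely level $N_0+l$, and any $n$-block for $(w_t)$ is an $n$-block for $(v_t)$.

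After this reduction the situation looks like the hypothesis of Proposition~\ref{onegap}, with one remaining gap: Proposition~\ref{onegap} and Lemma~\ref{mainconstruction} are stated for rank-one subshifts, and in their proofs the assumption that the relevant spacers occur infinitely often is used (via Lemma~\ref{vmsamelength} and the start of the iterative construction) only to guarantee that the initial $n$-block $\alpha_0$ with $q_n^{(m_0)}$ expected occurrences of $v_n$ exists and splits as $\alpha_{0,0}1^{a_j}\alpha_{0,1}$ with $\alpha_{0,0}$ an end segment and $\alpha_{0,1}$ an initial segment of $v_{m_0}$. But for the telescoped sequence this is automatic: there \emph{is} an $n$-block with $q_n^{(m_0)}$ expected occurrences of $v_n$ — just take the word obtained from two consecutive expected $v_{m_0}$'s inside $v_{m_0+1}=w_{N_0+0+1}$ — and Lemma~\ref{vmsamelength} applies to it directly. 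So I would simply re-run the inductive construction of Proposition~\ref{onegap}: maintain $n$-blocks $\alpha_k,\beta_k$ satisfying the hypotheses of Lemma~\ref{mainconstruction} at a level $m_k$ carrying spacers $a_{m_k,j_k}-a_{m_k,i_k}=1$, apply Lemma~\ref{mainconstruction} to $\alpha_k$ deleting the spacer $a_{m_k,i_k}$ and to $\beta_k$ deleting $a_{m_k,j_k}$ (using a common higher spacer $a_{\bar m_k,\bar\imath_k}$), so that $\abs{\alpha_{k+1}}-\abs{\beta_{k+1}}=\abs{\alpha_k}-\abs{\beta_k}+1$, and after $h$ steps set $\alpha=\alpha_h$, $\beta=\beta_h$.

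The main obstacle — really the only subtlety — is bookkeeping: making sure that at each stage of the construction the current $n$-blocks genuinely satisfy the structural hypotheses of Lemma~\ref{mainconstruction} (the distinguished spacer comes from a level $\geq m$, and the two flanking pieces are respectively an end segment and an initial segment of $v_m$), and that fresh levels $m_{k+1}$ with a unit spacer difference remain available above $m_k$ — which they are, precisely because after telescoping every level carries such a pair. Since this is word-for-word the argument already given for Propositions~\ref{onegap} and~\ref{onegapdifferentlevels}, I would keep the write-up brief. Concretely:

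\begin{proof}
By telescoping as in the proof of Proposition~\ref{onegapdifferentlevels}, we may pass to a rank-one generating sequence $(w_t)$ in which, for all sufficiently large levels $\ell$, there are $1\leq i,j<q_\ell$ (for the telescoped cutting parameter) with $a_{\ell,j}-a_{\ell,i}=1$; moreover every $n$-block for $(w_t)$ is an $n$-block for the original generating sequence. The hypothesis of Proposition~\ref{onegap} is thus satisfied for $(w_t)$, so for every $n\in\Nn$ and every $h\in\Nn$ there are $n$-blocks $\alpha$ and $\beta$ (for $(w_t)$, hence for the original sequence) with $\abs{\alpha}-\abs{\beta}=h$. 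By Proposition~\ref{weakmixingproperty}, (iii)$\Rightarrow$(i), $(X,T)$ is weakly mixing.
\end{proof}
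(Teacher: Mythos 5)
Your proof is correct and follows the paper's approach exactly: the paper simply observes that the hypothesis is verbatim that of Proposition~\ref{onegapdifferentlevels}, which yields condition (iii) of Proposition~\ref{weakmixingproperty}. Your write-up re-derives the telescoping step of Proposition~\ref{onegapdifferentlevels} rather than just citing it, but the argument is the same.
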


\begin{proof}
By Proposition~\ref{onegapdifferentlevels} the assumption implies Proposition~\ref{weakmixingproperty}(iii).
\end{proof}

The following gives another sufficient condition in terms of the density of the set of spacer parameters.

\begin{theorem}\label{measureoneweakly}
Let $(X,T)$ be a rank-one subshift with unbounded spacer parameter. If the set of all spacer parameters $\{a_{m, i}:m\in\Nn, 1\leq i<q_m\}$ is a subset of $\Nn$ with density greater than $\frac{1}{2}$, then $(X,T)$ is weakly mixing. 
\end{theorem}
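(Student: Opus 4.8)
The plan is to verify the combinatorial criterion in Proposition~\ref{weakmixingproperty}(iii): for any $n$ and any $h\in\Nn$, produce $n$-blocks $\alpha,\beta$ with $\abs{\alpha}-\abs{\beta}=h$. Since the spacer set $S=\{a_{m,i}:m\in\Nn,\,1\leq i<q_m\}$ has density greater than $\tfrac12$ in $\Nn$, a standard counting argument shows that for every $h\in\Nn$ there exist $a,a'\in S$ with $a-a'=h$: the sets $S$ and $S-h$ each have density $>\tfrac12$, so their intersection has positive density, in particular is nonempty, which gives $a\in S$ with $a-h\in S$. Moreover, because the spacer parameter is unbounded, only finitely many of the values in $S$ can be achieved infinitely often — but actually every value in $S$ is achieved (it is by definition in the range of $(a_{m,i})$), and I would first reduce to the case where the relevant spacer values occur infinitely often. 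This is where a small amount of care is needed: if $a$ or $a'$ occurs only finitely often, one replaces it by a telescoping argument as in Proposition~\ref{onegapdifferentlevels}, grouping levels so that every spacer value of interest reappears at arbitrarily high levels.

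The cleaner route, and the one I would actually write, is to mimic the proof of Proposition~\ref{updownachievable} directly. First I would observe that the density hypothesis forces the set $B$ of \emph{differences} $\{\,\abs{a-a'} : a,a'\in S\,\}$ to contain every positive integer, hence the up-down gcd of any sufficiently large finite subset of $S$ (containing two distinct values realized infinitely often, together with enough spread) equals $1$. Concretely: fix $h\in\Nn$; using density $>\tfrac12$, pick $a,a'\in S$ with $a-a'=h$; pass to a telescoped rank-one generating sequence $(w_t)$ (as in the proof of Proposition~\ref{onegapdifferentlevels}) in which the values $a$ and $a'$ both appear as spacer parameters at arbitrarily high levels, so that both occur infinitely often for $(w_t)$. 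Then apply Proposition~\ref{updownachievable} to the two-element (or finite) set $\{a',a\}$: its up-down gcd divides $a-a'=h$, and by choosing the finite set appropriately (or by directly running the one-step construction of Lemma~\ref{mainconstruction} a single time with $a$ omitted on the $\beta$ side and $a'$ omitted on the $\alpha$ side) we get $n$-blocks with $\abs{\alpha}-\abs{\beta}=h$. Since $n$-blocks for $(w_t)$ are $n$-blocks for the original sequence, Proposition~\ref{weakmixingproperty}(iii) holds, so $(X,T)$ is weakly mixing.

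The key steps in order: (1) from density $>\tfrac12$, deduce that $S-S$ (the set of differences) contains every positive integer $h$ — this is the elementary measure/counting fact $d(S)+d(S-h)>1 \Rightarrow S\cap(S-h)\neq\emptyset$; (2) telescope the generating sequence so the two chosen spacer values recur at arbitrarily high levels, reducing to the ``occurs infinitely often'' setting; (3) invoke Lemma~\ref{mainconstruction}/Proposition~\ref{updownachievable} to manufacture the pair of $n$-blocks with the prescribed length difference $h$; (4) conclude via Proposition~\ref{weakmixingproperty}.

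The main obstacle I anticipate is step~(1) done carefully: density of a subset of $\Nn$ is a limiting notion, so ``$d(S)>\tfrac12$'' only gives $\abs{S\cap[1,N]}>\tfrac{N}{2}$ for large $N$, and one must check that $S\cap[1,N]$ and $(S-h)\cap[1,N]$ (equivalently $S\cap[1+h,N+h]$) together overshoot $[1,N+h]$ for large $N$, forcing overlap; the edge terms contribute only $O(h)$, which is negligible as $N\to\infty$, so the intersection is not just nonempty but infinite. A secondary (minor) point is making sure the telescoping in step~(2) genuinely places both values $a,a'$ infinitely often at high levels while preserving the rank-one structure — but this is exactly the device already used in Proposition~\ref{onegapdifferentlevels}, so it requires no new idea.
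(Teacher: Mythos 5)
Your proof is correct and follows essentially the same route as the paper's: both reduce the density hypothesis to the existence of spacer pairs with a prescribed difference occurring at arbitrarily high levels (using that only finitely many spacer values appear below any fixed level), and then run the telescoping/Lemma~\ref{mainconstruction} machinery to produce $n$-blocks realizing every length difference $h$, concluding via Proposition~\ref{weakmixingproperty}(iii). The only cosmetic difference is that the paper notes that density $>\frac12$ forces infinitely many \emph{consecutive} pairs in the spacer set and then simply cites Proposition~\ref{weaklymixingunbounded}, whereas you extract difference-$h$ pairs directly from the pigeonhole bound $d(S)+d(S-h)>1$ and apply the block construction once per $h$.
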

\begin{proof}
Under the assumption there would be infinitely many pairs of successors $k$ and $k+1$ which appear in the spacer parameter. Since for any $m$, there are only finitely many spacers of the form $a_{n,i}$ where $n\leq m$, the hypothesis of Proposition~\ref{weaklymixingunbounded} holds.
\end{proof}

The next example shows that the density assumption of $\frac{1}{2}$ in Theorem~\ref{measureoneweakly} is sharp, namely, we give an example in which the spacer parameter gives a set of density exactly $\frac{1}{2}$ and the subshift fails to be weakly mixing.

\begin{example}\label{notweaklymixingexample}
Let $v_0=00$. For each $n\geq 0$, let 
$$v_{n+1}=v_nv_n11v_nv_n1111v_nv_n111111v_nv_n...v_nv_n1^{2\abs{v_n}}v_n.$$
So the spacers in $v_{n+1}$ have length $2i$ for each $0\leq i\leq \abs{v_n}$. It is easy to see that all spacers and all $v_n$ are of even length, and therefore all $n$-blocks are of even length. Thus by Proposition~\ref{weakmixingproperty} the subshift is not weakly mixing. However, the spacer set is exactly the set of all even integers, and hence has density $\frac{1}{2}$.
\end{example}

It will be useful later to note that in this construction each $v_{n+1}$ contains an odd number of expected occurrences of $v_n$.

Note that this example violates condition (iv) of Theorem~\ref{weaklymixingtheorem}, which is a non-divisibility condition for the weak mixing property for rank-one subshifts with bounded spacer parameter. We can also ask whether it is possible for a rank-one subshift with unbounded spacer parameter to satisfy this non-divisibility condition  and yet still fail to be weakly mixing. In the following we give such an example.

\begin{example}\label{mainexample}
Let $\langle\cdot,\cdot\rangle:\Nn\times\Nn\to \Nn$ be a bijection satisfying that $\langle n,l\rangle=m$ implies $n\leq m$. Let $(\cdot)_0, (\cdot)_1:\Nn\to\Nn$ be such that for all $m\in\Nn$, $\langle (m)_0,(m)_1\rangle=m$. Let $(p_n)$ enumerate all the primes. Define $v_0=0$ and 
$$ v_{n+1}=v_nv_n1^{a_n}v_n $$
where $a_n$ is the least $a>3 \abs{v_n}$ such that $a\equiv 1\ \mbox{mod}\ p_{(n)_1}$.
\end{example}

In the rest of this section we prove the claimed properties of this subshift. For clarity we will denote this subshift as $Z$, but will use the standard notation for cutting and spacer parameters. Note that for all $n$, $q_n=3$, $a_{n,1}=0$ and $a_{n,2}=a_n$.

\begin{lemma}\label{divisibilitycondition}
The subshift $Z$ satisfies that for any integer $p>1$ and $n\in\Nn$, there are $m\geq n$ and $1\leq i<q_m$ such that $p\!\!\not|(\abs{v_n}+a_{m,i})$.
\end{lemma}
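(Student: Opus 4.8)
The plan is to exploit the fact that each spacer $a_m$ in the construction of $Z$ is congruent to $1$ modulo the prime $p_{(m)_1}$, and that the pairing function $\langle\cdot,\cdot\rangle$ guarantees that for every fixed starting level $n$ and every prime $p_j$, there are infinitely many $m\geq n$ with $(m)_1 = j$. So fix an integer $p>1$ and a level $n\in\Nn$; we must produce $m\geq n$ and $1\leq i<q_m=3$ with $p\nmid(\abs{v_n}+a_{m,i})$. Since $q_m=3$, $a_{m,1}=0$, and $a_{m,2}=a_m$, the only nontrivial case is $i=2$, so it suffices to find $m\geq n$ with $p\nmid(\abs{v_n}+a_m)$.

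First I would pick a prime $p_j$ dividing $p$ (any prime factor works), and then, using the surjectivity of $(\cdot)_1$ together with the constraint $\langle n',l\rangle=m\Rightarrow n'\le m$, locate a level $m\geq n$ with $(m)_1=j$ — concretely, $m = \langle n', j\rangle$ for a suitably large $n'$; such an $m$ is automatically $\geq n'\geq n$ once $n'\geq n$. By the defining property of the construction, $a_m\equiv 1\pmod{p_j}$. Now compute $\abs{v_n}+a_m \pmod{p_j}$: this equals $\abs{v_n}+1 \pmod{p_j}$. The key point is that $\abs{v_n}$ is fixed, so if $\abs{v_n}+1\not\equiv 0\pmod{p_j}$ we are done immediately, since $p_j\mid p$ would then force $p\nmid(\abs{v_n}+a_m)$.

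The one subtlety — and what I expect to be the main obstacle — is the possibility that $\abs{v_n}+1\equiv 0\pmod{p_j}$ for the particular prime $p_j$ we chose; then $a_m\equiv 1\pmod{p_j}$ does not by itself rule out divisibility by $p$. To handle this I would work modulo a higher power of $p_j$, or switch to a different constraint: note that $a_m$ was chosen to be the \emph{least} $a>3\abs{v_n}$ with $a\equiv 1\pmod{p_j}$, but the relevant flexibility is that we may choose \emph{different} values of $m$ (hence different $v_n$-independent data) — more robustly, one observes that it suffices to exhibit \emph{two} levels $m,m'\geq n$ with $(m)_1=(m')_1=j$ for which the corresponding spacers $a_m, a_{m'}$ (each $\equiv 1 \bmod p_j$ but of genuinely different sizes, since $a_{m'}>3\abs{v_{m'}}\gg \abs{v_m}$ when $m'$ is large) cannot both satisfy $p\mid(\abs{v_n}+a_m)$ and $p\mid(\abs{v_n}+a_{m'})$: subtracting would give $p\mid(a_m-a_{m'})$, and by choosing $m'$ with $(m')_1=j$ but also controlling $a_{m'}\bmod p$ via a different prime index is not available — so instead I would simply note $a_m\equiv a_{m'}\equiv 1\pmod{p_j}$ while $a_m\not\equiv a_{m'}\pmod{p}$ can be arranged because the \emph{only} congruence forced on $a_{m'}$ is mod $p_j$, leaving its residue mod $p/p_j^{v_{p_j}(p)}$ essentially free across the infinitely many valid choices of $m'$. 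Thus among infinitely many such $m'$ the value $\abs{v_n}+a_{m'}$ avoids being divisible by $p$. I would streamline this in the writeup by fixing a prime $p_j\mid p$ with $p_j\nmid(\abs{v_n}+1)$ whenever such a prime factor of $p$ exists, and only invoking the "infinitely many candidates" argument in the residual case where every prime factor $p_j$ of $p$ satisfies $p_j\mid(\abs{v_n}+1)$ — but this residual case forces strong divisibility of $\abs{v_n}+1$, which combined with $a_m > 3\abs{v_n}$ and $a_m\equiv 1\pmod{p_j}$ still leaves room to pick $m$ making $\abs{v_n}+a_m$ non-divisible by the full $p$.
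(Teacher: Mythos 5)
There is a genuine gap, and it comes precisely from the step where you declare that ``the only nontrivial case is $i=2$.'' The spacer $a_{m,1}=0$ is not a triviality to be discarded --- it is the whole point. Your argument works cleanly when some prime factor $p_j$ of $p$ satisfies $p_j\nmid(\abs{v_n}+1)$, but in the residual case (every prime factor $p_j$ of $p$ divides $\abs{v_n}+1$) you fall back on the claim that the residue of $a_{m'}$ modulo $p$ is ``essentially free'' over the infinitely many admissible $m'$. That is an unsupported assertion about a completely deterministic sequence: $a_{m'}$ is the \emph{least} $a>3\abs{v_{m'}}$ with $a\equiv 1\pmod{p_{(m')_1}}$, so its residue mod $p$ is dictated by the recursion $\abs{v_{m+1}}=3\abs{v_m}+a_m$, and you have not ruled out that it always lands in the bad class. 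As written, the proof does not close.

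The fix is immediate once you use $i=1$. One may assume $p$ is prime (if the conclusion fails for $p$ it fails for each prime factor of $p$). If $p\nmid\abs{v_n}$, then $p\nmid(\abs{v_n}+a_{m,1})=\abs{v_n}$ and you are done with $i=1$ and any $m\geq n$. Otherwise $p\,|\,\abs{v_n}$, so $p\,|\,(\abs{v_n}+a_m)$ iff $p\,|\,a_m$; taking $m=\langle n,j\rangle\geq n$ with $p=p_j$ gives $a_m\equiv 1\pmod p$, hence $p\nmid a_m$ and $i=2$ works. This is exactly the paper's argument (phrased there as a proof by contradiction: if the lemma failed, both $p\,|\,\abs{v_n}$ and $p\,|\,(\abs{v_n}+a_m)$ would hold, forcing $p\,|\,a_m$ for all $m\geq n$, contradicting $a_{\langle n,j\rangle}\equiv 1\pmod{p}$). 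Note also that your troublesome residual case is vacuously handled by this dichotomy: if every prime factor of $p$ divides $\abs{v_n}+1$, then none divides $\abs{v_n}$, so $p\nmid\abs{v_n}$ and the $i=1$ branch applies.
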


\begin{proof} Otherwise there is a prime $p>1$ and $n\in\Nn$ such that for all $m\geq n$, $p|\abs{v_n}$ and $p|(\abs{v_n}+a_{m})$. Then $p|a_{m}$ for all $m\geq n$. Let $m=\langle n,p\rangle$. Then $m\geq n$ and $a_{m}\equiv 1\ \mbox{mod}\ p$, contradicting $p|a_{m}$.
\end{proof}

\begin{lemma}\label{vndifference}
Let $n\in\Nn$ and let $\alpha,\beta$ be $n$-blocks for the rank-one subshift $Z$. Suppose $\abs{\alpha}>\abs{\beta}$. Then $\abs{\alpha}-\abs{\beta}\geq \abs{v_n}$.
\end{lemma}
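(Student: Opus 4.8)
The plan is to analyze the structure of $n$-blocks for $Z$ using the recursion $v_{m+1}=v_mv_m1^{a_m}v_m$ and the growth condition $a_m>3\abs{v_m}$, which forces spacers at level $m$ to be much longer than $\abs{v_m}$. The key structural fact I want is that every $n$-block, when written as $v_n1^{c_1}v_n1^{c_2}\cdots v_n1^{c_r}$ with each $c_j$ an expected spacer above level $n$, decomposes into maximal runs that are copies of larger $v_m$'s, and the ``discrepancy'' between two $n$-blocks of different lengths is controlled, level by level, by how many higher-level spacers each one omits. Concretely, I would first prove by induction on $m\ge n$: if $\alpha$ is an $n$-block with strictly fewer than $q_n^{(m+1)}=3q_n^{(m)}$ expected occurrences of $v_n$, then $\alpha$ is a subword of some $v_{m'}1^{a_{m'}}v_{m'}$ lying inside a single $v_{m+1}$ (this is essentially the argument in Lemma~\ref{blockequivalence}(ii)$\Rightarrow$(i) and Lemma~\ref{vmsamelength}), and in that situation its length lies in a very restricted set.

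The core of the argument: given $n$-blocks $\alpha,\beta$ with $\abs{\alpha}>\abs{\beta}$, let $m$ be minimal such that both $\alpha$ and $\beta$ have fewer than $q_n^{(m)}$ expected occurrences of $v_n$ is too crude; instead I would take $m$ minimal such that both fit inside two consecutive expected $v_m$'s in some $x\in X$ (equivalently, inside $v_m1^{a_{m'}}v_m$ for appropriate $m'\ge m$, as in the proof of Lemma~\ref{blockequivalence}). By minimality, at level $m-1$ at least one of them — say, generically both — straddles a boundary and hence one of its ``halves'' is a proper end/initial segment of $v_{m-1}$ together with at least one full copy. Then $\abs{\alpha}$ and $\abs{\beta}$ are each of the form $(\text{number of }v_{m-1}\text{'s})\cdot\abs{v_{m-1}} + (\text{sum of included level-}\ge m-1\text{ spacers})$. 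Using $a_{m-1}>3\abs{v_{m-1}}$, the number of level-$(m-1)$ spacers included is pinned down tightly by the length, so if $\abs{\alpha}\ne\abs{\beta}$ then either they include a different number of such large spacers — forcing a difference of at least $a_{m-1}-2\abs{v_{m-1}}>\abs{v_{m-1}}\ge\abs{v_n}$ — or they include the same number of large spacers but differ in the count of full $v_{m-1}$'s, again forcing $\abs{\alpha}-\abs{\beta}\ge\abs{v_{m-1}}\ge\abs{v_n}$. In the remaining case the spacer counts and $v_{m-1}$-counts agree and one descends to level $m-2$, but minimality of $m$ prevents infinite descent, so we bottom out at level $n$ where a difference of lengths with equal numbers of expected $v_n$'s is impossible (Lemma~\ref{singlespacerdifference} gives $\abs{\alpha}-\abs{v_m}=\abs{\beta}-\abs{v_m}=$ a single spacer, but distinct spacers at the same relevant level here differ by much more than $\abs{v_n}$, or are equal).

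The main obstacle I anticipate is bookkeeping the ``straddling'' case cleanly: an $n$-block need not start or end at a level-$m$ boundary, so I must carefully track the end segment of $v_{m-1}$ on the left and the initial segment of $v_{m-1}$ on the right (exactly the $\alpha_0,\alpha_1$ decomposition of Lemma~\ref{mainconstruction}), and argue that two $n$-blocks sharing the property of fitting into two consecutive $v_m$'s but not into fewer structures have lengths that agree modulo the ``unit'' $\abs{v_{m-1}}$ only when their omitted-spacer multisets at level $m-1$ coincide. The crucial numerical input throughout is $a_m>3\abs{v_m}$: it guarantees $a_m - 2\abs{v_m} > \abs{v_m} \ge \abs{v_n}$, so including or omitting even one level-$m$ spacer changes the length by strictly more than $\abs{v_n}$, and it also ensures spacers of different levels cannot ``cancel'' into a small discrepancy. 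I would isolate this as a short arithmetic sublemma before the main induction.
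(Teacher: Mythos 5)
Your overall strategy --- exploit $a_m>3\abs{v_m}$ so that including or omitting a single level-$m$ spacer moves the length by much more than $\abs{v_n}$, and then work level by level --- is the right instinct, and it is the same numerical input the paper uses. But the proposal has a genuine gap at its quantitative core. You assert that if $\alpha$ and $\beta$ contain different numbers of level-$(m-1)$ spacers then $\abs{\alpha}-\abs{\beta}\geq a_{m-1}-2\abs{v_{m-1}}>\abs{v_{m-1}}$. The constant $2$ is not justified: at the junction of two consecutive expected occurrences of $v_m$ separated by a zero gap, the word $v_{m-1}v_{m-1}v_{m-1}$ occurs with all three occurrences expected, so a block containing no level-$(m-1)$ spacer can still contain three full copies of $v_{m-1}$, i.e.\ have length up to $3\abs{v_{m-1}}$. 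Since $a_{m-1}$ is defined as the \emph{least} admissible integer exceeding $3\abs{v_{m-1}}$, the quantity $a_{m-1}-3\abs{v_{m-1}}$ can be as small as $1$, and your claimed lower bound $>\abs{v_{m-1}}\geq\abs{v_n}$ evaporates. To rescue it one must harvest an extra $\abs{v_n}$ from the fact that the block containing the large spacer also contains a full expected occurrence of $v_n$, and one must first cancel the spacers the two blocks have in common before making any such comparison. Your deferred ``arithmetic sublemma'' (that spacers at different levels cannot conspire to produce a small net discrepancy) is precisely the crux of the whole lemma, and the proposal does not supply it; nor is the descent bookkeeping with partial end/initial segments of $v_{m-1}$, which you correctly flag as the obstacle, actually resolved.

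The paper organizes the same idea so that none of this bookkeeping is needed. For each $m\geq n$ it counts the number $N_\alpha(m)$ of occurrences of $1^{a_m}$ between expected occurrences of $v_n$ in $\alpha$ (``$m$-gaps''). If $N_\alpha(m)=N_\beta(m)$ for all $m$, the spacer contributions cancel exactly and $\abs{\alpha}-\abs{\beta}$ is a positive multiple of $\abs{v_n}$. Otherwise it takes the \emph{highest} level $n'$ of disagreement and performs a surgery, preserving the length difference and the property of being an $n$-block, which excises the matched gaps above level $n'$ together with the smaller common number of $n'$-gaps. This reduces to the case where one block has no $m$-gap for any $m\geq n'$ --- hence lies inside a run of at most three consecutive $v_{n'}$'s and has length at most $3\abs{v_{n'}}$ --- while the other contains an $n'$-gap and a full $v_n$, hence has length at least $\abs{v_n}+a_{n'}>\abs{v_n}+3\abs{v_{n'}}$. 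I recommend reworking your argument along these lines, tracking only gap counts and taking the top level of disagreement, rather than descending through levels with partial segments.
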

\begin{proof} For each $m\geq n$, we refer to exactly $a_m$ many spacers in between occurrences of $v_n$ as an {\em $m$-gap}. Thus an $m$-gap is an occurrence of $1^{a_m}$ in between two expected occurrences of $v_n$. Any $n$-block is a concatenation of disjoint expected occurrences of $v_n$ and $m$-gaps for $m\geq n$. For any $n$-block $\alpha$ and $m\geq n$, let $N_\alpha(m)$ be the number of $m$-gaps which occur in $\alpha$. Of course, for large enough $m$, $N_\alpha(m)=0$.

To prove the lemma, let $\alpha$, $\beta$ be $n$-blocks with $\abs{\alpha}>\abs{\beta}$. If $N_\alpha(m)=N_\beta(m)$ for all $m\geq n$, $\alpha$ and $\beta$ must have different numbers of expected occurrences of $v_n$. Thus $\abs{\alpha}-\abs{\beta}\geq \abs{v_n}$. Otherwise, suppose $n'\geq n$ is the largest such that $N_\alpha(n')\neq N_\beta(n')$. Thus for $m>n'$ we still have $N_\alpha(m)=N_\beta(m)$. 

Let $m_0$ be the largest $m>n'$ such that $N_\alpha(m)=N_\beta(m)\neq 0$. Note that $N_\alpha(m_0)\leq 3$ by the maximality of $m_0$. If $N_\alpha(m_0)\geq 2$, then between any two consecutive $m_0$-gaps there must be an expected occurrence of $v_{m_0}v_{m_0}v_{m_0}$. Let $\alpha_0$ be the part of $\alpha$ that is before the first $m_0$-gap, and let $\alpha_1$ be the part of $\alpha$ after the last $m_0$-gap. Then $\alpha_0$ is an end segment of $v_{m_0}v_{m_0}$ and $\alpha_1$ is an initial segment of $v_{m_0}$. Let $\beta_0$ and $\beta_1$ be similarly defined. Now consider $\alpha'=\alpha_0\alpha_1$ and $\beta'=\beta_0\beta_1$. Then $\alpha'$ and $\beta'$ are $n$-blocks, $\abs{\alpha'}-\abs{\beta'}=\abs{\alpha}-\abs{\beta}$, and for all $m\geq n$, $N_{\alpha'}(m)-N_{\beta'}(m)=N_{\alpha}(m)-N_{\beta}(m)$. In particular, we still have $\abs{\alpha'}>\abs{\beta'}$, $N_{\alpha'}(n')\neq N_{\beta'}(n')$ and for all $m>n'$, $N_{\alpha'}(m)=N_{\beta'}(m)$. Of course, $N_{\alpha'}(m_0)=N_{\beta'}(m_0)=0$. 

By repeating the construction in the above paragraph, we may assume that $N_\alpha(m)=N_{\beta}(m)=0$ for all $m>n'$. In fact, we may even apply the construction at the $n'$-th level to remove the smaller number of $n'$-gaps in $\alpha$ and $\beta$. Thus we may assume that $N_\alpha(n')=0$ or $N_\beta(n')=0$. 

If $N_\alpha(n')=0$ and $N_\beta(n')>0$, then $\abs{\alpha}\leq 3\abs{v_{n'}}<a_{n'}<\abs{\beta}$, contrary to our assumption. Thus $N_{\beta}(n')=0$ and $N_{\alpha}(n')>0$. Again $\abs{\beta}\leq 3\abs{v_{n'}}$. Since $\alpha$ is an $n$-block, we have $\abs{\alpha}\geq \abs{v_n}+a_{n'}>\abs{v_n}+3\abs{v_{n'}}$. Thus $\abs{\alpha}-\abs{\beta}\geq \abs{v_n}$.
\end{proof}

By Proposition~\ref{weakmixingproperty}(iii), $Z$ is not weakly mixing. In summary, we have shown that $Z$ satisfies the conditions (ii), (iii), and (iv) in Theorem~\ref{weaklymixingtheorem} but it fails to be weakly mixing. Thus for rank-one subshifts with unbounded spacer parameter, these conditions do not characterize weak mixing.

\section{Mixing Rank-One Subshifts\label{mixing}}

Recall that a topological dynamical system $(X,T)$ is {\em mixing} if for any non-empty open sets $U,V\subseteq X$, there is an $L\in \Nn$ such that for any $l\geq L$, $T^l(U)\cap V\neq \emptyset$.

\begin{proposition}\label{mixingproperty}
Let $(X,T)$ be a rank-one subshift. Then the following are equivalent:
\begin{enumerate}
\item[(i)] $(X,T)$ is mixing;
\item[(ii)] for any $n\in\Nn$ and $k\in \Zz$, there is $L\in \Nn$ such that for any $l\geq L$, $E_{n,-l}\cap E_{n,k}\neq\emptyset$;
\item[(iii)] for any $n\in\Nn$, there is $H\in \Nn$ such that for any $h\geq H$, there is an $n$-block $\alpha$ with $\abs{\alpha}=h$.
\end{enumerate}
\end{proposition}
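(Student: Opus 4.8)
The plan is to reuse, almost verbatim, the three-way dictionary established for weak mixing in Proposition~\ref{weakmixingproperty}: arbitrary open sets $\leftrightarrow$ the clopen sets $E_{n,k}$ $\leftrightarrow$ lengths of $n$-blocks. Concretely, I would prove (i)$\Leftrightarrow$(ii) and then (ii)$\Leftrightarrow$(iii), in each case simply assembling the facts collected in Proposition~\ref{Evnkfacts} together with Lemma~\ref{vnblockswitness}.

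For (i)$\Rightarrow$(ii) I argue by contraposition. If (ii) fails, fix $n\in\Nn$ and $k\in\Zz$ for which the set of $l$ with $E_{n,-l}\cap E_{n,k}=\emptyset$ is unbounded above. Each $E_{n,k}$ is a nonempty clopen set: by Proposition~\ref{Evnkfacts}(\ref{Evnkshift}) it is a shift of $E_{n,0}$, and $E_{n,0}$ contains a shift of any element of $X\setminus\{1^\Zz\}$ (take its expected occurrence of $v_n$ to position $0$). Putting $U=E_{n,0}$ and $V=E_{n,k}$ and using Proposition~\ref{Evnkfacts}(\ref{Evnkshift}), $T^l(U)\cap V=E_{n,-l}\cap E_{n,k}$ is empty for arbitrarily large $l$, so $(X,T)$ is not mixing. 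For (ii)$\Rightarrow$(i), given nonempty open $U,V$, Proposition~\ref{Evnkfacts}(\ref{Evnkcontain}) yields a common $n$ and $k_U,k_V$ with $E_{n,k_U}\subseteq U$ and $E_{n,k_V}\subseteq V$; applying (ii) to $n$ and $k=k_V-k_U$ gives $L$ with $E_{n,-l}\cap E_{n,k}\neq\emptyset$ for all $l\ge L$, and a suitable power of $T$ (again Proposition~\ref{Evnkfacts}(\ref{Evnkshift})) converts this to $T^l(E_{n,k_U})\cap E_{n,k_V}\neq\emptyset$, hence $T^l(U)\cap V\neq\emptyset$, for all $l\ge L$.

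For (ii)$\Leftrightarrow$(iii) the bridge is Lemma~\ref{vnblockswitness}: when $j<j'$ in $\Zz$, $E_{n,j}\cap E_{n,j'}\neq\emptyset$ iff there is an $n$-block of length $j'-j$. For (ii)$\Rightarrow$(iii), fix $n$ and apply (ii) with $k=0$ to get $L$; then for every $l\ge\max(L,1)$ we have $-l<0$ and $E_{n,-l}\cap E_{n,0}\neq\emptyset$, so by the lemma there is an $n$-block of length $l$, which is (iii) with $H=\max(L,1)$. For (iii)$\Rightarrow$(ii), fix $n$ and $k$, take $H$ as in (iii), and set $L=\max(H-k,\ \abs{k}+1)$; for every $l\ge L$ one checks $-l<k$ and $k+l\ge H$, so Lemma~\ref{vnblockswitness} applied to the pair $(-l,k)$ produces an $n$-block of length $k+l$ and hence $E_{n,-l}\cap E_{n,k}\neq\emptyset$, which is exactly (ii).

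I do not anticipate a real obstacle: all of the combinatorial substance is already packaged in Proposition~\ref{Evnkfacts} and Lemma~\ref{vnblockswitness}, and this argument parallels the proof of Proposition~\ref{weakmixingproperty} closely. The one spot requiring care is (iii)$\Rightarrow$(ii), where the single threshold $L$ must be chosen so that \emph{every} $l\ge L$ simultaneously makes Lemma~\ref{vnblockswitness} applicable in the correct orientation ($-l<k$) and makes the required block length $k+l$ at least $H$; a one-line check on the sign of $k$ settles this. It is also worth noting that, unlike the weak-mixing argument, no passage to ``arbitrarily long blocks'' is needed here, since condition (iii) already furnishes $n$-blocks of \emph{every} length beyond $H$.
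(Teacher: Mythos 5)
Your proposal is correct and uses the same machinery as the paper's proof (Proposition~\ref{Evnkfacts} for the translation to the sets $E_{n,k}$ and Lemma~\ref{vnblockswitness} for the translation to $n$-block lengths); the only cosmetic difference is that you prove (i)$\Leftrightarrow$(ii) and (ii)$\Leftrightarrow$(iii) separately, whereas the paper runs the cycle (i)$\Rightarrow$(ii)$\Rightarrow$(iii)$\Rightarrow$(i), with your (iii)$\Rightarrow$(ii) and (ii)$\Rightarrow$(i) together amounting to the paper's (iii)$\Rightarrow$(i). All the threshold and sign checks you flag are handled correctly.
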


\begin{proof} (i)$\Rightarrow$(ii): Suppose $(X, T)$ is mixing. Let $n\in\Nn$ and $k\in \Zz$. Consider $U=E_{n,0}$ and $V=E_{n, k}$. Let $L\in\Nn$ be such that for any $l\geq L$, $T^l(U)\cap V\neq\emptyset$. Then $E_{n,-l}\cap E_{n,k}\neq\emptyset$.

(ii)$\Rightarrow$(iii): Assume (ii) holds and let $n\in\Nn$. Then there is $H\in\Nn$ such that for any $h\geq H$, $E_{n,-h}\cap E_{n,0}\neq\emptyset$. By Lemma~\ref{vnblockswitness} there is an $n$-block $\alpha$ of length $\abs{\alpha}=h$ for all $h\geq H$.

(iii)$\Rightarrow$(i): Assume (iii) and let $U, V\subseteq$ be non-empty open sets. There is a large enough $n\in\Nn$ and $k_1, k_2\in\Nn$ such that $E_{n,k_1}\subseteq U$ and $E_{n, k_2}\subseteq V$. Let $H\in\Nn$ be such that for all $h\geq H$, there is an $n$-block $\alpha$ with $\abs{\alpha}=h$. Let $L\in\Nn$ be such that $L\geq H-k_2+k_1$. Then for all $l\geq L$, $l+k_2-k_1\geq H$, and Lemma~\ref{vnblockswitness} gives $E_{n,0}\cap E_{n, k_2-k_1+l}\neq\emptyset$. It follows that $T^l(E_{n, k_1})\cap E_{n,k_2}\neq\emptyset$, and $T^l(U)\cap V\neq\emptyset$.
\end{proof}

The following result shows that rank-one subshifts with bounded spacer parameter are never mixing. 

\begin{theorem}\label{mixingneverholds}
Let $(X,T)$ be a rank-one subshift with bounded spacer parameter. Then $(X,T)$ is not mixing.
\end{theorem}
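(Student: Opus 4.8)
The plan is to contradict the characterization of mixing given in Proposition~\ref{mixingproperty}, specifically the equivalence (i)$\Leftrightarrow$(iii). Suppose $(X,T)$ is mixing and let $B>0$ bound the spacer parameter. By Proposition~\ref{mixingproperty}(iii), for each $n\in\Nn$ there is $H_n\in\Nn$ so that every integer $h\geq H_n$ is realized as the length of some $n$-block. I want to show this is impossible once $n$ is chosen large enough, by producing, for arbitrarily large $h$, a ``gap'' in the set of achievable $n$-block lengths.

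The key tool is Lemma~\ref{vkmisses} (together with Lemma~\ref{vklengthinblocks} and Lemma~\ref{singlespacerdifference}), which says: if the spacer parameter is bounded by $B$ and $\abs{v_n}>B$, then for $m\geq n$ and $0\leq d<\abs{v_n}$, we can have $E_{n,0}\cap E_{n,\abs{v_m}+d}\neq\emptyset$ only when $d\leq B$. Translating via Lemma~\ref{vnblockswitness}: there is an $n$-block of length $\abs{v_m}+d$ only if $d\leq B$. So the interval of integers $[\abs{v_m}+B+1,\ \abs{v_m}+\abs{v_n}-1]$ contains \emph{no} $n$-block length. First I would fix $n$ with $\abs{v_n}>B$ (this is possible since the $v_n$ have strictly increasing length as the subshift is infinite — indeed each $v_{n+1}$ properly contains $v_n$ and $q_n>1$). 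Then for this $n$ I would choose $m\geq n$ large enough that $\abs{v_m}+B+1 \geq H_n$ (possible since $\abs{v_m}\to\infty$), noting also that $\abs{v_n}-1 > B$ so the forbidden interval $[\abs{v_m}+B+1,\ \abs{v_m}+\abs{v_n}-1]$ is nonempty. Any $h$ in this interval is $\geq H_n$ yet is not the length of any $n$-block, contradicting Proposition~\ref{mixingproperty}(iii).

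So the skeleton is: (1) assume mixing, invoke Proposition~\ref{mixingproperty}(iii) to get $H_n$ for each $n$; (2) pick $n$ with $\abs{v_n}>B+1$; (3) pick $m\geq n$ with $\abs{v_m}+B+1\geq H_n$; (4) apply Lemma~\ref{vkmisses}/Lemma~\ref{vnblockswitness} to see that no $n$-block has length in the nonempty interval $[\abs{v_m}+B+1,\ \abs{v_m}+\abs{v_n}-1]$; (5) derive a contradiction. The only mild subtlety — and the step I'd be most careful about — is matching up the precise inequalities in Lemma~\ref{vkmisses}: it is stated for $0\leq d<\abs{v_n}$ and gives $d\leq B$, so the forbidden range of lengths is $\abs{v_m}+d$ with $B<d<\abs{v_n}$, i.e. $B+1\leq d\leq \abs{v_n}-1$; I need $\abs{v_n}-1>B$ (hence require $\abs{v_n}\geq B+2$) to guarantee this range is nonempty, and I should double-check Lemma~\ref{vkmisses} only rules out lengths of the form $\abs{v_m}+d$ for a \emph{single} $m$, which is exactly what I use. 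Everything else is routine.

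\begin{proof}
Let $B>0$ bound the spacer parameter. Since $(X,T)$ is infinite, the words $v_n$ have strictly increasing lengths, so we may fix $n\in\Nn$ with $\abs{v_n}>B+1$. Suppose toward a contradiction that $(X,T)$ is mixing. By Proposition~\ref{mixingproperty} there is $H\in\Nn$ such that for every $h\geq H$ there is an $n$-block of length $h$. Choose $m\geq n$ large enough that $\abs{v_m}+B+1\geq H$. Let $d$ be any integer with $B<d<\abs{v_n}$; such $d$ exists because $\abs{v_n}-1>B$. Then $h:=\abs{v_m}+d\geq \abs{v_m}+B+1\geq H$, so by the choice of $H$ there is an $n$-block $\alpha$ with $\abs{\alpha}=h=\abs{v_m}+d$. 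By Lemma~\ref{vnblockswitness}, $E_{n,0}\cap E_{n,\abs{v_m}+d}\neq\emptyset$. But $0\leq d<\abs{v_n}$ and $d>B$, contradicting Lemma~\ref{vkmisses}. Hence $(X,T)$ is not mixing.
\end{proof}
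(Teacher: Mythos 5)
Your proof is correct and follows essentially the same route as the paper: fix $n$ with $\abs{v_n}>B+1$, and use Lemma~\ref{vkmisses} to exhibit arbitrarily large forbidden lengths of the form $\abs{v_m}+d$ with $B<d<\abs{v_n}$, contradicting mixing. The only cosmetic difference is that you invoke Proposition~\ref{mixingproperty}(iii) and translate back through Lemma~\ref{vnblockswitness}, whereas the paper works directly with Proposition~\ref{mixingproperty}(ii) and the sets $E_{n,0}\cap E_{n,l}$; these are interchangeable by that same lemma.
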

\begin{proof}
Let $B$ be a bound on the spacer parameter and let $n\in\Nn$ be so that $\abs{v_n}>B+1$. Toward a contradiction, assume that $(X,T)$ is mixing. Then By Proposition~\ref{mixingproperty}(ii), there is some $L\in \Nn$ so that for all $l\geq L$, $E_{n,-l}\cap E_{n,0}\neq \emptyset$, or equivalently $E_{n,0}\cap E_{n,l}\neq \emptyset$.

Let $m\geq n$ be sufficiently large so that $\abs{v_m}>L$. Then by Lemma \ref{vkmisses}, $E_{n,0}\cap E_{n,\abs{v_m}+B+1}=\emptyset$. But $\abs{v_m}+B+1>\abs{v_m}>L$, which contradicts that $E_{n,0}\cap E_{n,l}\neq \emptyset$ for all $l\geq L$.
\end{proof}

In the rest of this section we consider rank-one subshifts with unbounded spacer parameter. We first give an easy example of a mixing subshift, which is an topological analog of the staircase transformation defined by Ornstein \cite{Ornstein}.

\begin{example} Define $v_0=0$ and
$$v_{n+1}=v_nv_n1v_n11v_n111v_n...v_n1^{\abs{v_n}}v_n.$$ 
Thus $q_n=\abs{v_n}+2$ and $a_{n,i}=i-1$ for all $1\leq i\leq \abs{v_n}+1$. We claim that this subshift is mixing by Proposition~\ref{mixingproperty}(iii). In fact, for any $n\in\Nn$, let $H=\abs{v_n}$. Then for all $h\geq H$ there is an $n$-block $\alpha$ with $\abs{\alpha}=h$. To see this, let $m\geq n$ be sufficiently large such that $\abs{v_m}>h$. Then there is $1\leq i<q_m$ with $a_{m,i}=h-\abs{v_n}$. Consider the $m$-block $v_m1^{a_{m,i}}$. This is also an $n$-block. Let $\alpha$ be the end segment of $v_m1^{a_{m,i}}$ starting with the last expected occurrence of $v_n$. Then $\alpha$ is an $n$-block of the form $v_n1^{h-\abs{v_n}}$. It is obvious that $\abs{\alpha}=h$. 
\end{example}

In general, let $A=\{a_{n,i}:n\in\Nn, 1\leq i<q_n\}$ be the set of all entires of the spacer parameter sequence. Then mixing can be guaranteed by appropriate largeness conditions on $A$ as in the following two results.

\begin{proposition}\label{tailimpliesmixing}
Let $(X,T)$ be a rank-one subshift. Suppose the set $A=\{a_{n,i}:n\in\Nn, 1\leq i<q_n\}$ contains a tail of $\Nn$, i.e. there is $M\in\Nn$ such that for all $a>M$, $a\in A$. Then $(X,T)$ is mixing.
\end{proposition}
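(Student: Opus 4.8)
The plan is to verify condition (iii) of Proposition~\ref{mixingproperty}: for every $n\in\Nn$, there should be $H\in\Nn$ such that every $h\geq H$ is realized as the length of some $n$-block. Fix $n$ and recall that any $n$-block is a concatenation of expected occurrences of $v_n$ interspersed with spacer blocks $1^a$, where each such $a$ is a spacer parameter from some level $\geq n$. The key observation is that $v_{n+1}$ already contains an $n$-block of the form $v_n 1^{a_{n,i}} v_n$ for each $1\leq i<q_n$, hence more flexibly, by choosing a large level $m\geq n$ and looking inside $v_m$, we can produce an $n$-block that ends with a single expected occurrence of $v_n$ followed by a spacer $1^a$, i.e.\ an $n$-block of length $\abs{v_n}+a$, for any $a$ appearing as a spacer parameter at some level $\geq n$.

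First I would argue that cofinitely many values of $\Nn$ appear as spacer parameters \emph{at arbitrarily high levels}. Since $A$ contains the tail $(M,\infty)$, and since only finitely many spacer entries $a_{n',i}$ have $n'\leq n$, every integer $a>\max(M,\text{those finitely many values})$ occurs as $a_{m',i}$ for some $m'>n$. Call this threshold $M'$. Then, as in the staircase example preceding the statement, for each $a>M'$ pick $m\geq n$ large enough that $v_m$ contains an expected occurrence of $v_n$ whose following spacer has length exactly $a$ — concretely, take the $n$-block $v_m 1^a$ (which is an initial-type segment of $v_{m+1}$, hence a genuine $n$-block by the block machinery of \S\ref{expectedness}) and cut off its end segment starting with the last expected occurrence of $v_n$; this yields an $n$-block $v_n 1^a$ of length $\abs{v_n}+a$. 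Therefore every integer of the form $\abs{v_n}+a$ with $a>M'$ is an $n$-block length, which already covers every $h>\abs{v_n}+M'$. Setting $H=\abs{v_n}+M'+1$ gives condition (iii).

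The routine verifications are: (a) that $v_m 1^a$ really is an $n$-block for suitable $m$ (immediate from the definition of $n$-block together with the fact that $v_m 1^a$ occurs inside $v_{m+1}$ with all displayed occurrences of $v_n$ expected), and (b) that truncating an $n$-block to the end segment beginning at an expected occurrence of $v_n$ yields an $n$-block — this follows from the coherence of expected occurrences noted after Proposition~\ref{Evnkfacts} and from Lemma~\ref{blockequivalence}. The only point requiring a little care — the main obstacle, such as it is — is bookkeeping the threshold: one must ensure the value $a$ we want is genuinely witnessed \emph{above} level $n$ (not merely somewhere in the whole parameter array), which is why we enlarge $M$ past the finitely many low-level spacer entries before invoking the tail hypothesis. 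Once $H$ is produced uniformly in this way, Proposition~\ref{mixingproperty} gives mixing.
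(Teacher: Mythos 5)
Your proposal is correct and follows essentially the same route as the paper's proof: both reduce to condition (iii) of Proposition~\ref{mixingproperty}, discard the finitely many spacer entries below level $n$ so that every sufficiently large $a$ is witnessed as $a_{m,i}$ with $m\geq n$, and then truncate the block $v_m1^{a}$ to the end segment $v_n1^{a}$ of length $\abs{v_n}+a$. The threshold bookkeeping you flag is exactly the point the paper also handles, and your $H=\abs{v_n}+M'+1$ matches its choice $H>\abs{v_n}+M$.
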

\begin{proof}
Fix $n\in\Nn$. Since there are only finitely many spacer parameters of the form $a_{n',i}$ where $n'<n$, there is $M$ such that for all $a>M$, there is $m\geq n$ and $1\leq i<q_m$ with $a_{m,i}=a$. Let $H$ be such that $H> \abs{v_n}+M$. Then for any $h\geq H$, let $m\geq n$ and $1\leq i<q_m$ be such that $a_{m,i}=h-\abs{v_n}>M$. There is an $m$-block of the form $v_m1^{a_{m,i}}$. This $m$-block is also an $n$-block. Let $\alpha$ be the end segment of this $m$-block starting with the last expected occurrence of $v_n$. Then $\alpha$ is of the form $v_n1^{h-\abs{v_n}}$, and hence $\abs{\alpha}=h$.
\end{proof}

\begin{proposition}\label{increasinggapsmixing}
Let $(X,T)$ be a rank-one subshift. Let $(b_k: k\in\Nn)$ enumerate the elements of the set $\Nn\setminus A=\Nn\setminus \{a_{n,i}:n\in\Nn, 1\leq i<q_n\}$ in the increasing order. If $\liminf_k (b_{k+1}-b_k)\to\infty$, then $(X,T)$ is mixing.
\end{proposition}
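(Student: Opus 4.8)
The plan is to verify condition (iii) of Proposition~\ref{mixingproperty}: for each fixed $n\in\Nn$ one must exhibit $H\in\Nn$ such that every $h\ge H$ is the length of some $n$-block. The guiding idea is that the hypothesis $b_{k+1}-b_k\to\infty$ forces $A=\{a_{m,i}:m\in\Nn,\ 1\le i<q_m\}$ to be so dense that, after discarding the finitely many spacer parameters occurring at levels $\le n$, the tail $A_{\ge n+1}$ (hence also $A_{\ge n}$) contains every sufficiently large integer with the possible exception of the sparse set $\{b_k\}$.

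First I would record two cheap families of $n$-blocks. For $a\in A_{\ge n}$, the word $v_n1^{a}$ is an $n$-block: picking $m'\ge n$ and $i'$ with $a=a_{m',i'}$, the factor $v_{m'}1^{a}v_{m'}$ occurs inside $v_{m'+1}$, which is a prefix of $V$, with both copies of $v_{m'}$ expected, and since $v_n$ is an end segment of $v_{m'}$ this displays $v_n1^{a}$ flanked by two expected occurrences of $v_n$ in $V$. Using instead that $v_{n+1}$ is an end segment of $v_m$ for every $m\ge n+1$, the same reasoning shows $v_{n+1}1^{a}$ is an $n$-block for every $a\in A_{\ge n+1}$. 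Consequently every integer $\abs{v_n}+a$ with $a\in A_{\ge n}$, and every integer $\abs{v_{n+1}}+a$ with $a\in A_{\ge n+1}$, is realized as an $n$-block length.

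Now I would fix the threshold. Let $B$ be the largest spacer parameter occurring at a level $\le n$ (finite, as each $q_m<\infty$); then $A_{\ge n}$ and $A_{\ge n+1}$ both contain $A\cap(B,\infty)$, so any element of $A$ that exceeds $B$ and is not a $b_k$ belongs to both tails. Using $b_{k+1}-b_k\to\infty$, choose $K>B$ large enough that any two distinct terms of $(b_k)$ that both exceed $K$ differ by more than $\abs{v_{n+1}}$, and set $H=\abs{v_{n+1}}+K+1$. Given $h\ge H$, suppose toward a contradiction that $h$ is realized by neither family, i.e.\ $h-\abs{v_n}\notin A_{\ge n}$ and $h-\abs{v_{n+1}}\notin A_{\ge n+1}$. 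Both numbers exceed $B$ (the smaller being $h-\abs{v_{n+1}}\ge K+1$), so neither lies in $A$; hence $h-\abs{v_n}$ and $h-\abs{v_{n+1}}$ are both terms of $(b_k)$. They are distinct, since they differ by $\abs{v_{n+1}}-\abs{v_n}>0$, and both exceed $K$; yet their difference $\abs{v_{n+1}}-\abs{v_n}$ is strictly less than $\abs{v_{n+1}}$, contradicting the choice of $K$. Therefore every $h\ge H$ is an $n$-block length, and Proposition~\ref{mixingproperty} yields that $(X,T)$ is mixing.

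I do not anticipate a substantial obstacle; the delicate points are purely bookkeeping: separating the finitely many low-level spacer parameters from the tail so that ``$a\in A$ and $a>B$'' forces $a\in A_{\ge n}$, checking that $v_n1^{a}$ and $v_{n+1}1^{a}$ satisfy the precise definition of an $n$-block (an occurrence in $V$ bordered by expected occurrences of $v_n$), and the harmless case $n=0$, where $\abs{v_0}=1$ but the spacer $a=h-\abs{v_0}$ used is large. If desired one can use all $q_n$ suffixes of $v_{n+1}$ beginning at an expected occurrence of $v_n$ instead of just the two extreme ones, but two already suffice because $q_n\ge2$.
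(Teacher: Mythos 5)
Your proof is correct and follows essentially the same strategy as the paper: verify Proposition~\ref{mixingproperty}(iii) by realizing each large $h$ as $c+a$ with $a\in A$ for one of two fixed prefix lengths $c_1<c_2$, and use the growing gaps of $\Nn\setminus A$ to rule out that both $h-c_1$ and $h-c_2$ miss $A$. The only (immaterial) difference is your choice of second prefix: you use $v_{n+1}1^{a}$ where the paper uses $v_n1^{a_{n,q_n-1}}v_n1^{a}$.
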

\begin{proof}
Fix $n\in\Nn$. Let $A_n=\{a_{m,i}:m> n, 1\leq i<q_m\}$. Since $A\backslash A_n$ is finite, we assume without loss of generality that $A=A_n$ by only considering large enough elements. This guarantees that for any $a\in A$, $v_n1^a$ is an $n$-block.

Since $\liminf_k (b_{k+1}-b_k)\to \infty$, there is $K$ such that for all $k>K$, $b_{k+1}-b_k>\abs{v_n}+a_{n,q_{n}-1}$. Note that elements of $\Nn\backslash A$ which are no smaller than $b_K$ cannot be within $\abs{v_n}+a_{n,q_{n}-1}$ of each other.

Let $H=2\abs{v_n}+a_{n,q_n-1}+b_K$. We show that for any $h\geq H$, there is an $n$-block $\alpha$ with $\abs{\alpha}=h$. For this let $h\geq H$. If $h-\abs{v_n}\in A$, then $v_n1^{h-\abs{v_n}}$ is an $n$-block with length $h$. If $h-\abs{v_n}\not\in A$, then $h-2\abs{v_n}-a_{n,q_n-1}\in A$ since $h-2\abs{v_n}-a_{n,q_n-1}\geq b_K$. But then the string $v_n1^{a_{n,q_n-1}}v_n1^{h-2\abs{v_n}-a_{n,q_n-1}}$ is an $n$-block and has length $h$.
\end{proof}

In the next two results the largeness of $A$ is relaxed but still enough to guarantee mixing.

\begin{proposition}\label{arithmeticsequencemixing}
Let $(X,T)$ be a rank-one subshift. Suppose that 
\begin{enumerate}
\item $A=\{a_{n,i}: n\in\Nn, 1\leq i<q_n\}$ contains an arithmetic sequence $(ks+t: k\in\Nn)$ for some $s>0$ and $t\geq 0$; and 
\item for every $n$, there are $m_1,\dots, m_s\geq n$ such that $\abs{v_{m_j}}\equiv j\ \mbox{mod}\ s$ for all $1\leq j\leq s$.
\end{enumerate}
Then $(X,T)$ is mixing.
\end{proposition}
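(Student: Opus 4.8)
The plan is to verify condition (iii) of Proposition~\ref{mixingproperty}: for every $n\in\Nn$ there is $H\in\Nn$ such that for each $h\geq H$ there is an $n$-block of length exactly $h$. Fix $n$. By hypothesis (2), choose $m_1,\dots,m_s\geq n$ with $\abs{v_{m_j}}\equiv j\pmod s$; since $\{1,\dots,s\}$ is a complete residue system modulo $s$, the lengths $\abs{v_{m_1}},\dots,\abs{v_{m_s}}$ realize every residue class mod $s$. The building blocks will be words of the form $v_{m_j}1^{a}$ where $a$ is a spacer parameter occurring at some level $\geq m_j$. Such a word is an $n$-block: if $a=a_{m',i}$ with $m'\geq m_j$ and $1\leq i<q_{m'}$, then the pattern $v_{m_j}1^{a}v_{m_j}$ occurs inside $v_{m'+1}$ (hence in $V$) with both copies of $v_{m_j}$ expected, because each copy of $v_{m'}$ flanking the spacer $a_{m',i}$ ends, respectively begins, with an expected occurrence of $v_{m_j}$; therefore $v_{m_j}1^{a}$ begins with an expected occurrence of $v_n$ and is immediately followed by one, and its length is $\abs{v_{m_j}}+a\geq\abs{v_n}$.

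Next I would handle the modular bookkeeping. By hypothesis (1), $A\supseteq\{ks+t:k\in\Nn\}$. For each $j$, only finitely many spacer parameters occur at levels below $m_j$, so all but finitely many of the values $ks+t$ appear as a spacer parameter at a level $\geq m_j$; as these values increase with $k$, there is $K_j$ with the property that $ks+t$ appears at a level $\geq m_j$ for all $k\geq K_j$. Put $K=\max_{1\leq j\leq s}K_j$ (we may also take $K\geq 1$, so that $ks+t\geq s\geq 1$ for $k\geq K$) and set $H=Ks+t+\max_{1\leq j\leq s}\abs{v_{m_j}}$. Given $h\geq H$, let $j\in\{1,\dots,s\}$ be the unique index with $j\equiv h-t\pmod s$; then $\abs{v_{m_j}}\equiv h-t\pmod s$, so $h-\abs{v_{m_j}}\equiv t\pmod s$, and since $h-\abs{v_{m_j}}\geq H-\abs{v_{m_j}}\geq Ks+t$ we may write $h-\abs{v_{m_j}}=ks+t$ for some $k\geq K$. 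By the choice of $K$, $ks+t$ appears as a spacer parameter at some level $\geq m_j$, so by the first paragraph $\alpha=v_{m_j}1^{\,ks+t}$ is an $n$-block with $\abs{\alpha}=\abs{v_{m_j}}+ks+t=h$. This establishes (iii), whence $(X,T)$ is mixing.

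The routine parts are the modular arithmetic and the counting of the finitely many low-level spacers. The only point requiring a bit of care is the verification that $v_{m_j}1^{a}$ is genuinely an $n$-block, i.e.\ that a spacer equal to $a$ really does occur above level $m_j$ and thereby supplies the required expected occurrences of $v_n$ at both ends; this is exactly the place where hypothesis (1) (the arithmetic progression has infinitely many entries, hence entries at arbitrarily high levels) and hypothesis (2) (prefixes of all residues mod $s$, all at level $\geq n$) are combined. I do not anticipate any genuine obstacle beyond this bookkeeping.
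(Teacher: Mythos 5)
Your proof is correct and follows essentially the same route as the paper's: both reduce to Proposition~\ref{mixingproperty}(iii), pick the residue-matching $m_j$ from hypothesis (2), and realize each sufficiently large length $h$ by an $n$-block of the form $v_{m_j}1^{ks+t}$ with the spacer $ks+t$ occurring above level $m_j$. Your version merely spells out more explicitly the choice of the threshold $K$ and the verification that $v_{m_j}1^{a}$ is an $n$-block, which the paper leaves implicit.
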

\begin{proof}
Fix $n\in\Nn$. Let $m_1, \dots, m_s$ be as in assumption (2). Let $M=\max\{ m_j:1\leq j\leq s\}$ and $L=\max\{\abs{v_{m_j}}:1\leq j\leq s\}$. Let $H$ be large enough so that all terms $ks+t>H-L$ occur as $a_{m,i}$ for some $m\geq M$ and $1\leq i<q_m$. Let $h\geq H$. Then $h-t\equiv \abs{v_{m_j}}\ \mbox{mod}\ s$ for some $1\leq j\leq s$. So $h-t=ks+\abs{v_{m_j}}$ for some $k$, so $h=ks+t+\abs{v_{m_j}}$. Since $h\geq H$, $ks+t\in A$ and $ks+t> H-L$, and thus it occurs as $a_{m,i}$ for some $m\geq m_j$ and $1\leq i<q_m$.
Let $\alpha=v_{m_j}1^{ks+t}$. Then $\alpha$ is an $n$-block and $\abs{\alpha}=h$.
\end{proof}

Note that although the conditions in Proposition~\ref{arithmeticsequencemixing} are technical, it is not hard to construct such rank-one generating sequences by diagonalization. The following result is a generalization of Proposition~\ref{arithmeticsequencemixing} with a similar proof. We state it without proof.

\begin{proposition}\label{arithmeticsequencegeneral}
Let $(X,T)$ be a rank-one subshift. Suppose that
\begin{enumerate}
\item $A=\{a_{n,i}: n\in\Nn, 1\leq i<q_n\}$ contains an arithmetic sequence $(ks+t: k\in\Nn)$ for some $s>0$ and $t\geq 0$; and 
\item for every $n$, there are $n$-blocks $w_1,\dots, w_s$ each of which is an end segment of some $v_m$ where $m\geq n$, and for each $1\leq j\leq s$, $\abs{w_j}\equiv j\ \mbox{mod}\ s$. 
\end{enumerate}
Then $(X,T)$ is mixing.
\end{proposition}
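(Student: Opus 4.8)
The plan is to mimic the proof of Proposition~\ref{arithmeticsequencemixing} almost verbatim, replacing the role of the fixed words $v_{m_j}$ by the $n$-blocks $w_j$ supplied by hypothesis (2), and checking that the two structural facts used in that proof still go through: namely that concatenating an $n$-block ending in an expected occurrence of $v_n$ with a spacer block $1^{a}$ produces an $n$-block, and that the length of such a block is $\abs{w_j}+a$. I would apply Proposition~\ref{mixingproperty}(iii), so it suffices to show that for each $n\in\Nn$ there is $H$ such that every $h\geq H$ is the length of some $n$-block.

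First I would fix $n$ and invoke hypothesis (2) to obtain $n$-blocks $w_1,\dots,w_s$ with $w_j$ an end segment of $v_{m_j}$ for some $m_j\geq n$, and $\abs{w_j}\equiv j\pmod s$; set $M=\max_j m_j$ and $L=\max_j\abs{w_j}$. Next, since only finitely many entries of the spacer parameter come from levels below $M$, I would choose $H$ large enough that every term $ks+t>H-L$ of the arithmetic sequence in hypothesis (1) actually occurs as $a_{m,i}$ for some $m\geq M$ and $1\leq i<q_m$. Then, given $h\geq H$, there is a unique $1\leq j\leq s$ with $h-t\equiv\abs{w_j}\pmod s$, so $h-t=ks+\abs{w_j}$ for some $k\in\Nn$ (here one must check $k\geq 0$, which follows from $h\geq H$ and the choice of $H>L+t$), hence $ks+t=h-\abs{w_j}>H-L\geq 0$ and $ks+t\in A$, occurring at some level $m\geq M\geq m_j$. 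The candidate block is $\alpha=w_j1^{ks+t}$: since $w_j$ ends with an expected occurrence of $v_n$ inside $v_{m_j}$ and the spacer $1^{ks+t}=1^{a_{m,i}}$ sits between two expected occurrences of $v_{m}\supseteq v_{m_j}$ (and $m\geq m_j\geq n$ makes the relevant occurrences of $v_n$ expected), $\alpha$ is an $n$-block, with $\abs{\alpha}=\abs{w_j}+ks+t=h$.

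The one point requiring a little care — and the main obstacle relative to Proposition~\ref{arithmeticsequencemixing} — is verifying that $w_j1^{a}$ is genuinely an $n$-block when $w_j$ is only an end segment of $v_{m_j}$ rather than all of $v_{m_j}$. Concretely, one needs that appending $1^{a_{m,i}}$ with $m\geq m_j$ still yields a word all of whose displayed occurrences of $v_n$ are expected occurrences in $V$; this is because $w_j$ is an end segment of $v_{m_j}$, which is an end segment of $v_m$, so $v_m 1^{a_{m,i}}$ occurs in $V$ with $v_m$ expected, hence $w_j 1^{a_{m,i}}$ occurs in $V$ as an end segment of that expected $v_m$ followed by the spacer, and the occurrences of $v_n$ inside $w_j$ are expected in $v_{m_j}$ and therefore in $v_m$ and in $V$ by the coherence of expectedness. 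This verification is routine but is where the statement of hypothesis (2) does its work. Everything else is bookkeeping with residues mod $s$ and the choice of $H$, exactly as in the previous proof, so I would omit the details the same way the authors do.
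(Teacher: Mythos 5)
Your proof is correct and is precisely the argument the paper intends: the authors omit the proof of Proposition~\ref{arithmeticsequencegeneral}, saying only that it is "similar" to that of Proposition~\ref{arithmeticsequencemixing}, and your adaptation — replacing $v_{m_j}$ by the blocks $w_j$, matching residues mod $s$, and choosing $H$ so that the needed terms $ks+t$ occur at levels $\geq M$ — is exactly that similar proof. You also correctly isolate and justify the one genuinely new point, namely that $w_j1^{a_{m,i}}$ is still an $n$-block because $w_j$ sits as an end segment of an expected occurrence of $v_m$ with its internal occurrences of $v_n$ expected by coherence.
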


Again, it is easy to construct rank-one subshifts satisfying these very flexible conditions. Note that in all four propositions above the set $A=\{a_{n,i}: n\in\Nn, 1\leq i<q_n\}$ has positive density. We do not have an example of a mixing rank-one subshift where the set $A$ has density $0$. Also, we do not know if the set $A$ having density $1$ implies mixing for rank-one subshifts.

In the rest of this section we construct two examples of rank-one subshifts which have the same set of spacer parameters but exhibit different mixing properties. In fact, one of them is mixing, while the other one is not even weakly mixing. These examples show that the mixing properties cannot be determined by the set of spacer parameters alone.

\begin{example} Fix an integer $p\geq 2$. Define $v_0=0$ and
$$v_{n+1}=v_nv_n1^pv_n1^{2p}v_n1^{3p}v_n\cdots v_n1^{(\abs{v_n}-1)p}v_n. $$
Denote this rank-one subshift by $X_p$. We will show that $X_p$ is mixing. 

Alternatively, define $u_0=0^p$ and 
$$u_{n+1}=u_nu_n1^pu_n1^{2p}u_n1^{3p}u_n\cdots u_n1^{(\abs{u_n}-1)p}u_n. $$
Denote this rank-one subshift by $Y_p$. Then $Y_p$ is not weakly mixing. In fact, since every term of the spacer parameter sequence is a multiple of $p$, it is easy to show by induction that $\abs{u_n}$ is a multiple of $p$ for all $n\in\Nn$. It follows that the length of any $n$-block is a multiple of $p$, and in particular cannot be arbitrary when sufficiently large.
\end{example}

\begin{lemma} $X_p$ is mixing. \end{lemma}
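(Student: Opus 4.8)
The plan is to verify condition (iii) of Proposition~\ref{mixingproperty}: for each $n\in\Nn$, exhibit $H\in\Nn$ such that every $h\geq H$ is the length of some $n$-block for $X_p$. The key observation is that the spacer set for $X_p$ at levels above $n$ eventually contains \emph{every} multiple $mp$ with $m$ sufficiently large. Indeed, fix $n$; for any $M\geq n$, the level-$(M{+}1)$ construction inserts spacers $1^{jp}$ for $1\leq j\leq \abs{v_M}-1$, so the spacer set above level $n$ contains $\{jp : 1\leq j\leq \abs{v_M}-1\}$ for arbitrarily large $M$, hence contains $\{jp : j\geq j_0\}$ for some $j_0$ depending on $n$. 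Since $v_n1^a$ is an $n$-block whenever $a$ is a spacer at some level $\geq n$ (take the end segment of $v_m1^a$ starting at the last expected $v_n$, as in the staircase example), we get $n$-blocks of every length of the form $\abs{v_n}+jp$ with $j\geq j_0$.

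This only gives lengths in a single residue class mod $p$, so the real work is to realize the other residues. The idea is to concatenate: a word of the form $v_n1^{a}v_n1^{b}$, where both $a$ and $b$ are admissible spacers above level $n$, is again an $n$-block, of length $2\abs{v_n}+a+b$. So I need the \emph{sums} of two large multiples of $p$ — still only multiples of $p$. To break out of the residue class I will use that $\abs{v_n}$ itself need not be divisible by $p$; more precisely I will build $n$-blocks whose length is $k\abs{v_n} + (\text{a large multiple of }p)$ for various $k\geq 1$, by stringing together $k$ copies of $v_n$ separated by large spacers $1^{jp}$. Such a word $v_n1^{j_1 p}v_n1^{j_2 p}\cdots v_n1^{j_{k-1}p}v_n$ is an $n$-block (it sits inside $v_M$ for large $M$ once all the $j_\ell p$ occur as spacers above level $n$), and has length $k\abs{v_n}+p\sum j_\ell$. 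As $k$ ranges over $1,2,\dots,p$ and $j_1$ ranges over all large integers, the quantity $k\abs{v_n}+p j_1$ runs through every residue class mod $\gcd(\abs{v_n},p)\cdot(\text{stuff})$ — and in fact, since $k\abs{v_n}$ for $k=1,\dots,p$ hits every residue class modulo $p/\gcd(\abs{v_n},p)\cdots$; the cleanest route is: the set $\{k\abs{v_n} : 1\leq k\leq p\} + p\Nn$ covers all sufficiently large integers in every residue class that $\{k\abs{v_n}\bmod p : 1\leq k\leq p\}$ meets, and that set is all of $\Zz/p\Zz$ because $\{0,\abs{v_n},2\abs{v_n},\dots\}$ cycles through a subgroup and $k$ up to $p$ suffices. (If $\gcd(\abs{v_n},p)=p$, i.e. $p\mid\abs{v_n}$, one instead uses that $a_{n,1}=p$ is itself a spacer, or passes to a higher level $v_m$ with $p\nmid\abs{v_m}$; I should check whether $\abs{v_n}$ is ever divisible by $p$ in this construction and handle that case separately.)

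So the concrete steps are: (1) fix $n$; (2) choose $M\geq n$ large enough that the spacer set above level $n$ contains $\{jp : j_0\leq j\leq \abs{v_M}\}$ for a suitable $j_0$; (3) for each $k=1,\dots,p$, note $v_n(1^{j_1 p}v_n)\cdots(1^{j_{k-1}p}v_n)$ with each $j_\ell$ chosen in the admissible range is an $n$-block of length $k\abs{v_n} + p(j_1+\dots+j_{k-1})$, using that consecutive-$v_n$-with-large-spacer patterns embed in $v_M$; (4) observe that as $(k,j_1,\dots,j_{k-1})$ vary, these lengths cover a full tail of $\Nn$ — pick the appropriate residue class mod $p$ via $k$, then add the required multiple of $p$ via the $j_\ell$'s — and set $H$ to be where this tail starts; (5) conclude by Proposition~\ref{mixingproperty}. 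The main obstacle I anticipate is step~(3)/(4): verifying that a multi-copy concatenation $v_n1^{j_1p}v_n\cdots1^{j_{k-1}p}v_n$ is genuinely an $n$-block, which requires checking it occurs inside some $v_M$ with all the displayed $v_n$'s expected — this follows because at level $M+1$ (for $M$ large, with $\abs{v_M}>k$) the block $v_M v_M 1^p v_M\cdots$ already exhibits $v_M$'s separated by spacers $1^{jp}$, and $v_n$ is an initial (and terminal) segment of $v_M$; so one locates the desired pattern near the end of one $v_M$-copy and the start of the next, exactly as in the staircase and $\liminf$-gap examples earlier. The arithmetic of covering all residues mod $p$ is routine once that combinatorial embedding is in hand.
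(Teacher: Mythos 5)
Your overall strategy is the paper's: verify Proposition~\ref{mixingproperty}(iii) by producing, for each residue class mod $p$, an $n$-block with a controlled number $k$ of expected occurrences of $v_n$ (which sets the residue, since every spacer is a multiple of $p$) together with one large free spacer $1^{ap}$ to sweep out a tail of that class. But the two steps you defer are exactly where the content lies. First, the words $v_n1^{j_1p}v_n1^{j_2p}\cdots 1^{j_{k-1}p}v_n$ with all $j_\ell$ large and freely chosen are \emph{not} $n$-blocks once $k\geq 3$: in this construction any gap exceeding $(\abs{v_n}-1)p$ in the expected-$v_n$ decomposition of $V$ is a gap at level $\geq n+1$, and the expected $v_n$ following it begins an expected occurrence of some $v_m$ with $m>n$, which starts with $v_nv_n$ --- so the next gap is forced to be $0$, never a second free $j_2p$. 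Your own embedding argument ("end of one $v_M$-copy and start of the next") actually produces only $v_n1^{ap}\cdot(\text{initial segment of }v_{n+1})$, i.e.\ one free spacer followed by copies of $v_n$ with \emph{forced} gaps $0,p,2p,\dots$; this is precisely the word the paper uses, and it still suffices because the forced gaps are multiples of $p$, but your length formula $k\abs{v_n}+p\sum_\ell j_\ell$ must be replaced by $k\abs{v_n}+ap+(\text{a forced multiple of }p)$, and the admissible range of $a$ then has to be re-derived.

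Second, and more seriously, the residue analysis you call "routine" is the crux and cannot be handled by a parenthetical. Since every spacer of $X_p$ is a multiple of $p$, the only source of block lengths outside a single class mod $p$ is $\abs{v_n}$ itself; if $p\mid\abs{v_m}$ for all large $m$, then every $m$-block has length in $p\Zz$ and the system is not even weakly mixing --- this is exactly the mechanism that kills $Y_p$. Your proposed fallbacks do not work: here $a_{n,1}=0$ and all spacers are $\equiv 0\pmod p$, so no spacer can adjust a residue, and the existence of infinitely many $m$ with $\gcd(\abs{v_m},p)=1$ is precisely the statement that must be proved. The paper asserts $\abs{v_n}\equiv 1\pmod p$ "by a straightforward induction," but note that with $q_n=\abs{v_n}+1$ copies of $v_n$ in $v_{n+1}$ the recursion gives $\abs{v_{n+1}}\equiv(\abs{v_n}+1)\abs{v_n}\pmod p$, which does not preserve the class of $1$ (already $\abs{v_1}=2$). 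So the congruence your argument hinges on is genuinely delicate for this construction and must be pinned down explicitly; as written, your proof (like the strategy it shares with the paper) stands or falls on this unverified arithmetic fact.
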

\begin{proof}
A straightforward induction gives that for every $n$, $\abs{v_n}\equiv 1\ \mbox{mod}\ p$. Fix $n\in\Nn$. Let $\abs{v_n}=Np+1$. Let $H=Np^2+p^3$. We show that for all $h\geq H$, there is an $n$-block $\alpha$ with length $h$. For this let $h\geq H$ and $j\equiv h\ \mbox{mod}\ p$ where $1\leq j\leq p$. Write $h=j+kp$ and let 
$$ a=k-jN-\displaystyle\frac{(j-2)(j-1)}{2}. $$
Since $h\geq H$, we have $a\geq 0$. Consider
$$ \alpha=v_n1^{ap}v_nv_n1^pv_n1^{2p}\cdots v_n1^{(j-2)p}. $$
The first expected occurrence of $v_n$ is an end segment of some $v_m$ for appropriate $m\geq n$. Starting from the second expected occurrence of $v_n$ is an initial segment of $v_{n+1}$ that includes $j-1$ many expected occurrences of $v_n$. Then $\alpha$ is an $n$-block with length
$$ j\abs{v_n}+ap+\displaystyle\frac{(j-2)(j-1)}{2}p=jNp+j+kp-jNp=h. $$
\end{proof}

Note that $X_p$ and $Y_p$ have essentially the same cutting and spacer parameters. These examples show that a complete classification of weak mixing or mixing for rank-one subshifts with unbounded spacer parameters would need to be sensitive to changes even at the lowest level.

\end{document}